\newtheorem*{thmA*}{Theorem A}
\newtheorem*{thmB*}{Theorem B}
\newtheorem*{lemA*}{Lemma A}
\newtheorem*{assumptionA*}{Assumption A}
\newtheorem*{assumptionB*}{Assumption B}
\newtheorem*{assumptionC*}{Assumption C}
\newtheorem{Winfree Model}{Winfree Model}
\newcommand{\R}{\mathbb R}
\newcommand{\dist}{\text{\rm dist}}
\def\be{\begin{equation}}
\def\ee{\end{equation}}
\def\R{\mathbb R}
\def\p{\partial}
\def\tilde{\widetilde}
\def\a{\alpha}
\numberwithin{figure}{section}
\def\R{\mathbb{R}}
\def\O{\Omega}
\DeclareMathOperator{\diam}{{\rm diam}}
\newcommand{\opnorm}{\@ifstar\@opnorms\@opnorm}
\newcommand{\@opnorms}[1]{%
  \left|\mkern-1.5mu\left|\mkern-1.5mu\left|
   #1
  \right|\mkern-1.5mu\right|\mkern-1.5mu\right|
}
\newcommand{\@opnorm}[2][]{%
  \mathopen{#1|\mkern-1.5mu#1|\mkern-1.5mu#1|}
  #2
  \mathclose{#1|\mkern-1.5mu#1|\mkern-1.5mu#1|}
}
\begin{document}
\bibliographystyle{siam}

\title[]
{Geometric effects on $W^{1, p}$ regularity of the stationary linearized Boltzmann equation}

\author[I-K.~Chen, C.-H.~Hsia, D.~Kawagoe and J.-K.~Su]{I-KUN CHEN, CHUN-HSIUNG HSIA, DAISUKE KAWAGOE AND JHE-KUAN SU}

\date{\today}

\begin{abstract}
We study the incoming boundary value problem for the stationary linearized Boltzmann equation in bounded convex domains. The geometry of the domain has a dramatic effect on the space of solutions. We prove the existence of solutions in $W^{1,p}$ spaces for $1\leq p<2$ for small domains. In contrast, if we further assume the positivity of the Gaussian curvature on the boundary, we prove the existence of solutions in $W^{1, p}$ spaces for $1 \leq p < 3$ provided that the diameter of the domain is small enough. In both cases, we provide counterexamples in the hard sphere model; a bounded convex domain with a flat boundary for $p = 2$, and a small ball for $p = 3$.
\end{abstract}

\maketitle

\tableofcontents

\section{Introduction} \label{sec:intro}
It has been observed by many authors that the geometry of a domain has significant effects on the regularity of solutions to the Boltzmann equation. It has been observed that, for a non-convex domain, velocity grazing the boundary can enter the interior; while grazing only happens at the boundary for a convex domain. This difference makes a convex domain enjoy better regularity, see \cite{GKTT, GKTT2, Kimdis, KimLee} for time evolutional problems. We focus on the stationary equation on bounded convex domains. Surprisingly, a subtle difference in flatness of the boundary dramatically changes the range of solution spaces. 

We consider the following stationary linearized Boltzmann equation:
\begin{equation}\label{SLBE}
v \cdot \nabla_{x}f(x,v)=L(f)(x,v) \mbox{ for } (x,v) \in \Omega \times \mathbb{R}^{3}
\end{equation}
with the incoming boundary condition: 
\begin{equation}
\label{inbdry}
f(x,v)=g(x,v) \mbox{ for } (x,v) \in \Gamma^{-},
\end{equation}
where $\Gamma^{-}:=\lbrace (x,v)\in \partial \Omega \times \mathbb{R}^{3} \mid n(x)\cdot v<0 \rbrace$ and $n(x)$ is the outward unit normal of $\partial\Omega$ at $x$. Here, $\Omega$ is a bounded convex domain in $\R^3$ with $C^2$ boundary $\partial \Omega$ and $L$ is the linearized collision operator. The existence of solutions of lower regularity has been studied for linearized and nonlinear equations with various boundary conditions, see \cite{GuoKim,Gui 1,Gui 2}. As for regularity issue, based on the above existence results, the pointwise regularity is studied in \cite{RegularChen,CHK,chenH,chenkim}. 

A recent result \cite{I kun 1} studies the regularity in fractional Sobolev spaces. It proves that solutions are in $L^2(\mathbb{R}^3, H^s(\Omega))$ for $0 \leq s < 1$. Furthermore, the $L^2(\mathbb{R}^3, H^1(\Omega))$ regularity cannot be achieved by the formula of Bourgain, Brezis, and Mironescu in \cite{BBM}. It is natural to make a comparison with a famous regularity result \cite{GKTT}, in which the $W^{1,p}$ regularity is proved for the time evolutional full Boltzmann equation with the diffuse reflection boundary condition for $1 \leq p < 2$ and a counterexample for the $H^1$ solution is provided for the free transport equation therein. However, one should be aware that the results on the regularity issue of the evolution problem do not provide satisfactory information concerning the regularity properties of the stationary equation. For example, the Laplace equation can be regarded as the stationary equation of the wave equation while they have very different regularity properties. 

This motivates us the study on whether $H^1$ is the critical function space for regularity of the stationary Boltzmann equation. With the help of the smallness of the domain, in \cite{CCHKS} the authors prove existence of the $H^1$ solution for the stationary linearized Boltzmann equation with the incoming boundary condition for domains with boundary of positive Gaussian curvature, which gives a partial answer. In general, it is interesting to consider $W^{1, p}$ solutions for the Boltzmann equation. There is a study on the asymptotic stability of Boltzmann equation with regularity in space variable \cite{ChenKimGra}, which gives a hint on the range of $p$. In this article, we consider $W^{1, p}$ type solution spaces and classify the the range of the exponent $p$ according to the geometry of the domain.

We assume that the linearized collision operator $L$ satisfies the following assumption.

\begin{assumptionA*}
The operator $L(f)$ can be decomposed as
\[
L(f)(x, v) = - \nu(v) f(x, v) + K(f)(x, v), 
\]
where 
\begin{equation} \label{K}
K(f)(x,v):=\int_{\mathbb{R}^{3}}k(v,v^{*})f(x,v^{*})dv^{*}. 
\end{equation}
Here, we assume that $\nu(v)$ and $k(v, v^*)$ satisfy
\begin{align}
&\nu_{0}(1+|v|)^{\gamma} \leq \nu(v) \leq \nu_{1} (1+|v|)^{\gamma}, \label{AA}\\
&| k(v,v^{*}) | \lesssim \frac{1}{| v-v^{*} |(1+|v| +| v^{*} |)^{1-\gamma}} E_\rho(v, v^*), \label{AB}\\
&| \nabla_{v}k(v,v^{*}) | \lesssim \frac{1+|v|}{| v-v^{*} |^{2}(1+|v| +| v^{*} |)^{1-\gamma}}E_\rho(v, v^*), \label{AC}\\
&|\nabla_{v}\nu(v)|\lesssim(1+|v|)^{\gamma-1} \label{AD},
\end{align}
where $0 < \rho < 1$, $0 \leq \gamma \leq 1$, and
\begin{equation} \label{E}
E_\rho(v, v^*) := e^{-\frac{1-\rho}{4} \left( | v-v^{*} |^{2} + \left( \frac{|v|^{2}-| v^{*} |^{2}}{| v-v^{*} |} \right)^{2} \right)}.
\end{equation}
\end{assumptionA*}
Throughout this article, we adopt the convention $f \lesssim g$ if there exists a positive constant $C$ such that $f \leq Cg$. 

\begin{remark}
If we adopt the idea of Grad \cite{Grad} and consider the Grad angular cut-off potentials which include the hard sphere, hard potential, and Maxwellian molecular condition, then the condition of \eqref{AB} and the upper bound of \eqref{AA} hold. See \cite{Caf 1}. Also, it is worth mentioning that the commonly used cross section $B(|v-v^*|,\theta)= b|v-v^*|^{\gamma} \cos \theta$, where $b$ is a positive constant, leads to all the estimates in Assumption A.
\end{remark}

\begin{remark}
Instead of making use of \eqref{AA}--\eqref{AD} in {\bf Assumption A}, we shall employ the following estimates in the proof of our theorems:
\begin{align}
&\nu_{0} \leq \nu(v) \leq \nu_{1} (1+|v|), \label{AA'}\\
&| k(v,v^{*}) | \lesssim \frac{1}{| v-v^{*} |} E_\rho(v, v^*), \label{AB'}\\
&| \nabla_{v}k(v,v^{*}) | \lesssim \frac{1+|v|}{| v-v^{*} |^{2}}E_\rho(v, v^*), \label{AC'}\\
&|\nabla_{v}\nu(v)|\lesssim 1 \label{AD'}.
\end{align}
We notice that inequalities \eqref{AA'}--\eqref{AD'} can be derived from \eqref{AA}--\eqref{AD} in the case where $0 \leq \gamma \leq 1$.
\end{remark}

For the convenience of further discussion, we define
\begin{align*}
\tau(x, v) &:= \inf\lbrace s > 0 \mid x-sv\in\Omega^{c} \rbrace,\\
q(x,v) &:= x-\tau(x, v)v.
\end{align*}
With this notation, the equation \eqref{SLBE} can be expressed by the following integral form:
\begin{equation} \label{integral form}
\begin{split}
f =& e^{-\nu(v)\tau(x, v)}g(q(x,v),v)+\int_{0}^{\tau(x, v)}e^{-\nu(v)s}Kf(x-sv,v)ds\\
=&Jg+S_{\Omega}Kf,
\end{split}
\end{equation}
where
\begin{align}
Jg(x, v) :=& e^{-\nu(v)\tau(x, v)}g(q(x,v),v), \label{J}\\   
S_{\Omega}h(x, v) :=& \int_{0}^{\tau(x, v)}e^{-\nu(v)s}h(x-sv,v)ds. \label{S}
\end{align}
Notice that we say $f$ is a solution to \eqref{SLBE} with boundary condition \eqref{inbdry} if $f$  satisfies \eqref{integral form}.

In order to state the main result, we introduce the function class of a solution space for the boundary value problem \eqref{SLBE}-\eqref{inbdry}. For $1 \leq p < \infty$ and $\a \geq 0$, let
\[
L^p_\a(\O \times \R^3) := \{ f \mid \| f \|_{L^p_\a(\O \times \R^3)} < \infty \},
\]
where
\[
\| f \|_{L^p_\a(\O \times \R^3)}^p := \int_{\O \times \R^3} |f(x, v)|^p e^{p \a |v|^2}\,dxdv.
\]
Also, for $1 \leq p < \infty$ and $\a \geq 0$, we define the function space $W^{1, p}_\a (\O \times \R^3)$ by
\[
W^{1, p}_\a (\O \times \R^3) := \{ f \mid \| f \|_{W^{1, p}_\a(\O \times \R^3)} < \infty \},
\]
where
\[
\| f \|_{W^{1, p}_\a(\O \times \R^3)} := \| f \|_{L^p_\a(\O \times \R^3)} + \| \nabla_x f \|_{L^p_\a(\O \times \R^3)} + \| \nabla_v f \|_{L^p_\a(\O \times \R^3)}.
\]
Notice that $W^{1, p}_\a(\O \times \R^3)$ with $\a = 0$ is the usual Sobolev space $W^{1, p}(\O \times \R^3)$. 

\begin{theorem} \label{main theorem}
Suppose the linearized collision operator $L$ satisfies {\bf Assumption A}. Let $\O$ be a bounded convex domain with $C^2$ boundary. Then, the following statements hold.

\begin{enumerate}
\item[(i)] For any given $1\leq p<2$ and $0 \leq \alpha < (1 - \rho)/2$, there exists $\epsilon = \epsilon(p, \a) >0$ such that: for any $\O$ with $\diam(\Omega)<\epsilon$, the boundary value problem \eqref{SLBE}-\eqref{inbdry} has a unique solution $f \in W^{1, p}_\alpha(\Omega \times \R^3)$ if and only if $Jg\in W^{1, p}_\a(\Omega \times \mathbb{R}^{3})$. \label{statement1}
 
\item[(ii)] If we further assume that $\partial\Omega$ is of positive Gaussian curvature,  then the range of $p$ in $\mathrm{(i)}$ can be extended to $1\leq p<3$. \label{statement2}
 
\item[(iii)] The conclusions in $\mathrm{(i)}$ and $\mathrm{(ii)}$ are optimal in the sense of Lemma \ref{lem:optimality2} and Lemma \ref{lem:optimality3}. \label{statement3}

\end{enumerate}
\end{theorem}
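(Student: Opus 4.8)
The plan is to recast the boundary value problem \eqref{SLBE}--\eqref{inbdry} as the fixed point equation $(I - S_\Omega K)f = Jg$ coming from the integral form \eqref{integral form}, and to solve it by a contraction/Neumann-series argument in $W^{1,p}_\alpha(\Omega\times\R^3)$. The two preliminary facts I would establish are: (a) the operator $K$ is bounded on the velocity-weighted space, i.e. $\int_{\R^3}|k(v,v^*)|\,e^{\alpha(|v|^2-|v^*|^2)}\,dv^* \lesssim 1$, which is where the threshold $\alpha < (1-\rho)/2$ enters, since it is exactly the range in which the Gaussian factor $E_\rho$ in \eqref{AB'}--\eqref{AC'} dominates the weight; and (b) $S_\Omega K$ maps $W^{1,p}_\alpha$ into itself with operator norm bounded by a quantity tending to $0$ as $\diam(\Omega)\to 0$. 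Granting (a)--(b), for $\diam(\Omega)$ below a threshold $\epsilon(p,\alpha)$ the map $f\mapsto Jg + S_\Omega Kf$ is a contraction, yielding existence and uniqueness of $f\in W^{1,p}_\alpha$ whenever $Jg\in W^{1,p}_\alpha$. The clean iff-statement then follows from the identity $f - Jg = S_\Omega Kf$: since $S_\Omega K$ returns the solution to $W^{1,p}_\alpha$, membership of $f$ in $W^{1,p}_\alpha$ is equivalent to that of $Jg$.

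The heart of the matter is the gradient estimate in (b). Differentiating \eqref{S} with $h = Kf$ produces, besides interior terms involving $\nabla_v k$ and $K\nabla_x f$ — handled by the $L^p_\alpha$ bound for $S_\Omega K$ together with \eqref{AC'}, \eqref{AD'} and the smallness $\tau |v| \le \diam(\Omega)$ — the boundary terms
\[
e^{-\nu(v)\tau}\,Kf(q(x,v),v)\,\nabla_{x}\tau, \qquad e^{-\nu(v)\tau}\,Kf(q(x,v),v)\,\nabla_{v}\tau,
\]
which carry the grazing singularity $|\nabla_x\tau| \sim |n(q)\cdot v|^{-1}$ and $|\nabla_v\tau|\sim \tau\,|n(q)\cdot v|^{-1}$ (obtained by implicit differentiation of the relation $q(x,v)\in\partial\Omega$). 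To estimate their $L^p_\alpha$ norm I would bound $|Kf(q,v)|^p$ by the kernel, then perform the change of variables $x\mapsto (q,s)$ along characteristics, whose Jacobian is exactly $|n(q)\cdot v|$, reducing a typical term to
\[
\int_{\R^3}\int_{\R^3}\int_{\partial\Omega}(\text{kernel in }v,v^*)\,|f(q,v^*)|^p\,|n(q)\cdot v|^{1-p}\,\ell(q,v)\,dS(q)\,dv^*\,dv,
\]
where $\ell(q,v)$ is the chord length. Here $|f(q,v^*)|^p$ is a boundary trace, which I control by the Sobolev trace theorem applied fiberwise in velocity against $\|f\|_{W^{1,p}_\alpha}$.

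The geometry now dictates the admissible range of $p$ through the integrability of the grazing weight in $v$. For a general bounded convex domain one only has $\ell(q,v)\le\diam(\Omega)$, so near the grazing set $n(q)\cdot v = 0$ (codimension one in $v$) the integrand behaves like $|n(q)\cdot v|^{1-p}$, which is locally integrable precisely when $p<2$; this is statement (i). If $\partial\Omega$ has positive Gaussian curvature, strict convexity forces the chord length to vanish at the grazing set at the sharp rate $\ell(q,v)\lesssim |n(q)\cdot v|/|v|$; inserting this gains one power, the integrand becomes $|n(q)\cdot v|^{2-p}$, which is locally integrable exactly when $p<3$, giving statement (ii). I expect the main obstacle to be this endpoint-sharp interplay: one must prove the curvature-dependent chord-length bound uniformly over $\partial\Omega$ and simultaneously keep the singular trace estimate with weight $|n(q)\cdot v|^{1-p}$ (resp. $|n(q)\cdot v|^{2-p}$) under control, since it is the precise power at the grazing set — not merely its finiteness — that separates $p<2$ from $p<3$.

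Finally, statement (iii) is immediate once the optimality lemmas are in place: Lemma \ref{lem:optimality2} exhibits a convex domain with a flat boundary piece for which the $p=2$ ($H^1$) conclusion fails in the hard sphere model, and Lemma \ref{lem:optimality3} does the same for $p=3$ on a small ball, showing that the thresholds $2$ and $3$ cannot be improved.
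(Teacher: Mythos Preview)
Your overall architecture matches the paper's: Picard iteration on the integral form, differentiation of $S_\Omega K$, reduction to boundary traces via the change of variables $x\mapsto(q,s)$, and the grazing-set integrability threshold $|n(q)\cdot v|^{1-p}$ versus $|n(q)\cdot v|^{2-p}$ under positive curvature using the chord bound $\ell(q,v)\lesssim N(q,v)$ of Lemma~\ref{circle lemma}. That geometric analysis is correct and is exactly how the paper separates (i) from (ii).

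The gap is in your claim (b), that the $W^{1,p}_\alpha$ operator norm of $S_\Omega K$ tends to $0$ with $\diam(\Omega)$. It does not, and the paper does not claim it does. Among the interior terms produced by $\nabla_v(S_\Omega K h)$ is
\[
S_\Omega K_v h(x,v)=\int_0^{\tau(x,v)}e^{-\nu(v)s}\int_{\R^3}\nabla_v k(v,v^*)\,h(x-sv,v^*)\,dv^*\,ds,
\]
for which one only obtains $\|S_\Omega K_v h\|_{L^p_\alpha}\lesssim\|h\|_{L^p_\alpha}$ with \emph{no} small prefactor. The ``smallness $\tau|v|\le\diam(\Omega)$'' you invoke would give $\int_0^\tau e^{-\nu s}\,ds\le\diam(\Omega)/|v|$, but the extra $|v|^{-1}$ cannot be absorbed by the $\nabla_v k$ kernel: the combined singularity $|v|^{-1}|v-v^*|^{-2}$ just misses the integrability condition ($\mu_1+2\mu_2<3$ in Corollary~\ref{cor:est_Kdv_alpha*}). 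The paper flags this explicitly after Lemma~\ref{estimate on W1pv for 1 to 2}. Hence $S_\Omega K$ is not a contraction on $W^{1,p}_\alpha$, and a pure fixed-point argument in that space does not close.

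The paper's remedy is a two-norm bootstrap: it proves the weaker estimate \eqref{SB13},
\[
\|(S_\Omega K)^i Jg\|_{W^{1,p}_\alpha}\le\tfrac12\|(S_\Omega K)^{i-1}Jg\|_{W^{1,p}_\alpha}+C_2\|(S_\Omega K)^{i-1}Jg\|_{L^p_\alpha},
\]
by feeding the boundary trace through a parameter-dependent trace inequality (Lemmas~\ref{BLp_est} and~\ref{BL1_est}) so that the coefficient on the gradient is small while the $L^p_\alpha$ coefficient may be large. Summing over $i$ and invoking the genuine $L^p_\alpha$ contraction $\opnorm{S_\Omega K}_{p,\alpha}\lesssim\diam(\Omega)^{1/p}$ (Lemma~\ref{estimate on Lp}) then gives convergence of the Picard series in $W^{1,p}_\alpha$. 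Your outline is missing precisely this decoupling of the $W^{1,p}_\alpha$ and $L^p_\alpha$ estimates.
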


We technically assumed the smallness of the diameter of the domain which is equivalent to the case of large Knudsen number or the case of dilute gas. Thanks to this assumption, we can focus on discussing geometric effect on the $W^{1, p}$ estimate. We also emphasize that, though we put the Gaussian weight $e^{\a |v|^2}$ in the norms in order to describe the shape of solutions more precisely, it does not play an essential role in the $W^{1, p}$ estimate.

Here we briefly sketch the idea of the proof of Theorem \ref{main theorem}. A formal Picard iteration gives the following solution formula for \eqref{integral form}
\begin{equation} \label{Picard}
f=\sum_{i=0}^{\infty}(S_{\Omega}K)^{i}Jg,
\end{equation}
which is a valid solution of \eqref{integral form} if the right hand side of \eqref{Picard} converges in $W^{1, p}_\a(\O \times \R^3)$. 

We first consider convergence of \eqref{Picard} in $L^p_\a(\O \times \R^3)$. We introduce the following lemma.

\begin{lemma}\label{estimate on Lp}
Let $1 \leq p < \infty$ and $0 \leq \alpha < (1 - \rho)/2$, where $\rho$ is the constant in {\bf Assumption A}. Then, for any $h \in L^p_\a(\Omega \times \mathbb{R}^{3})$, we have
\begin{equation}
\| S_{\Omega}Kh \|_{L^p_\a(\Omega \times \mathbb{R}^{3})} \lesssim \diam(\Omega)^{\frac{1}{p}} \| h \|_{L^p_\a(\Omega \times \mathbb{R}^{3})}.
\end{equation}
\end{lemma}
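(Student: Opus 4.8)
The plan is to reduce the claimed bound to a transport estimate along characteristics combined with a velocity-space mapping property of $K$, after first absorbing the Gaussian weight $e^{\a|v|^2}$ into the collision kernel $k$.

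First I would move the weight inside. Writing $H(x,v) := e^{\a|v|^2}|h(x,v)|$ and inserting $e^{\a|v^*|^2}e^{-\a|v^*|^2}$ into \eqref{K}, the quantity $e^{\a|v|^2}|S_\Omega Kh|$ is dominated by the same expression with $k$ replaced by the weighted kernel $\widetilde k(v,v^*) := e^{\a(|v|^2-|v^*|^2)}|k(v,v^*)|$ acting on $H$. Using \eqref{AB'}, the exponent of $e^{\a(|v|^2-|v^*|^2)}E_\rho(v,v^*)$ is, with $a := |v-v^*|$ and $b := (|v|^2-|v^*|^2)/|v-v^*|$,
\[
\a\, ab - \frac{1-\rho}{4}(a^2+b^2),
\]
a quadratic form in $(a,b)$ that is negative definite precisely when $\a < (1-\rho)/2$ (its matrix has determinant $(\tfrac{1-\rho}{4})^2-\tfrac{\a^2}{4}$). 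Hence under our hypothesis on $\a$ there is $\delta=\delta(\a,\rho)>0$ with $e^{\a(|v|^2-|v^*|^2)}E_\rho(v,v^*)\le e^{-\delta(a^2+b^2)}=:\widetilde E(v,v^*)$, a Gaussian of the same shape as $E_\rho$ but with a smaller rate. Consequently $\widetilde k(v,v^*)\lesssim |v-v^*|^{-1}\widetilde E(v,v^*)$, and the whole problem reduces to showing $\|\mathcal T H\|_{L^p(\O\times\R^3)}\lesssim \diam(\O)^{1/p}\|H\|_{L^p(\O\times\R^3)}$ for the positive operator $\mathcal T H(x,v):=\int_0^{\tau(x,v)}e^{-\nu(v)s}\int_{\R^3}\widetilde k(v,v^*)H(x-sv,v^*)\,dv^*\,ds$.

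Next I would separate the transport integration in $s$ from the velocity integration. Applying Hölder in $s$ with exponents $p',p$ to the outer integral and then Fubini in $(x,s)$ (substituting $z=x-sv$, whose image lies in $\O$ by convexity) produces the factor $\bar A(v):=\int_0^{\diam(\O)/|v|}e^{-\nu(v)s}\,ds$, which satisfies both $\bar A(v)\le \diam(\O)/|v|$ (the chord through $\O$ in direction $v$ has length at most $\diam(\O)$, so $\tau(x,v)\le\diam(\O)/|v|$) and $\bar A(v)\le 1/\nu_0$ by \eqref{AA'}. This yields
\[
\|\mathcal T H\|_{L^p}^p \le \int_{\R^3}\bar A(v)^p\, \Big\| \int_{\R^3}\widetilde k(v,v^*)H(\cdot,v^*)\,dv^*\Big\|_{L^p(\O)}^p\,dv.
\]
Estimating $\bar A(v)^p=\bar A(v)\cdot\bar A(v)^{p-1}\le \diam(\O)\,\nu_0^{-(p-1)}|v|^{-1}$ extracts exactly one power of $\diam(\O)$, and Minkowski's integral inequality in $x$ replaces the inner norm by $(\widetilde K\eta)(v):=\int_{\R^3}\widetilde k(v,v^*)\eta(v^*)\,dv^*$, where $\eta(v^*):=\|H(\cdot,v^*)\|_{L^p(\O)}$ obeys $\|\eta\|_{L^p(\R^3)}=\|H\|_{L^p(\O\times\R^3)}=\|h\|_{L^p_\a(\O\times\R^3)}$.

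It then remains to prove the weighted velocity estimate $\int_{\R^3}|v|^{-1}(\widetilde K\eta)(v)^p\,dv\lesssim\|\eta\|_{L^p(\R^3)}^p$, which I expect to be the main obstacle. For $|v|>1$ the weight is harmless and this follows from the ordinary $L^p(\R^3)$-boundedness of $\widetilde K$, obtained from Schur's test and the uniform Grad-type bounds $\sup_v\int_{\R^3}|v-v^*|^{-1}\widetilde E(v,v^*)\,dv^*\lesssim 1$ together with its symmetric counterpart. The delicate region is $|v|\le 1$, where the factor $|v|^{-1}$ (integrable but singular in $\R^3$) must be absorbed; here I would run a weighted Schur test with power weights $|v|^{a},|v^*|^{b}$, choosing the exponents so that the moment integrals $\int |v^*|^{c}|v-v^*|^{-1}\widetilde E\,dv^*$ and $\int|v|^{c}|v-v^*|^{-1}\widetilde E\,dv$ (finite and growing only polynomially in $1+|v|$, $1+|v^*|$ thanks to the Gaussian) close the estimate uniformly. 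The case $p=1$ already displays the mechanism: there the bound reduces to $\sup_{v^*}\int_{\R^3}|v|^{-1}|v-v^*|^{-1}\widetilde E(v,v^*)\,dv<\infty$, where the two integrable singularities at $v=0$ and $v=v^*$ are separated and tamed by the Gaussian decay. Combining this velocity estimate with the previous display gives $\|\mathcal T H\|_{L^p}^p\lesssim \diam(\O)\,\|\eta\|_{L^p}^p$, that is, $\|S_\Omega Kh\|_{L^p_\a(\O\times\R^3)}\lesssim \diam(\O)^{1/p}\|h\|_{L^p_\a(\O\times\R^3)}$, as claimed.
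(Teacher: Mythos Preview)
Your argument is correct and follows the same route as the paper's: H\"older in the $s$-integral, the change of variables of Lemma~\ref{change of variable lemma} to produce the factor $\diam(\Omega)/|v|$, and then a Schur-type velocity estimate to absorb $|v|^{-1}$; your device of absorbing $e^{\alpha|v|^2}$ into $\widetilde k$ up front is equivalent to the paper's choice of the auxiliary exponent $\alpha_1$ satisfying \eqref{alpha1_cond1}--\eqref{alpha1_cond2}. The final step is in fact simpler than you anticipate---splitting $\widetilde k=\widetilde k^{1/p'}\cdot\widetilde k^{1/p}$ with \emph{constant} weight already closes, using $\int\widetilde k(v,v^*)\,dv^*\lesssim 1$ and $\int|v|^{-1}\widetilde k(v,v^*)\,dv\lesssim 1$ (these are Corollary~\ref{cor:est_K_alpha} and Corollary~\ref{cor:est_K_alpha*} applied to the modified kernel), so no power-weighted Schur test is needed.
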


By Lemma \ref{estimate on Lp}, we can prove that $S_{\Omega}K$ is a contraction mapping in $L^p_\a(\O \times \R^3)$ provided the diameter of $\Omega$ is sufficiently small, which leads the $L^p_\a$ convergence of \eqref{Picard}. However, concerning the $W^{1, p}_\a$ convergence of \eqref{Picard}, we do not have a direct analogy of Lemma \ref{estimate on Lp}. Instead, for $1 \leq p < 2$, we have the following estimate.

\begin{lemma} \label{estimate on W1p for 1 to 2} 
Given $h \in W^{1, p}_\a (\Omega \times \mathbb{R}^{3})$ with $1 \leq p < 2$ and $0 \leq \a < (1 - \rho)/2$, where $\rho$ is the constant in {\bf Assumption A}, we have
\begin{align*}
\| S_{\Omega}Kh \|_{W^{1, p}_\a(\Omega \times \mathbb{R}^{3})} \lesssim& \diam(\Omega)^{\frac{1}{p}} \| h \|_{W^{1, p}_\a(\Omega \times \mathbb{R}^{3})} + \| h\|_{L^p_\a(\O \times \R^3)}\\
&+ \diam(\O)^{\frac{1}{p}} \| h \|_{L^p_\a(\partial \O \times \R^3)},
\end{align*}
where $\| h \|_{L^p_\a(\partial\Omega \times \mathbb{R}^{3})}$ is defined as
\[
\| h \|_{L^p_\a(\partial\Omega\times \mathbb{R}^{3})}^p:=\int_{ \partial \Omega \times \mathbb{R}^{3}} |h(z,v)|^p e^{p \a |v|^2}\,d\Sigma(z)dv,
\]
and $d\Sigma$ denotes the surface measure on $\p \O$.
\end{lemma}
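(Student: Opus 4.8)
The plan is to start from $f = S_\Omega K h$ and differentiate the defining integral, keeping separate the derivative that falls on the integrand from the one that falls on the variable upper limit $\tau(x,v)$. Writing $F(x,v,s) := e^{-\nu(v)s}(Kh)(x-sv,v)$, so that $f(x,v) = \int_0^{\tau(x,v)} F(x,v,s)\,ds$, I obtain
\[
\nabla_x f = e^{-\nu(v)\tau}(Kh)(q(x,v),v)\,\nabla_x\tau + \int_0^{\tau} \nabla_x F\,ds, \qquad
\nabla_v f = e^{-\nu(v)\tau}(Kh)(q(x,v),v)\,\nabla_v\tau + \int_0^{\tau} \nabla_v F\,ds.
\]
Since $K$ acts only in the velocity variable, $\nabla_x F = e^{-\nu s}K(\nabla_x h)(x-sv,v)$, while $\nabla_v F$ expands by the chain rule into three pieces: $-s\,\nabla_v\nu(v)\,e^{-\nu s}(Kh)(x-sv,v)$, the term $e^{-\nu s}(\nabla_v K)h(x-sv,v)$ with kernel $\nabla_v k$, and $-s\,e^{-\nu s}K(\nabla_x h)(x-sv,v)$. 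Thus every interior contribution is $S_\Omega$ applied to $K\nabla_x h$, to $(\nabla_v K)h$, or to $Kh$ with a harmless extra factor $s\le\tau$, whereas the two terms carrying $\nabla_{x,v}\tau$ are boundary terms. For a convex $C^2$ domain one has, off the null grazing set $\{n(q)\cdot v=0\}$, $\nabla_x\tau = n(q)/(n(q)\cdot v)$ and $\nabla_v\tau = -\tau\,n(q)/(n(q)\cdot v)$, so both boundary terms carry the grazing factor $|n(q)\cdot v|^{-1}$.

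For the interior terms I would apply Lemma \ref{estimate on Lp} directly to $S_\Omega K(\nabla_x h)$ and to the $Kh$ pieces (the extra $s\le\tau$ is absorbed since $s\,e^{-\nu s}\lesssim e^{-\nu s/2}$), producing the $\diam(\Omega)^{1/p}\|h\|_{W^{1,p}_\a}$ contribution. The term $S_\Omega(\nabla_v K)h$ needs a companion of Lemma \ref{estimate on Lp} for the kernel $\nabla_v k$: by \eqref{AC'} its singularity is $|v-v^*|^{-2}$, still locally integrable in $\R^3$, and $E_\rho$ controls the prefactor $1+|v|$. Conjugating by the weight $e^{\a|v|^2}$ replaces $E_\rho$ by $E_\rho\,e^{\a(|v|^2-|v^*|^2)}$, which remains a Gaussian of the same type precisely because $\a<(1-\rho)/2$; a Schur estimate then bounds $\nabla_v K$ on $L^p_\a(\R^3)$ uniformly in $x$, and this term is controlled by $\diam(\Omega)^{1/p}\|h\|_{L^p_\a}$, in particular by $\|h\|_{L^p_\a(\Omega\times\R^3)}$.

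The boundary terms are the heart of the matter and the source of the restriction $p<2$. For the $\nabla_x\tau$ term I would raise $|e^{-\nu\tau}(Kh)(q,v)|\,|n(q)\cdot v|^{-1}$ to the $p$-th power, multiply by $e^{p\a|v|^2}$, integrate over $x\in\Omega$, and foliate $\Omega$ along the characteristic direction $v$ via $x=q+tv$, whose Jacobian is $dx = |n(q)\cdot v|\,dt\,d\Sigma(q)$. This one power of $|n(q)\cdot v|$ cancels part of the singularity, leaving $|n(q)\cdot v|^{-(p-1)}$, while the integral of $e^{-p\nu(v)t}$ along the chord $\Omega\cap\{q+tv : t>0\}$ contributes a length factor that yields $\diam(\Omega)^{1/p}$ after the $p$-th root. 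Writing $v=r\omega$, the surviving singular weight becomes $|n(q)\cdot\omega|^{-(p-1)}$, and
\[
\int_{\mathbb{S}^2}\frac{d\omega}{|n(q)\cdot\omega|^{p-1}} < \infty \iff p-1<1 \iff p<2 .
\]
Combined with the uniform $L^p_\a(\R^3_v)$-boundedness of $K$ at each boundary point, this gives $\diam(\Omega)^{1/p}\|h\|_{L^p_\a(\partial\Omega\times\R^3)}$. The $\nabla_v\tau$ term carries the extra factor $\tau\le\diam(\Omega)/|v|$ and is strictly milder, controlled by the same mechanism.

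The main obstacle is exactly this grazing analysis: isolating the correct power $|n(q)\cdot v|^{-(p-1)}$ after the characteristic change of variables, verifying its integrability over velocity directions (which forces $p<2$), and matching the $|v|$-weights so that the $K$-smoothing estimate lands on the boundary trace norm $\|h\|_{L^p_\a(\partial\Omega\times\R^3)}$ while carrying the Gaussian weight through $K$. The interior estimates, by contrast, are routine consequences of Lemma \ref{estimate on Lp} and its $\nabla_v k$ analogue.
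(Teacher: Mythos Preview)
Your decomposition and overall strategy match the paper's. Two points need correction.

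First, the claim that $S_\Omega(\nabla_vK)h$ is controlled by $\diam(\Omega)^{1/p}\|h\|_{L^p_\a}$ is false, and this is exactly why the bare term $\|h\|_{L^p_\a(\Omega\times\R^3)}$ appears in the statement. To extract a $\diam(\Omega)$ factor from the $S_\Omega$ part one uses $\int_0^\tau e^{-\nu_0 s}\,ds\le\diam(\Omega)/|v|$, leaving an extra $|v|^{-1}$ to be absorbed by the kernel. For $k$ this works (singularity $|v-v^*|^{-1}$), but for $\nabla_vk$ the singularity is $|v-v^*|^{-2}$ and $\int_{\R^3}|v|^{-1}|\nabla_vk(v,v^*)|\,dv$ diverges (total order $3$ at the diagonal). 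The paper therefore bounds $\int_0^\tau e^{-\nu_0 s}\,ds\lesssim 1$ for this piece and obtains only $\|h\|_{L^p_\a}$, with an explicit remark that the stronger singularity of $\nabla_vk$ is responsible.

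Second, for the boundary terms your argument ``$\int_{S^2}|n\cdot\omega|^{-(p-1)}d\omega<\infty$ combined with $L^p_\a$-boundedness of $K$'' does not close. After the characteristic change of variables and the chord-length bound you face
\[
\int_{\Gamma^-_z}\frac{1}{|v|^p\,N(z,v)^{p-1}}\,|Kh(z,v)|^p e^{p\a|v|^2}\,dv,
\]
and the radial weight $|v|^{-p}$ is coupled to $|Kh|^p$; you cannot separate the $\omega$-integral from the $|v|$-integral, and plain $L^p_\a$-boundedness of $K$ gives no control against $|v|^{-p}$. The paper opens up $K$: a H\"older split on $\int k\,h\,dv^*$ produces one power of $|k|$ against $|h|^p$, Fubini moves the $v$-integral inside, and then with the orthogonal decomposition $v=v_n+v_t$ one bounds
\[
\int_{\Gamma^-_z}\frac{1}{|v|}\,\frac{1}{|v_n|^{p-1}}\,|k(v,v^*)|\,e^{p(\a-\a_1)|v|^2}\,dv
\]
uniformly in $v^*$; the normal integral $\int_0^\infty (r^2+|v_t|^2)^{-1/2}r^{1-p}\,dr$ converges precisely for $1<p<2$, which is where your condition $p<2$ actually enters. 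So the mechanism you identified is right, but it must be implemented as a weighted Schur/H\"older estimate on the kernel $k$, not as a black-box $L^p$ bound on $K$.
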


To control the boundary integral term in the estimate in Lemma \ref{estimate on W1p for 1 to 2}, we introduce following trace inequalities. 

\begin{lemma}\label{BLp_est}
Let $\O$ be a bounded domain with Lipschitz boundary. Also, let $1 \leq p < \infty$ and $\a \geq 0$. Then, there exists a positive constant $C_1(\O)$ such that
\[
\| h \|_{L^p_\a(\partial \Omega \times \mathbb{R}^{3})} 
\leq C_1(\O) \left( \delta^{\frac{p - 1}{p}} \| \nabla_x h \|_{L^p_\a(\Omega \times \mathbb{R}^{3})} + \delta^{-\frac{1}{p}}\| h \|_{L^p_a(\O \times \R^3)} \right)
\]
for all $h \in W^{1, p}_\a(\O \times \R^3)$ and $0 < \delta < 1$. 
\end{lemma}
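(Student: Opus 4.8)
The plan is to prove the inequality first for a fixed velocity $v$, treating $x \mapsto h(x, v)$ as a function on $\O \subset \R^3$, and then to recover the weighted statement by multiplying through by the $v$-dependent weight $e^{p \a |v|^2}$ and integrating in $v$; since every constant produced below is independent of $v$, the weight and the $v$-integration merely come along for the ride via Fubini's theorem. Thus it suffices to establish, for $u \in W^{1, p}(\O)$ and $0 < \delta < 1$, the scalar trace inequality
\[
\| u \|_{L^p(\partial \O)}^p \leq C \left( \delta^{-1} \| u \|_{L^p(\O)}^p + \delta^{p - 1} \| \nabla u \|_{L^p(\O)}^p \right),
\]
since taking $p$-th roots and using subadditivity of $t \mapsto t^{1/p}$ yields exactly the asserted bound with the powers $\delta^{-1/p}$ and $\delta^{(p-1)/p}$.

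To prove the scalar inequality I would use the divergence-theorem (Gagliardo--Ne\v{c}as) method. The geometric input is a vector field $\xi \in C^1(\overline{\O}; \R^3)$ with $\xi \cdot n \geq c_0 > 0$ on $\partial \O$, where $n$ is the outward unit normal; for a bounded Lipschitz domain such a field exists by the standard local-graph-plus-partition-of-unity construction, and when $\O$ is convex one may simply take $\xi(x) = x - x_0$ for any interior point $x_0$, since convexity forces $(x - x_0) \cdot n(x) > 0$. Applying the divergence theorem to $|u|^p \xi$ gives
\[
\int_{\partial \O} |u|^p (\xi \cdot n) \, d\Sigma = \int_\O |u|^p \, \divv \xi \, dx + \int_\O \xi \cdot \nabla (|u|^p) \, dx,
\]
first for $u \in C^1(\overline{\O})$ and then for general $u \in W^{1, p}(\O)$ by density of $C^\infty(\overline{\O})$ (valid on Lipschitz domains) together with continuity of the trace operator. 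Bounding $\xi \cdot n \geq c_0$ on the left and using $|\nabla (|u|^p)| \leq p |u|^{p - 1} |\nabla u|$ on the right, I obtain
\[
c_0 \int_{\partial \O} |u|^p \, d\Sigma \leq \| \divv \xi \|_{\infty} \int_\O |u|^p \, dx + p \| \xi \|_{\infty} \int_\O |u|^{p - 1} |\nabla u| \, dx.
\]

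The remaining step is to absorb the cross term by Young's inequality with a parameter tuned to produce the two powers of $\delta$. With conjugate exponents $p/(p-1)$ and $p$, writing $|u|^{p-1} |\nabla u| = (\eta |u|^{p-1})(\eta^{-1} |\nabla u|)$ and choosing $\eta = \delta^{-(p-1)/p}$ gives a $|u|^p$-coefficient proportional to $\eta^{p/(p-1)} = \delta^{-1}$ and a $|\nabla u|^p$-coefficient proportional to $\eta^{-p} = \delta^{p-1}$; combining this with the $\int_\O |u|^p$ term already present (and using $\delta^{-1} \geq 1$ for $0 < \delta < 1$ to absorb it) yields the scalar inequality, with $C_1(\O)$ depending only on $c_0$, $\| \xi \|_\infty$, and $\| \divv \xi \|_\infty$. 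The case $p = 1$ is even simpler, as the cross term is just $\int_\O |\nabla u|$ and no Young's inequality is needed.

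I expect the only genuinely delicate points to be the existence of the field $\xi$ on a general Lipschitz domain (routine but geometric) and the justification of $|\nabla(|u|^p)| \leq p |u|^{p-1} |\nabla u|$ for $1 \le p < 2$, where $|u|^{p-1}$ is singular at the zeros of $u$; this is harmless because the bound holds pointwise almost everywhere for $C^1$ functions and survives the density passage, so no additional regularization is required.
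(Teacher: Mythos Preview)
Your proof is correct and follows the same overall architecture as the paper: reduce to the scalar trace inequality
\[
\| u \|_{L^p(\partial \O)} \leq C \left( \delta^{\frac{p-1}{p}} \| \nabla_x u \|_{L^p(\O)} + \delta^{-\frac{1}{p}} \| u \|_{L^p(\O)} \right)
\]
for $u \in W^{1,p}(\O)$, then apply it to $h(\cdot, v)$ for a.e.\ $v$, raise to the $p$-th power, multiply by $e^{p\alpha|v|^2}$, and integrate in $v$. The paper simply \emph{cites} this scalar inequality from Grisvard's book, whereas you supply a self-contained proof via the divergence theorem with a transversal vector field and Young's inequality with parameter; your argument is essentially the standard proof of the result the paper quotes, so the two approaches coincide in substance.
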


\begin{lemma} \label{BL1_est}
Let $\O$ be a bounded domain with $C^2$ boundary, and let $\a \geq 0$. Then, for any $\delta>0$, there exists a positive constant $C_\delta(\O)$ such that
\[
\| h \|_{L^1_\a(\p \O \times \R^3)} \leq (1 + \delta) \| \nabla_x h \|_{L^1_\a(\O \times \R^3)} + C_\delta(\O) \| h \|_{L^1_\a(\O \times \R^3)}
\]
for all $h \in W^{1, 1}_\a(\O \times \R^3)$.
\end{lemma}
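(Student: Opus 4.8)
The plan is to reduce the weighted phase-space trace inequality to a classical $v$-free trace inequality in the $x$-variable and then prove the latter sharply via the Gauss--Green (divergence) theorem. First I would strip off the Gaussian weight: since the trace is taken only on $\p\O$ in $x$ while $e^{\a|v|^2}$ depends on $v$ alone, for a.e.\ fixed $v$ the slice $h(\cdot,v)$ lies in $W^{1,1}(\O)$, so it suffices to establish
\[
\int_{\p\O}|u|\,d\Sigma \le (1+\delta)\int_\O|\nabla u|\,dx + C_\delta(\O)\int_\O|u|\,dx
\]
for all $u\in W^{1,1}(\O)$ with a constant independent of $v$; multiplying by $e^{\a|v|^2}$ and integrating $dv$ then recovers the claim. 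By density of $C^1(\overline{\O})$ in $W^{1,1}(\O)$ I may assume $u\in C^1(\overline{\O})$, and I will use $|\nabla|u||\le|\nabla u|$ a.e.

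The core step is the choice of an admissible vector field. Because $\p\O$ is $C^2$, the signed distance function $\sigma$ (negative inside $\O$) is $C^2$ in a one-sided collar $\{-d_0<\sigma\le0\}$ with $|\nabla\sigma|\equiv1$ and $\nabla\sigma=n$ on $\p\O$. Choosing a cutoff $\chi\in C^1$ with $\chi(0)=1$, $\chi\equiv0$ for $\sigma\le -d_0$, and $0\le\chi\le1$, I set $\phi:=\chi(\sigma)\nabla\sigma\in C^1(\overline{\O};\R^3)$. Then $\phi\cdot n=1$ on $\p\O$ while $|\phi|=\chi\le1$ throughout $\O$. Applying Gauss--Green to the $W^{1,1}$ field $|u|\phi$ gives
\[
\int_{\p\O}|u|\,d\Sigma=\int_{\p\O}|u|\,(\phi\cdot n)\,d\Sigma=\int_\O\big(\nabla|u|\cdot\phi+|u|\,\divv\phi\big)\,dx,
\]
whence $\int_{\p\O}|u|\,d\Sigma\le\|\phi\|_{L^\infty}\int_\O|\nabla u|\,dx+\|\divv\phi\|_{L^\infty}\int_\O|u|\,dx$. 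Since $\|\phi\|_{L^\infty}\le1$, the gradient coefficient is in fact $1\le1+\delta$, and one may take $C_\delta(\O)=\|\divv\phi\|_{L^\infty}$, a finite constant determined by $\O$ through $\|\chi'\|_\infty$ and the mean curvature $\sup|\Delta\sigma|$ of $\p\O$.

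I expect the only delicate point to be the construction of $\phi$ with the unit bound $|\phi|\le1$: this is exactly where the $C^2$ hypothesis enters, via smoothness of $\sigma$ and the tubular neighborhood theorem, which force $|\nabla\sigma|\equiv1$ near $\p\O$. For a merely Lipschitz boundary one can only extend $n$ to a bounded field of norm exceeding $1$, which is why Lemma~\ref{BLp_est} carries a nonsharp constant whereas here the $C^2$ regularity yields the near-optimal factor $1+\delta$. As an alternative that manifests the stated $\delta$--$C_\delta$ trade-off directly, I could instead foliate the collar by the level sets of $\sigma$, integrate the fundamental theorem of calculus along inward normals, and average over a layer of width $\e$; the $C^2$ Jacobian estimate $J=1+O(\e)$ then produces gradient coefficient $1+O(\e)=1+\delta$ together with $L^1$ coefficient $\sim1/\e=C_\delta(\O)$.
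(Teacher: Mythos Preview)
Your reduction to the $v$-free trace inequality by slicing in $v$, multiplying by $e^{\a|v|^2}$, and integrating is exactly what the paper does. The paper, however, does not prove the resulting inequality
\[
\int_{\p\O}|u|\,d\Sigma \le (1+\delta)\int_\O|\nabla u|\,dx + C_\delta(\O)\int_\O|u|\,dx,\qquad u\in W^{1,1}(\O),
\]
but simply quotes it from Saintier \cite{Saintier}. You instead supply a self-contained proof via the divergence theorem applied to $|u|\,\phi$ with $\phi=\chi(\sigma)\nabla\sigma$ built from the signed distance. Your argument is correct and in fact yields the sharper coefficient $1$ (not merely $1+\delta$) in front of the gradient term, since $|\phi|=\chi(\sigma)|\nabla\sigma|\le1$ everywhere; so $C_\delta$ can be taken independent of $\delta$. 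The trade-off is clear: the paper's route is a one-line citation, while yours is elementary, transparent about where the $C^2$ hypothesis is used (regularity of $\sigma$ in the collar, boundedness of $\Delta\sigma$), and slightly stronger. Your alternative foliation argument is also sound and would reproduce the explicit $\delta$--$C_\delta$ trade-off as stated.
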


For fixed $1 \leq p < 2$ and $0 \leq \a < (1 - \rho)/2$, combining Lemma \ref{estimate on W1p for 1 to 2} with Lemma \ref{BLp_est} and Lemma \ref{BL1_est}, and taking $\delta$ and $\diam(\O)$ sufficiently small, we have
\begin{equation} \label{SB13}
\begin{split}
\| (S_{\Omega}K)^i Jg \|_{W^{1, p}_\a(\Omega \times \mathbb{R}^{3})} \leq& \frac{1}{2} \| (S_{\Omega}K)^{i-1} Jg \|_{W^{1, p}_\a(\Omega \times \mathbb{R}^{3})}\\
&+ C_2 \| (S_{\Omega}K)^{i-1} Jg\|_{L^p_\a(\O \times \R^3)},
\end{split}
\end{equation}
where $C_2 = C_2(\O, p, \a)$ is some positive constant depending on $\Omega$, $p$ and $\a$. Taking Lemma \ref{estimate on Lp} into consideration, the summation of \eqref{SB13} shows the convergence of \eqref{Picard} in $W^{1, p}_\a(\O \times \R^3
)$. This is the strategy of the proof of the statement (i) in Theorem \ref{main theorem}. 

For the case $2 \leq p < 3$, we need to use a good property of positive Gaussian curvature. We recall the following estimate from \cite{I kun 1}.

\begin{lemma}[Proposition 5.9 in \cite{I kun 1}] \label{circle lemma}
Let $\O$ be a $C^2$ bounded convex domain of positive Gaussian curvature. Then, there exists a positive constant $C_3(\O)$ depending only on $\Omega$ such that for any $(z, v) \in \Gamma^-$ we have 
\[
| z-q(z,-v) | \leq C_3(\O)N(z,v),
\]
where
\[
N(z, v) := |n(z) \cdot \hat{v}|, \quad \hat{v} := \frac{v}{|v|}.
\]
\end{lemma}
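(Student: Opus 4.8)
The plan is to reduce the claimed inequality to a uniform \emph{quantitative convexity} estimate for $\partial\Omega$, and then to extract that estimate from the positivity of the Gaussian curvature together with the $C^2$ regularity and compactness of the boundary. First I would record the geometric content of the two sides. For $(z,v)\in\Gamma^-$ the direction $\hat v$ points into $\Omega$, so the open segment $\{z+sv : 0<s<\tau(z,-v)\}$ lies in $\Omega$, and its far endpoint $w:=q(z,-v)=z+\tau(z,-v)\,v$ lies on $\partial\Omega$. Writing $\ell:=|z-q(z,-v)|=|z-w|$, we have $w-z=\ell\,\hat v$, and since $n(z)\cdot v<0$,
\[
\ell\, N(z,v)=-\ell\,n(z)\cdot\hat v=-\,n(z)\cdot(w-z).
\]
Hence the lemma is equivalent to the lower bound $-\,n(z)\cdot(w-z)\ge c\,\ell^{2}$ with $c=1/C_3(\Omega)$: indeed $\ell N=-n(z)\cdot(w-z)\ge c\,\ell^{2}$ yields $N\ge c\,\ell$, i.e.\ $\ell\le N/c$.

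The core of the argument is therefore the uniform estimate
\[
-\,n(z)\cdot(y-z)\ \ge\ c\,|y-z|^{2}\qquad\text{for all }z,y\in\partial\Omega,
\]
with $c>0$ depending only on $\Omega$; applying it with $y=w$ finishes the proof, since $-n(z)\cdot(y-z)$ is nonnegative for every $y\in\overline\Omega$ by the supporting hyperplane property of the outward normal. I would prove this in two regimes. Near the diagonal I fix $z$, use $n(z)$ as an axis, and write $\partial\Omega$ locally as a graph over the tangent plane at $z$; by Taylor's theorem the depth of $y$ below the supporting hyperplane equals $\tfrac12$ of the second fundamental form evaluated on the tangential part of $y-z$, plus a $C^2$ remainder, so $-n(z)\cdot(y-z)\ge\tfrac12\kappa_{\min}(z)\,|y-z|^{2}(1+o(1))$, where $\kappa_{\min}(z)$ is the smallest principal curvature at $z$.

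Here is the only place the hypothesis genuinely enters: positive Gaussian curvature forces $\kappa_{\min}$ to be uniformly positive. The Gaussian curvature $K=\kappa_{\min}\kappa_{\max}$ is continuous and strictly positive on the compact set $\partial\Omega$, hence bounded below by some $\kappa_0>0$, while $C^2$ regularity and compactness bound $\kappa_{\max}$ above by some $\Lambda<\infty$, so $\kappa_{\min}\ge\kappa_0/\Lambda>0$ uniformly. Compactness of $\partial\Omega$ also makes the Taylor remainder uniform, so there are a uniform constant and a uniform radius $\delta_0>0$ such that the quadratic bound holds whenever $|y-z|<\delta_0$. Away from the diagonal, on the compact set $\{(z,y)\in\partial\Omega\times\partial\Omega : |y-z|\ge\delta_0\}$ the quotient $-n(z)\cdot(y-z)/|y-z|^{2}$ is continuous and strictly positive, the strict positivity coming from strict convexity of $\Omega$ (a consequence of positive Gaussian curvature: the supporting hyperplane at $z$ meets $\overline\Omega$ only at $z$, so $-n(z)\cdot(y-z)>0$ for $y\ne z$). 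A continuous positive function on a compact set attains a positive minimum, and combining the two regimes gives the uniform $c$.

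The main obstacle is precisely this passage to a \emph{uniform} constant. Pointwise strict convexity is immediate, but what is needed is a quadratic lower bound that is uniform in both $z$ and $y$; the delicate regime is the near-diagonal one, where uniformity rests on the uniform lower bound for $\kappa_{\min}$ just derived and on the uniform smallness of the $C^2$ Taylor remainder. Once these two uniformities are in place, the far-diagonal regime is settled by a routine compactness argument, and the reduction in the first paragraph converts the resulting estimate directly into the stated bound $|z-q(z,-v)|\le C_3(\Omega)\,N(z,v)$.
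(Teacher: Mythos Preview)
Your argument is correct. Note, however, that the paper does not actually prove this lemma: it is quoted verbatim as Proposition~5.9 of \cite{I kun 1} and invoked without proof, so there is no ``paper's own proof'' to compare against here.

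On its own merits your proof is sound. The reduction $\ell\,N(z,v)=-n(z)\cdot(w-z)$ is exactly right, and the resulting target estimate $-n(z)\cdot(y-z)\ge c\,|y-z|^2$ for all $y,z\in\partial\Omega$ is a standard uniform-convexity statement. Your near-diagonal step is the heart of the matter and is handled correctly: the key observation that positive Gaussian curvature on a compact $C^2$ convex surface forces a uniform lower bound on $\kappa_{\min}$ (via $\kappa_{\min}\ge K/\kappa_{\max}\ge\kappa_0/\Lambda$) is precisely what is needed, and the uniform Taylor remainder follows from compactness and $C^2$ regularity as you say. The far-diagonal step is routine once one knows $\Omega$ is strictly convex, which you correctly deduce from positive Gaussian curvature. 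One minor point worth making explicit in a final write-up: in the near-diagonal graph representation $y=z+p-F(p)\,n(z)$, the second fundamental form controls $F(p)$ in terms of the \emph{tangential} length $|p|$, not directly $|y-z|$; you should spell out that $|y-z|^2=|p|^2+F(p)^2=|p|^2(1+O(|p|^2))$, so the two are comparable for small $|p|$ and the quadratic bound transfers. This is implicit in your $(1+o(1))$ but deserves a sentence.
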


By Lemma \ref{circle lemma}, we have the following estimate.

\begin{lemma} \label{estimate on W1p for 2 to 3} 
Let $\O$ be a $C^2$ bounded convex domain of positive Gaussian curvature, and let $C_3(\O)$ be a constant defined in Lemma \ref{circle lemma}. Then, given $h \in W^{1, p}_\a (\Omega \times \mathbb{R}^{3})$ with $2 \leq p < 3$ and $0 \leq \a < (1 - \rho)/2$, where $\rho$ is the constant in {\bf Assumption A}, we have
\begin{align*}
\| S_{\Omega}Kh \|_{W^{1, p}_\a(\Omega \times \mathbb{R}^{3})} \lesssim& \diam(\Omega)^{\frac{1}{p}} \| h \|_{W^{1, p}_\a(\Omega \times \mathbb{R}^{3})} + \| h\|_{L^p_\a(\O \times \R^3)}\\
&+ C_3(\O)^{\frac{1}{p}} \| h \|_{L^p_\a(\partial\Omega \times \mathbb{R}^{3})}.
\end{align*}
\end{lemma}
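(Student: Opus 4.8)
The plan is to mirror the proof of Lemma \ref{estimate on W1p for 1 to 2}, the single essential change being that the boundary contribution is now tamed by Lemma \ref{circle lemma} rather than by the crude bound on the chord length by $\diam(\O)$. The $L^p_\a$-part of $\| S_\Omega K h \|_{W^{1,p}_\a}$ is immediately supplied by Lemma \ref{estimate on Lp}, so it remains to estimate $\nabla_x S_\Omega K h$ and $\nabla_v S_\Omega K h$. Differentiating \eqref{S} with $h$ replaced by $Kh$, each derivative splits into a \emph{bulk} part, where the derivative falls on the integrand, and a \emph{boundary} part, coming from differentiating the upper limit $\tau$. For the bulk parts I would argue exactly as in Lemma \ref{estimate on W1p for 1 to 2}: $\nabla_x$ commutes with $K$ and yields $S_\Omega K(\nabla_x h)$, while $\nabla_v$ produces, via \eqref{AD'} and \eqref{AC'}, an $S_\Omega$-type operator acting on $Kh$ (with an extra factor $s\le\tau$), an operator with kernel $\nabla_v k$ (whose $|v-v^*|^{-2}$ singularity is still integrable in $\R^3$), and the term $-s\,S_\Omega K(\nabla_x h)$ from the $v$-dependence of the argument $x-sv$; all are controlled by $\diam(\O)^{1/p}\|h\|_{W^{1,p}_\a} + \|h\|_{L^p_\a}$ through Lemma \ref{estimate on Lp} and the mapping properties of $K$.

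The boundary parts are the crux. From $q(x,v)=x-\tau v\in\partial\O$ one gets $\nabla_x\tau = n(q)/(n(q)\cdot v)$ and $\nabla_v\tau = -\tau\,n(q)/(n(q)\cdot v)$, so both carry the singular factor $|n(q)\cdot v|^{-1}=(|v|N(q,v))^{-1}$, the $\nabla_v$ term carrying the additional factor $\tau=s/|v|$. For fixed $v$ I would change variables $x\mapsto(z,s)$ with $z=q(x,v)$ and $x=z+s\hat{v}$, under which $dx=N(z,v)\,ds\,d\Sigma(z)$ and $s$ ranges over $(0,L)$ with $L=|z-q(z,-v)|$ the chord length. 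After the $s$-integral this turns $\int_\O|\text{boundary term}|^p\,dx$ into a boundary integral: for $\nabla_x$ one uses $\int_0^L e^{-p\nu s/|v|}\,ds\le L$, while for $\nabla_v$ one extracts one factor $s\le L$ and uses $\int_0^L s^{p-1}e^{-p\nu s/|v|}\,ds\lesssim|v|^p$. In both cases Lemma \ref{circle lemma} gives $L\le C_3(\O)N(z,v)$, cancelling exactly one power of $N$, and both boundary parts reduce to the common expression
\[
C_3(\O)\int_{\R^3}\int_{\partial\O}\frac{|Kh(z,v)|^p}{|v|^p\,N(z,v)^{p-2}}\,e^{p\a|v|^2}\,d\Sigma(z)\,dv.
\]

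It remains to bound this by $C_3(\O)\,\|h\|_{L^p_\a(\partial\O\times\R^3)}^p$, i.e. a velocity-space operator estimate, uniform in $z\in\partial\O$, for the weight $w(v)=|v|^{-p}N(z,v)^{-(p-2)}$. I would first absorb the Gaussian weight into the kernel, writing $\tilde k(v,v^*)=e^{\a(|v|^2-|v^*|^2)}k(v,v^*)$ and $\tilde h=e^{\a|v^*|^2}h$; completing the square via the term $((|v|^2-|v^*|^2)/|v-v^*|)^2$ in \eqref{E} shows that the hypothesis $\a<(1-\rho)/2$ yields $|\tilde k|\lesssim|v-v^*|^{-1}E_{\rho'}$ for some $\rho<\rho'<1$, reducing matters to the boundedness on $L^p(dv)$ of $\tilde h\mapsto w^{1/p}\int\tilde k\,\tilde h\,dv^*$. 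Applying Jensen's inequality to the probability measure $|\tilde k|\,dv^*/\tilde\kappa$, with $\tilde\kappa(v)=\int|\tilde k|\,dv^*\lesssim(1+|v|)^{-1}$, this follows once $\int_{\R^3} w(v)\,\tilde\kappa(v)^{p-1}|\tilde k(v,v^*)|\,dv\lesssim1$ uniformly in $v^*$; the Gaussian decay of $E_{\rho'}$ secures convergence at infinity and near $v=v^*$, and in spherical coordinates the behaviour at the velocity origin is governed by $\int_0 r^{2-p}\,dr$ and $\int_{\S^2}N^{-(p-2)}\,d\omega$, both convergent \emph{precisely} when $p<3$.

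I expect the main obstacle to be the $\nabla_v$ boundary term. Its extra factor $\tau=s/|v|$ contributes a dangerous $|v|^{-p}$ near the velocity origin, and it is only by spending one factor $s\le L\le C_3(\O)N$ on the curvature bound (rather than on the $s$-moment) that the residual moment $\int_0^L s^{p-1}e^{-p\nu s/|v|}\,ds\lesssim|v|^p$ cancels it; simultaneously the very same power of $N$ gained from Lemma \ref{circle lemma} lowers the angular singularity from the non-integrable $N^{-(p-1)}$ (which forced $p<2$ in Lemma \ref{estimate on W1p for 1 to 2}) down to the integrable $N^{-(p-2)}$. Thus the positivity of the Gaussian curvature, entering solely through Lemma \ref{circle lemma}, is exactly what extends the admissible range to $p<3$.
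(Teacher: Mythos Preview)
Your overall architecture is the paper's: split $S_\O K h$ into bulk terms (handled by Lemma~\ref{estimate on Lp} and the kernel bounds) and a boundary term carrying the factor $(|v|N_-)^{-1}$, then use Lemma~\ref{circle lemma} to trade one power of the chord length $L$ for one power of $N$, arriving at
\[
C_3(\O)\int_{\partial\O}\int_{\Gamma^-_z}\frac{|Kh(z,v)|^p}{|v|^p\,N(z,v)^{p-2}}\,e^{p\a|v|^2}\,dv\,d\Sigma(z).
\]
Your reduction of the $\nabla_v$ boundary term to this same expression (extracting $s\le L$ once and bounding the remaining moment by $|v|^p$) is a valid variant of the paper's route, which instead absorbs $\tau^p$ into $e^{-p\nu\tau/2}$ and falls back on the $\nabla_x$ computation.

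The gap is in your final velocity-space estimate. Your Jensen/Schur scheme requires
\[
\sup_{v^*}\int_{\R^3}\frac{(1+|v|)^{-(p-1)}}{|v|^p\,N(z,v)^{p-2}}\,|\tilde k(v,v^*)|\,dv<\infty,
\]
and you justify this by treating the singularities at $v=0$, $v=v^*$ and $v=\infty$ as separate. But they are not uniformly separate: as $v^*\to 0$ the kernel singularity $|v-v^*|^{-1}$ merges with the weight singularity $|v|^{-p}N^{-(p-2)}=|v|^{-2}|v_n|^{-(p-2)}$, and since $(1+|v|)^{-(p-1)}\approx 1$ near the origin, the integrand behaves like $|v|^{-(p+1)}N^{-(p-2)}$. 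In spherical coordinates this gives $\int_0 r^{1-p}\,dr$, divergent for every $p\ge 2$. So the Schur bound is \emph{not} uniform in $v^*$, and the argument breaks precisely on the range $2\le p<3$ you want.

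The paper avoids this by shifting the H\"older split so that less of $k$ sits in the Schur part. For $2<p<3$ one uses $|Kh|^p\le\big(\int|k|^{p'}e^{-p'\a|v^*|^2}\,dv^*\big)^{p/p'}\int|h|^p e^{p\a|v^*|^2}\,dv^*$; since $p'<2$, Corollary~\ref{cor:est_K_alpha} bounds the first factor by $(1+|v|)^{-(p-1)}e^{-p\a|v|^2}$, and the remaining $v$-integral $\int|v|^{-p}N^{-(p-2)}(1+|v|)^{-(p-1)}\,dv$ is finite exactly when $p<3$, with no $v^*$-uniformity needed. This fails at $p=2$ (the $r$-integral diverges at infinity), so there a separate splitting $|k|=|k|^{3/4}|k|^{1/4}$ is used, leading via Corollary~\ref{cor:est_K_alpha*} (with $\mu_1=2$, $\mu_2=\tfrac12$, $\mu_1+\mu_2<3$) to a Schur bound whose origin singularity is only $|v|^{-5/2}$ and hence integrable. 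Replacing your Jensen step by either of these H\"older splittings closes the proof.
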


For fixed $2 \leq p < 3$ and $0 \leq \a < (1 - \rho)/2$, combining Lemma \ref{estimate on W1p for 2 to 3} with Lemma \ref{BLp_est} and taking $\delta$ in Lemma \ref{BLp_est} and $\diam(\O)$ sufficiently small, we have \eqref{SB13} with some positive constant $C_2$ depending on $\Omega$, $p$ and $\a$. From \eqref{SB13} with Lemma \ref{estimate on Lp}, we see the convergence of the series \eqref{Picard} in $W^{1, p}_\a(\O \times \R^3)$. This is the proof of the statement (ii) in Theorem \ref{main theorem}.

Precise statements for (iii) in Theorem \ref{main theorem} are as follows.

\begin{lemma} \label{lem:optimality2}
We consider the hard sphere case $\gamma=1$. For fixed $1 \leq p < 2$ and $0 \leq \a < (1 - \rho)/2$, there exist a bounded convex domain $\Omega$ and a boundary data $g$ such that the boundary value problem \eqref{SLBE}--\eqref{inbdry} has a solution in $L^2(\O \times \R^3) \cap W^{1, p}_\a(\O \times \R^3)$ but this solution does not belong to $W^{1, 2}_\a(\O \times \R^3)$.
\end{lemma}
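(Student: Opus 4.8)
The plan is to locate the irregularity already in the leading transport term $Jg$, and then to prove that the collision correction $S_\Omega Kf = f - Jg$ cannot cancel it. The construction is as follows. I first fix a $C^2$ bounded convex body $\Omega_0$ whose boundary contains a genuinely flat disk $D$, with constant outward normal $n_0$ on $D$, and set $\Omega=\lambda\Omega_0$ with $\lambda$ small enough that $\diam(\Omega)<\epsilon(p,\alpha)$, where $\epsilon(p,\alpha)$ is the constant from Theorem \ref{main theorem}(i). I then fix a tangent velocity $v^0$ with $v^0\cdot n_0=0$ and $|v^0|=:V$ large (to be chosen), and take $g(x,v)=\chi(x)\psi(v)$, where $\chi\in C_c^\infty$ is supported in the relative interior of $D$ with $\chi\equiv1$ on a subdisk $D'$, and $\psi\in C_c^\infty(\R^3)$ is supported in a small ball about $v^0$ with $\psi\ge c_0>0$ there. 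The entire effect will be produced by grazing velocities $v\cdot n_0\to0^-$ over $D'$.

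For Step A I will show $Jg\in W^{1,p}_\alpha\setminus W^{1,2}_\alpha$. Differentiating \eqref{J} gives $\nabla_x Jg = e^{-\nu\tau}\nabla_q g - e^{-\nu\tau}\nabla_x\tau\,(v\cdot\nabla_q g+\nu g)$, whose only singular factor is $\nabla_x\tau=n(q)/(v\cdot n(q))$, of size $1/|v\cdot n(q)|$. Using the flow change of variables $dx=|v\cdot n(q)|\,d\Sigma(q)\,d\tau$ and $\int_0^{\tau}e^{-p\nu s}\,ds\lesssim1$, I reduce $\|\nabla_x Jg\|_{L^p_\alpha}^p$ to $\int_{\partial\Omega}\int_{\R^3}|v\cdot n(q)|^{1-p}\,|v\cdot\nabla_q g+\nu g|^p\,e^{p\alpha|v|^2}\,dv\,d\Sigma$. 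Since $g$ is smooth and $v$-compactly supported, the integrand is bounded apart from the factor $|v_n|^{1-p}$ (with $v_n:=v\cdot n(q)$), and the transverse integral $\int|v_n|^{1-p}\,dv_n$ near $0$ converges iff $p<2$. This yields $Jg\in W^{1,p}_\alpha$ for $1\le p<2$; the analogous computation for $\nabla_v Jg$ gives the same threshold because $|\nabla_v\tau|\lesssim\tau/|v_n|$ with $\tau\lesssim\diam(\Omega)/|v|$. At $p=2$, on $D'$ one has $\nabla_q g=0$ and $v\cdot\nabla_q g+\nu g=\nu(v)\psi(v)$, bounded below on the grazing set, so $\int|v_n|^{-1}\,dv_n=\infty$ forces $\nabla_x Jg\notin L^2_\alpha$. (The same change of variables produces no singular factor for $Jg$ itself, so $Jg\in L^2_\alpha$.)

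Step B records the solution and its low regularity. Since $Jg\in W^{1,p}_\alpha$ and $\diam(\Omega)<\epsilon$, Theorem \ref{main theorem}(i) provides a unique $f\in W^{1,p}_\alpha$; Lemma \ref{estimate on Lp} with $p=2$ makes $S_\Omega K$ a contraction on $L^2_\alpha$, so \eqref{Picard} also converges in $L^2_\alpha$ and $f\in L^2_\alpha\subset L^2(\Omega\times\R^3)$. A parallel Neumann argument in $L^\infty$ (using $\sup_v\int|k(v,v^*)|\,dv^*\lesssim1$ and $\diam(\Omega)$ small) gives $\|f\|_{L^\infty}\lesssim\|g\|_{L^\infty}$, a bound I will use in the next step. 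For Step C I argue by contradiction: assuming $f\in W^{1,2}_\alpha$, I differentiate $f=Jg+S_\Omega Kf$ and commute $\nabla_x$ past $S_\Omega K$ (as in the proof of Lemma \ref{estimate on W1p for 1 to 2}) to obtain $\nabla_x f = e^{-\nu\tau}\nabla_q g + S_\Omega K\nabla_x f + e^{-\nu\tau}\nabla_x\tau\,C(q,v)$, where $C(q,v):=(Kf)(q,v)-v\cdot\nabla_q g-\nu(v)g(q,v)$. Under the hypothesis $\nabla_x f\in L^2_\alpha$, Lemma \ref{estimate on Lp} gives $S_\Omega K\nabla_x f\in L^2_\alpha$, and $e^{-\nu\tau}\nabla_q g\in L^2_\alpha$ by Step A, so the singular term $e^{-\nu\tau}\nabla_x\tau\,C$ would also lie in $L^2_\alpha$; by the change of variables of Step A this forces $C\to0$ along the grazing set over $D'$.

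The main obstacle, and the technical heart, is exactly this last point: the correction $f-Jg$ carries the same $|v\cdot n(q)|^{-1}$ boundary singularity, so a naive splitting fails and I must rule out cancellation of the two singular coefficients. This is where the large tangent speed $V$ enters. On $D'$ one has $\nabla_q g=0$, hence $C(q,v)=(Kf)(q,v)-\nu(v)\psi(v)$, and for $v$ near $v^0$ the hard-sphere bound $|(Kf)(q,v)|\le\|f\|_{L^\infty}\int|k(v,v^*)|\,dv^*\lesssim\|g\|_{L^\infty}/(1+|v|)\sim\|g\|_{L^\infty}/V$ is dominated by $\nu(v)\psi(v)\gtrsim Vc_0$ once $V$ is taken large. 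Thus $|C|\gtrsim V>0$ on the grazing set, the transverse integral $\int|v_n|^{-1}\,dv_n$ diverges, and the singular term is not in $L^2_\alpha$, contradicting the hypothesis. Hence $f\in L^2\cap W^{1,p}_\alpha$ but $f\notin W^{1,2}_\alpha$. I expect the delicate points to be verifying this domination rigorously (via $\int|k(v,v^*)|\,dv^*\lesssim(1+|v|)^{-1}$ for $\gamma=1$ and the $L^\infty$ bound on the boundary trace of $f$) and justifying the flow change of variables and the commutation of $\nabla_x$ with $S_\Omega K$ uniformly up to the grazing set.
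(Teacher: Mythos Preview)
Your proposal is correct and the overall architecture matches the paper's: a convex domain with a flat boundary patch, boundary data localized on that patch, $Jg\in W^{1,p}_\alpha$ for $p<2$ via the flow change of variables, existence of $f\in L^2\cap W^{1,p}_\alpha$ by contraction, and a contradiction argument in which one isolates the singular term $e^{-\nu\tau}(\nabla_x\tau)\,C(q,v)$ and shows that $C$ does not vanish on the grazing set over the flat patch.

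The genuine difference is in how you rule out cancellation in $C$. The paper takes $g(x,v)=\varphi_1(x)e^{-|v|^2/2}$, restricts to \emph{small} $|v|$, splits $(Kf)(q,v)=\int_{\Gamma^+_q}+\int_{\Gamma^-_q}$, iterates once more to make the $\Gamma^+$ piece $O(\diam\Omega)$, and then performs an explicit hard-sphere computation (Lemma~\ref{lem:neg_int_flat}) to obtain
\[
\nu(0)-\int_{\Gamma^-_q}k(0,v^*)e^{-|v^*|^2/2}\,dv^*=2^{-1/2}-2^{-3/2}=2^{-3/2}>0.
\]
You instead localize $\psi$ near a \emph{large} tangent velocity $|v^0|=V$ and exploit the hard-sphere asymptotics $\nu(v)\gtrsim V$ versus $|(Kf)(q,v)|\le\|f\|_{L^\infty}\int|k(v,\cdot)|\lesssim\|g\|_{L^\infty}/V$, which gives $|C|\gtrsim V$ without any explicit evaluation. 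This is cleaner and more robust (no special formulae for $\nu,k$ are needed, only the growth/decay from Assumption~A), at the price of needing a uniform-in-$V$ bound $\|f\|_{L^\infty}\lesssim\|g\|_{L^\infty}$. That bound does hold, but note that a single application of $S_\Omega K$ is not a contraction on $L^\infty$; you should invoke the double-iteration estimate $\|S_\Omega K S_\Omega K\|\lesssim\diam(\Omega)$ (the $\alpha=0$ case of Lemma~\ref{lem:small_second_convex}) and work in $C_0((\Omega\times\R^3)\cup\Gamma^\pm)$ so that the boundary trace $f(q,\cdot)$ is genuinely defined when you form $(Kf)(q,v)$. With those two small upgrades your argument goes through.
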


\begin{lemma} \label{lem:optimality3}
We consider the hard sphere case $\gamma=1$. For fixed $2 \leq p < 3$ and $0 \leq \a < (1 - \rho)/2$, there exist a bounded convex domain $\Omega$ with its boundary of positive Gaussian curvature and a boundary data $g$ such that the boundary value problem \eqref{SLBE}--\eqref{inbdry} has a solution in $L^3(\O \times \R^3) \cap W^{1, p}_\a(\O \times \R^3)$ but this solution does not belong to $W^{1, 3}_\a(\O \times \R^3)$.
\end{lemma}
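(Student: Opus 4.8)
The plan is to take $\Omega=B_R$, a ball of small radius $R$, whose boundary has constant positive Gaussian curvature, and to prescribe $g(x,v)=\phi(v)$ independent of position, where $\phi$ is a fixed Schwartz function with enough Gaussian decay that all weighted velocity integrals below converge and, crucially, with $L\phi\not\equiv 0$ on the annulus $\{1<|v|<2\}$ (that is, $\phi$ lies outside the kernel of $L$). For a ball the geometry is exact: the chord that the line $\{x+tv\}$ cuts from $B_R$ has length $2R\,N$, where $N=N(x,v)=|n(q)\cdot\hat v|$ and $q=q(x,v)$ is the point at which the backward ray $x-sv$ first meets $\partial\Omega$; and for fixed $v$ the map $x=q+sv$ with $q\in\{n\cdot v<0\}$ and $0<s<\ell(q,v)=2RN/|v|$ parametrizes $\Omega$ with Jacobian $dx=|v|\,N\,d\Sigma(q)\,ds$ and $\tau(q+sv,v)=s$. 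This turns Lemma \ref{circle lemma} into an identity and drives every estimate.

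First I would confirm existence in $W^{1,p}_\alpha$. Since $Jg=e^{-\nu(v)\tau(x,v)}\phi(v)$ and $\nabla_x\tau=n(q)/(n(q)\cdot v)$, one has $|\nabla_x Jg|\sim\nu(v)|\phi(v)|/(|v|N)$; inserting this into the weighted norm and using the change of variables above (so that $e^{-\nu\tau}=e^{-\nu s}$ and $\int_0^{\ell}e^{-p\nu s}\,ds\le\ell=2RN/|v|$) bounds $\|\nabla_x Jg\|_{L^p_\alpha}^p$ by a multiple of
\[
\int_{\R^3}\frac{\nu(v)^p|\phi(v)|^p}{|v|^p}\,e^{p\alpha|v|^2}\Big(\int_{\partial\Omega}N(q,v)^{\,2-p}\,d\Sigma(q)\Big)dv .
\]
The boundary factor is finite exactly because near the grazing circle $N$ is comparable to the distance to that circle, so $\int_0^{\epsilon_0} u^{2-p}\,du$ converges for $p<3$; the velocity factor converges there as well. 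The $\nabla_v$ term is milder because $\nabla_v\tau=-\tau\,n(q)/(n(q)\cdot v)$ carries a compensating factor $\tau\sim N$. Hence $Jg\in W^{1,p}_\alpha$ for all $1\le p<3$, Theorem \ref{main theorem}(ii) produces a unique $f\in W^{1,p}_\alpha$ once $R$ is small, and since $f$ is bounded with Gaussian velocity tail it lies in $L^3(\Omega\times\R^3)$.

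The heart of the matter is $f\notin W^{1,3}_\alpha$, which I would prove by contradiction: assume $\nabla_x f\in L^3_\alpha$. Differentiating \eqref{integral form} with $g=\phi$, applying the Leibniz rule to the moving endpoint $\tau(x,v)$ and commuting $\nabla_x K=K\nabla_x$, one gets the exact splitting
\[
\nabla_x f=\frac{n(q)}{n(q)\cdot v}\,e^{-\nu\tau}\,Lf(q,v)+(S_\Omega K)(\nabla_x f),
\]
where the coefficient $Lf(q,v)=-\nu(v)f(q,v)+Kf(q,v)$ uses $f=\phi$ on the incoming boundary. By Lemma \ref{estimate on Lp} with $p=3$ the remainder $(S_\Omega K)(\nabla_x f)$ lies in $L^3_\alpha$, so the singular term $\Phi:=\tfrac{n(q)}{n(q)\cdot v}e^{-\nu\tau}Lf(q,v)$ would too. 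But $|\Phi|\sim|Lf(q,v)|/(|v|N)$, and restricting the same change of variables to $\{1<|v|<2\}$ and $\{N<\epsilon_0\}$ gives
\[
\|\Phi\|_{L^3_\alpha}^3\gtrsim\int_{1<|v|<2}\frac{e^{3\alpha|v|^2}}{|v|^3}\Big(\int_{\{N<\epsilon_0\}}\frac{|Lf(q,v)|^3}{N}\,d\Sigma(q)\Big)dv ,
\]
which is infinite because $\int_{\{N<\epsilon_0\}}N^{-1}\,d\Sigma$ diverges logarithmically; this contradicts $\Phi\in L^3_\alpha$, so $\nabla_x f\notin L^3_\alpha$.

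The hard part is the non-degeneracy of $Lf(q,v)$ on the grazing set: were $Lf$ to vanish there, the last boundary factor could be tamed and the divergence lost. I would secure the lower bound $|Lf(q,v)|\ge\tfrac12|L\phi(v)|$ from the smallness of $R$: as $R\to 0$ one has $\tau\to 0$ and $S_\Omega Kf\to 0$, hence $f(q,\cdot)\to\phi$ uniformly on compact velocity sets and $Kf(q,v)\to K\phi(v)$, so $Lf(q,v)\to L\phi(v)$ uniformly; since $\phi$ was chosen with $L\phi\not\equiv 0$ on $\{1<|v|<2\}$, for $R$ small $|Lf(q,v)|$ stays bounded below on a velocity set of positive measure, for every near-grazing $q$. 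The remaining technical point is to justify the Leibniz differentiation and the splitting identity for a merely $W^{1,p}_\alpha$ solution; I would do this by differentiating the Picard partial sums $\sum_{i\le n}(S_\Omega K)^iJg$ — for which the endpoint differentiation is classical — and passing to the limit using the $W^{1,p}_\alpha$ convergence established in the proof of Theorem \ref{main theorem}(ii).
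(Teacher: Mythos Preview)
Your argument is correct and complete in outline, but it takes a genuinely different route from the paper's. The paper fixes the boundary data $g(x,v)=\varphi_2(x)e^{-|v|^2/2}$ with a \emph{spatial} cutoff supported in a small polar cap $\partial\Omega_{\theta_2}$, and then works near $v=0$: on the set $\tilde D_{\theta_1,r_0}=\{q(x,v)\in\partial\Omega_{\theta_1},\ \tau\le 1,\ |v|<r_0\}$ the derivative identity reduces to a multiple of $\nabla_x\tau$ times
\[
\nu(v)e^{-|v|^2/2}-\int_{\Gamma^-_{q}}k(v,v^*)e^{-|v^*|^2/2}\,dv^*\;-\;\int_{\Gamma^+_{q}}k(v,v^*)f(q,v^*)\,dv^*,
\]
and the paper shows positivity of the first two terms at $v=0$ by an \emph{explicit hard-sphere computation} (their Lemma~\ref{lem:neg_int_ball} gives the value $2^{-3/2}$), while the $\Gamma^+$ term is made small by taking $\theta_2$ and $r$ small. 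The spatial cutoff is essential there because $e^{-|v|^2/2}$ lies in $\ker L$, so the full $\R^3$-integral would cancel $\nu(v)e^{-|v|^2/2}$ exactly; the cutoff kills most of the $\Gamma^+$ contribution and leaves the half-space integral, which does not cancel.

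Your choice $g=\phi(v)$ with $\phi\notin\ker L$ sidesteps this: since $f(q,\cdot)=\phi$ on $\Gamma^-_q$, the boundary term collapses neatly to $(\nabla_x\tau)e^{-\nu\tau}Lf(q,v)$, and the non-degeneracy $|Lf(q,v)|\ge\tfrac12|L\phi(v)|$ comes from the uniform convergence $Lf(q,\cdot)\to L\phi$ as $R\to 0$ rather than from any model-specific calculation. This buys you a shorter argument that in fact works under Assumption~A alone, not just for hard spheres. The paper's approach, by contrast, is fully explicit and shows concretely how the Maxwellian data---arguably the most natural choice---already produces the blow-up once a spatial inhomogeneity is introduced. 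Both routes localize the divergence to the logarithmic non-integrability of $N^{-1}$ on $\partial\Omega$; you do it on an annulus $1<|v|<2$, the paper near $v=0$.
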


Concerning Lemma \ref{lem:optimality2}, we choose $\Omega$ as a small bounded convex domain such that
\begin{equation} \label{flat_boundary}
D_{r_1} := \{ x = (0, x_2, x_3) \in \R^3 \mid |x| < r_1 \} \subset \partial \Omega
\end{equation}
with a small radius $r_1$ and 
\begin{equation} \label{halfball_contained}
\{ x = (x_1, x_2, x_3) \in \R^3 \mid |x| < r_1, x_1 < 0 \} \subset \Omega.
\end{equation}
We remark that $n(0) = (1, 0, 0)$. Let $\varphi_1$ be a smooth cut-off function on $\partial \O$ such that $0 \leq \varphi_1 \leq 1$, $\varphi_1(x) = 1$ for $x \in D_{r_1/4}$, and $\varphi_1(x) = 0$ for $x \in \partial \O \setminus D_{r_1/2}$. We pose the boundary data $g$ of the form: 
\begin{equation} \label{inbdry_optimal_convex_2}
g(x, v) = \varphi_1(x) e^{-\frac{1}{2} |v|^2}, \quad (x, v) \in \Gamma^-.
\end{equation}
We shall see in Section \ref{sec:counterexample} that, when $\diam(\O)$ is small enough, the solution to the boundary value problem \eqref{SLBE}-\eqref{inbdry} with the boundary data \eqref{inbdry_optimal_convex_2} is an example of Lemma \ref{lem:optimality2}.

For Lemma \ref{lem:optimality3}, the domain we consider is a small ball centered at the origin with radius $r$. We introduce the spherical coordinates on the boundary: $x = (r \cos \theta, r \sin \theta \cos \phi, r \sin \theta \sin \phi)$ for $\theta \in [0, \pi]$ and $\phi \in [0, 2 \pi)$. With these coordinates, for $\theta_0 \in (0, \pi)$, let $\partial \O_{\theta_0} := \{ x \in \partial \O \mid 0 \leq \theta < \theta_0 \}$. Take $0 < \theta_1 < \theta_2 < \pi$ and a smooth cut-off function $\varphi_2$ on $\partial \O$ such that $\varphi_2(x) = 1$ for $x \in \partial \O_{\theta_1}$, $\varphi_2(x) = 0$ for $x \in \partial \O \setminus \partial \O_{\theta_2}$, and $0 \leq \varphi_2(x) \leq 1$ for $x \in \partial \O_{\theta_2} \setminus \partial \O_{\theta_1}$. We pose the boundary data $g$ of the form: 
\begin{equation} \label{inbdry_optimal_convex_3}
g(x, v) = \varphi_2(x) e^{-\frac{1}{2} |v|^2}, \quad (x, v) \in \Gamma^-.
\end{equation}
We shall also see in Section \ref{sec:counterexample} that, if we choose $r$, $\theta_2$ and $\theta_1$ small enough, the solution to the boundary value problem \eqref{SLBE}-\eqref{inbdry} with the boundary data \eqref{inbdry_optimal_convex_3} is an example of Lemma \ref{lem:optimality3}.

The organization of the rest part of this article is as follows. 
In Section \ref{sec:pre}, we prepare some preliminary estimates for the operators $J$, $S_\O$ and $K$ on $L^p_\a$ spaces and give a proof of Lemma \ref{estimate on Lp}. Also, we introduce weighed spaces of bounded continuous functions, which are denoted by $C_\a$, and show boundedness of these operators on these spaces. In addition, we introduce a contraction mapping argument on $C_\a$, which will be used in Section \ref{sec:counterexample}. 
In Section \ref{sec:bounded_domain}, by showing Lemma \ref{estimate on W1p for 1 to 2}, Lemma \ref{BLp_est} and Lemma \ref{BL1_est}, we derive the estimate \eqref{SB13} for fixed $1 \leq p < 2$ and $0 \leq \a < (1 - \rho)/2$. 
In Section \ref{sec:bounded_domain_B}, we further assume the positivity of Gaussian curvature to improve the estimate in Lemma \ref{estimate on W1p for 1 to 2} and obtain Lemma \ref{estimate on W1p for 2 to 3}. The improved estimate shows that the estimate \eqref{SB13} holds for $2 \leq p < 3$ and $0 \leq \a < (1 - \rho)/2$. 
In Section \ref{sec:counterexample}, we prove Lemma \ref{lem:optimality2} and Lemma \ref{lem:optimality3}.

\section{Preliminaries} \label{sec:pre}

In this section, we introduce some important estimates as preliminaries.

Let $\alpha \geq 0$ and $1 \leq p < \infty$. Define a function space $L^p_\alpha(\Gamma^-)$ for the incoming boundary data by
\[
L^p_\alpha(\Gamma^-) := \left\{ g \in L^p(\Gamma^-) \mid \| g \|_{L^p_\alpha(\Gamma^-)} < \infty \right\},
\]
where 
\begin{align*}
\| g \|_{L^p_\alpha(\Gamma^-)}^p :=& \int_{\Gamma^-} |g(z, v)|^p e^{p \alpha |v|^2} N(z, v) |v|\,d\Sigma(z) dv,
\end{align*}
and $N(z, v)$ is the function defined in Lemma \ref{circle lemma}. We remark that, unlike the norm $\| \cdot \|_{L^p_\a(\partial \O \times \R^3)}$, the norm $\| \cdot \|_{L^p_\alpha(\Gamma^-)}$ has the extra weight $N(z, v) |v|$. 

We first give an estimate for the boundedness of the operator $J$.

\begin{proposition} \label{prop:bound_J_alpha}
Let $\O$ be a bounded convex domain. Let $J$ be the operator as defined by \eqref{J}. Also, let $\a \geq 0$ and $1 \leq p < \infty$. Then, for all $g \in L^p_\alpha(\Gamma^-)$, we have $Jg \in L^p_\alpha(\O \times \R^3)$. Moreover, we have
\[
\opnorm{J}_{p, \alpha} := \sup_{\substack{g \in L^p_\alpha(\Gamma^-) \\ g \neq 0}} \frac{\| Jg \|_{L^p_\alpha(\O \times \R^3)}}{\| g \|_{L^p_\alpha(\Gamma^-)}} \leq \frac{1}{(p \nu_0)^{\frac{1}{p}}},
\]
where $\nu_0$ is the constant in \eqref{AA}.
\end{proposition}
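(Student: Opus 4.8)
The plan is to reduce the estimate to a one-dimensional computation along the characteristic lines of the free transport operator, via the standard kinetic change of variables from $x$ to the pair \emph{(entry point, time along the ray)}. Fix $v \in \R^3$ with $v \neq 0$; the set $\{v = 0\}$ is negligible and can be discarded. Since $\O$ is convex, each backward ray from $x \in \O$ in direction $-v$ meets $\partial\O$ at the single point $q(x, v) = x - \tau(x, v)v$, which satisfies $n(q(x,v)) \cdot v < 0$, so that $(q(x,v), v) \in \Gamma^-$. Thus, for fixed $v$, the map $x \mapsto (z, s) := (q(x, v), \tau(x, v))$ is a bijection of $\O$ onto
\[
\{ z \in \partial\O \mid n(z) \cdot v < 0 \} \times (0, \sigma(z,v)),
\]
with inverse $x = z + sv$, where $\sigma(z,v)$ denotes the length of the chord of $\O$ through $z$ in direction $v$. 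Convexity is exactly what guarantees this is a well-defined bijection.

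The key step is the Jacobian of this substitution. Parametrizing $\partial\O$ locally by $(u_1, u_2) \mapsto z(u_1, u_2)$ and writing $x = z(u_1, u_2) + sv$, the Jacobian determinant equals $(\partial_{u_1} z \times \partial_{u_2} z) \cdot v = \pm |\partial_{u_1} z \times \partial_{u_2} z|\,(n(z)\cdot v)$, and since $d\Sigma(z) = |\partial_{u_1} z \times \partial_{u_2} z|\,du_1 du_2$, this yields the coarea-type identity
\[
dx = |n(z) \cdot v|\, d\Sigma(z)\, ds = N(z, v)\,|v|\, d\Sigma(z)\, ds.
\]
This is precisely where the extra weight $N(z,v)|v|$ appearing in the definition of $\|\cdot\|_{L^p_\alpha(\Gamma^-)}$ comes from, and I expect the careful justification here (bijectivity, and the fact that the grazing set $\{n(z)\cdot v = 0\}$ has measure zero) to be the main technical point of the argument.

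With the substitution in hand the rest is routine. Along a trajectory one has $Jg(x, v) = e^{-\nu(v)s} g(z, v)$ by \eqref{J}, and the weight $e^{p\alpha|v|^2}$ depends on $v$ only, so
\[
\| Jg \|_{L^p_\alpha(\O \times \R^3)}^p = \int_{\R^3}\!\int_{\{n(z)\cdot v<0\}} |g(z,v)|^p e^{p\alpha|v|^2} N(z,v)|v| \left( \int_0^{\sigma(z,v)} e^{-p\nu(v)s}\,ds \right) d\Sigma(z)\, dv.
\]
Using $\nu(v) \geq \nu_0$ from \eqref{AA'}, the inner integral is controlled by $\int_0^\infty e^{-p\nu(v)s}\,ds = (p\nu(v))^{-1} \leq (p\nu_0)^{-1}$, a constant independent of $z$ and $v$. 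Pulling it out of the integral leaves exactly $\|g\|_{L^p_\alpha(\Gamma^-)}^p$, whence $\| Jg \|_{L^p_\alpha(\O \times \R^3)}^p \leq (p\nu_0)^{-1}\,\|g\|_{L^p_\alpha(\Gamma^-)}^p$. Taking $p$-th roots gives the pointwise bound for each $g$, and taking the supremum over $g \neq 0$ gives $\opnorm{J}_{p,\alpha} \leq (p\nu_0)^{-1/p}$, as claimed.
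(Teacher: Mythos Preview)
Your proof is correct and follows essentially the same approach as the paper: both use the change of variables $x = z + sv$ along characteristics, identify the Jacobian as $N(z,v)|v|$, and then bound the $s$-integral by $(p\nu_0)^{-1}$ using $\nu(v)\geq\nu_0$. The only difference is that you spell out the Jacobian computation explicitly, whereas the paper cites \cite{CS} for the coarea identity.
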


\begin{proof}
By the change of coordinates: $x = z + tv$ with $(z, v) \in \Gamma^-$ and $t > 0$, we have
\begin{equation} \label{domain_to_boundary}
\begin{split}
\int_{\O \times \R^3} |f(x, v)|\,dxdv =& \int_{\R^3} \int_{\Gamma^-_v} \int_0^{\tau(z, - v)} |f(z + tv, v)|\,dt\,N(z, v) |v|\,d\Sigma(z) dv\\
=& \int_{\partial \O} \int_{\Gamma^-_z} \int_0^{\tau(z, -v)} |f(z + tv, v)|\,dt\,N(z, v) |v|\,dv d\Sigma(z)
\end{split}
\end{equation}
for $f \in L^1(\O \times \R^3)$, where
\[
\Gamma^-_v := \{ z \in \p \O \mid n(z) \cdot v < 0 \}
\]
for fixed $v \in \R^3$ and
\[
\Gamma^-_z := \{ v \in \R^3 \mid n(z) \cdot v < 0 \}
\]
for fixed $z \in \p \O$. See \cite{CS} for the detail. Thus, for $g \in L^p_\a(\O \times \R^3)$, we have
\begin{align*}
\| J g \|_{L^p_\a(\O \times \R^3)}^p =& \int_{\O \times \R^3} e^{- p \nu(v) \tau(x, v)} |g(q(x, v), v)|^p e^{p \a |v|^2}\,dxdv\\
=& \int_{\R^3} \int_{\Gamma^-_v} \int_0^{\tau(z, -v)} e^{-p \nu(v) t} |g(z, v)|^p\,dt N(z, v) |v| e^{p \a |v|^2}\,d\Sigma(z) dv\\
\leq& \frac{1}{p \nu_0} \int_{\R^3} \int_{\Gamma^-_v} |g(z, v)|^p N(z, v) |v| e^{p \a |v|^2}\,d\Sigma(z) dv\\
=& \frac{1}{p \nu_0} \| g \|_{L^p_\a(\Gamma_-)}^p,
\end{align*}
which implies that $\| Jg \|_{L^p_\a(\O \times \R^3)} \leq (p \nu_0)^{-1/p} \| g \|_{L^p_\a(\Gamma_-)}$. This completes the proof.
\end{proof}

We next introduce estimates for integrals which are related to the operator $S_\O$ and its derivatives.

\begin{lemma} \label{lem:est_S_convex}
Let $\O$ be a bounded convex domain. Then, for $v \neq 0$ and $a > 0$, we have
\[
\int_0^{\tau(x, v)} e^{-a t}\,dt \leq \min \left\{ \frac{1}{a}, \frac{\diam(\O)}{|v|} \right\}
\]
for all $x \in \O$.
\end{lemma}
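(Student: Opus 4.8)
The plan is to evaluate the integral exactly and then bound it in two independent ways, taking the minimum at the end. First I would compute
\[
\int_0^{\tau(x, v)} e^{-a t}\,dt = \frac{1 - e^{-a \tau(x, v)}}{a}.
\]
Since $\tau(x, v) \geq 0$ and $a > 0$, we have $0 \leq 1 - e^{-a \tau(x, v)} \leq 1$, which immediately yields the bound $\int_0^{\tau(x, v)} e^{-a t}\,dt \leq 1/a$.

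For the second bound I would discard the exponential using the pointwise estimate $e^{-a t} \leq 1$ for $t \geq 0$, so that
\[
\int_0^{\tau(x, v)} e^{-a t}\,dt \leq \int_0^{\tau(x, v)} 1\,dt = \tau(x, v).
\]
It then remains to show $\tau(x, v) \leq \diam(\O)/|v|$. Here I would appeal to the geometric meaning of $\tau$: because $\O$ is bounded, the backward ray $s \mapsto x - sv$ emanating from $x \in \O$ must exit $\O$, so $\tau(x, v)$ is finite and $q(x, v) = x - \tau(x, v) v$ lies on $\partial \O \subset \overline{\O}$. Since both $x$ and $q(x, v)$ belong to $\overline{\O}$, their distance is at most the diameter, that is $|x - q(x, v)| \leq \diam(\O)$. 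Combining this with the identity $|x - q(x, v)| = |\tau(x, v) v| = \tau(x, v) |v|$ gives $\tau(x, v) \leq \diam(\O)/|v|$.

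Putting the two bounds together, the integral is dominated by both $1/a$ and $\diam(\O)/|v|$, hence by their minimum, which is exactly the claimed estimate. I expect no substantial obstacle in this lemma; the only point requiring mild care is the verification that $q(x, v)$ is a genuine point of $\overline{\O}$, which rests on the boundedness of $\O$ to guarantee that the exit time $\tau(x, v)$ is finite.
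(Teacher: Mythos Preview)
Your proof is correct and follows essentially the same approach as the paper: compute the integral exactly to get the $1/a$ bound, then use $e^{-at}\leq 1$ together with $\tau(x,v)\leq \diam(\O)/|v|$ for the second bound. The paper simply states the last inequality as following from the definition of $\tau$, whereas you spell out the argument via $|x-q(x,v)|\leq \diam(\O)$, which is a harmless elaboration.
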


\begin{proof}
By the direct integration, we have
\[
\int_0^{\tau(x, v)} e^{-a t}\,dt = \frac{1}{a} \left( 1 - e^{- a \tau(x, v)} \right) \leq \frac{1}{a}.
\]
On the other hand, since $e^{- a t} \leq 1$, we have
\[
\int_0^{\tau(x, v)} e^{-a t}\,dt \leq \tau(x, v) \leq \frac{\diam(\Omega)}{|v|}. 
\]
The last estimate follows from the definition of the function $\tau(x, v)$. This completes the proof.
\end{proof}

\begin{corollary} \label{cor:est_S_convex}
Let $\O$ be a bounded convex domain. Then, for $v \neq 0$, $a > 0$ and $b \geq 0$, we have
\[
\int_0^{\tau(x, v)} t^b e^{-a t}\,dt \lesssim \min \left\{ 1, \frac{\diam(\O)}{|v|} \right\}
\]
for all $x \in \O$.
\end{corollary}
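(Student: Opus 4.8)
The plan is to reduce everything to Lemma~\ref{lem:est_S_convex} by splitting into the two regimes encoded in the minimum, the split being governed by the size of $|v|$ relative to $\diam(\O)$. Throughout I would use the elementary bound $\tau(x,v) \le \diam(\O)/|v|$, which is immediate from the definition of $\tau$ together with the fact that the segment $s \mapsto x - s v$ travels a distance $s|v|$ that cannot exceed $\diam(\O)$ before leaving $\O$.

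In the regime $\diam(\O)/|v| \le 1$, where the minimum equals $\diam(\O)/|v|$, I would note that $\tau(x,v) \le \diam(\O)/|v| \le 1$, so $t \le 1$ on the entire interval of integration and hence $t^b \le 1$ there (using $b \ge 0$). Discarding this factor and applying Lemma~\ref{lem:est_S_convex} gives
\[
\int_0^{\tau(x,v)} t^b e^{-a t}\,dt \le \int_0^{\tau(x,v)} e^{-a t}\,dt \le \frac{\diam(\O)}{|v|} = \min\Big\{1, \frac{\diam(\O)}{|v|}\Big\},
\]
with no loss in the constant.

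In the complementary regime $\diam(\O)/|v| > 1$, where the minimum equals $1$, it suffices to bound the integral by a constant. Now $\tau(x,v)$ may be large and the weight $t^b$ can no longer be absorbed, so instead I would extend the domain of integration to the half-line and evaluate the Gamma integral,
\[
\int_0^{\tau(x,v)} t^b e^{-a t}\,dt \le \int_0^\infty t^b e^{-a t}\,dt = \frac{\Gamma(b+1)}{a^{b+1}},
\]
a finite quantity depending only on $a$ and $b$. Combining the two regimes yields the asserted estimate.

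The only real obstacle is this second regime: for small $|v|$ the chord length $\tau(x,v)$ is large and the monomial $t^b$ grows along it, so the clean reduction to Lemma~\ref{lem:est_S_convex} used in the first regime is unavailable. Passing to the full half-line and computing $\Gamma(b+1)$ via the substitution $u = a t$ resolves this, at the cost of an implicit constant depending on $a$ and $b$. This is harmless for the intended applications, where $a$ is comparable to $\nu(v) \ge \nu_0$ and $b$ is a fixed nonnegative exponent produced by differentiating $S_\O$, and it parallels the $1/a$ already appearing in Lemma~\ref{lem:est_S_convex}.
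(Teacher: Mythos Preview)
Your argument is correct, but it proceeds differently from the paper. The paper avoids the case split entirely by the pointwise bound
\[
t^b e^{-a t} \le \Bigl(\sup_{t>0} t^b e^{-\frac{a}{2}t}\Bigr)\, e^{-\frac{a}{2}t} \lesssim e^{-\frac{a}{2}t},
\]
after which a single application of Lemma~\ref{lem:est_S_convex} (with $a$ replaced by $a/2$) yields both branches of the minimum at once. By contrast, you split on whether $\diam(\O)/|v|$ is above or below $1$ and treat each regime separately: in the small-$\tau$ regime you use $t^b \le 1$ to reduce directly to Lemma~\ref{lem:est_S_convex}, and in the large-$\tau$ regime you bound by the full Gamma integral. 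Both approaches produce implicit constants depending on $a$ and $b$; the paper's route is shorter and more uniform, while yours makes the source of each branch of the minimum explicit.
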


\begin{proof}
The conclusion follows from the estimate
\[
t^b e^{-a t} \leq \left( \sup_{t > 0} t^b e^{-\frac{a}{2} t} \right) e^{-\frac{a}{2} t} \lesssim e^{-\frac{a}{2} t}.
\]
\end{proof}

We give an estimate for the boundedness of the operator $S_\O$.

\begin{proposition} \label{prop:bound_S_alpha}
Let $\O$ be a bounded convex domain. Let $S_\Omega$ be the operator as defined by \eqref{S}. Also, let $\a \geq 0$ and $1 \leq p < \infty$. Then, for all $f \in L^p_\alpha(\O \times \R^3)$, we have $S_\O f \in L^p_\alpha(\O \times \R^3)$. Moreover, we have
\[
\opnorm{S_\Omega}_{p, \alpha} := \sup_{\substack{f \in L^p_\alpha(\O \times \R^3) \\ f \neq 0}} \frac{\| S_\O f \|_{L^p_\alpha(\O \times \R^3)}}{\| f \|_{L^p_\alpha(\O \times \R^3)}} \leq \frac{1}{\nu_0},
\]
where $\nu_0$ is the constant in \eqref{AA}.
\end{proposition}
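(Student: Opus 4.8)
The plan is to bound the $L^p_\alpha$ norm of $S_\O f$ by reducing to the case $p=1$ via an application of Jensen's (or H\"older's) inequality to the $s$-integral defining $S_\O$, and then to exploit the exponential decay factor $e^{-\nu(v)s}$ together with the lower bound $\nu(v) \geq \nu_0$ from \eqref{AA}. First I would write out
\[
\| S_\O f \|_{L^p_\a(\O \times \R^3)}^p = \int_{\O \times \R^3} \left| \int_0^{\tau(x, v)} e^{-\nu(v) s} f(x - sv, v)\,ds \right|^p e^{p \a |v|^2}\,dxdv,
\]
and the key difficulty is that the inner integral is over $[0, \tau(x,v)]$ while the weight $e^{-\nu(v)s}$ is integrable on $[0, \infty)$ with total mass at most $1/\nu_0$ by Lemma \ref{lem:est_S_convex}. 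This suggests treating $e^{-\nu(v)s}\,ds$ as (a subprobability multiple of) a measure and applying Jensen's inequality with respect to it.

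The main steps I would carry out are as follows. I would split the weight as $e^{-\nu(v)s} = e^{-\frac{\nu(v)}{p'}s} \cdot e^{-\frac{\nu(v)}{p}s}$ where $p'$ is the conjugate exponent, and apply H\"older's inequality in the $s$-variable to obtain
\[
\left| \int_0^{\tau(x, v)} e^{-\nu(v) s} f(x - sv, v)\,ds \right|^p \leq \left( \int_0^{\tau(x,v)} e^{-\nu(v) s}\,ds \right)^{p/p'} \int_0^{\tau(x,v)} e^{-\nu(v) s} |f(x - sv, v)|^p\,ds.
\]
By Lemma \ref{lem:est_S_convex} the first factor is at most $\nu_0^{-p/p'} = \nu_0^{-(p-1)}$. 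I would then multiply by $e^{p\a|v|^2}$ and integrate in $x$ and $v$, and for the remaining double integral I would swap the order of integration, performing the $x$-integral first for fixed $s$ and $v$. The crucial observation is that the translation $x \mapsto x - sv$ is measure-preserving on $\R^3$, so $\int_\O |f(x - sv, v)|^p\,dx \leq \int_{\R^3} |f(y, v)|^p\,dy$ once $f$ is extended by zero outside $\O$; this decouples the $s$-integral, leaving another factor $\int_0^\infty e^{-\nu(v)s}\,ds \leq 1/\nu_0$.

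Putting the two factors together yields
\[
\| S_\O f \|_{L^p_\a(\O \times \R^3)}^p \leq \nu_0^{-(p-1)} \cdot \nu_0^{-1} \| f \|_{L^p_\a(\O \times \R^3)}^p = \nu_0^{-p} \| f \|_{L^p_\a(\O \times \R^3)}^p,
\]
and taking $p$-th roots gives the claimed bound $\opnorm{S_\O}_{p,\a} \leq 1/\nu_0$. The step I expect to require the most care is the interchange of integration order together with the zero-extension argument: one must check that extending $f$ by zero off $\O$ and translating does not increase the $L^p$ mass, and that the region $\{ 0 < s < \tau(x,v) \}$ maps correctly under the change of variables so that no spurious domain restriction is lost. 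The convexity of $\O$ guarantees that the chord from $x$ in direction $-v$ stays inside $\O$ for $0 < s < \tau(x,v)$, which is exactly what makes $x - sv \in \O$ and legitimizes bounding by the full integral over $\R^3$; the Gaussian weight $e^{p\a|v|^2}$ is inert throughout since it depends only on $v$.
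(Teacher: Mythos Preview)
Your proposal is correct and follows essentially the same approach as the paper: apply H\"older's inequality in the $s$-variable to extract a factor $\nu_0^{-(p-1)}$, then handle the remaining integral $\int_{\O \times \R^3}\int_0^{\tau(x,v)} e^{-\nu_0 s}|f(x-sv,v)|^p\,e^{p\a|v|^2}\,ds\,dxdv$ by a change of variables. The only cosmetic difference is that the paper invokes the exact identity of Lemma~\ref{change of variable lemma} (the substitution $(x,s)\mapsto (y,t)=(x-sv,s)$, which turns the $t$-integral into one over $[0,\tau(y,-v)]$) in place of your zero-extension and translation-invariance argument; both yield the same final factor $\nu_0^{-1}$.
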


To prove Proposition \ref{prop:bound_S_alpha}, we shall use the following lemma, which is proved in \cite{CCHKS}. 

\begin{lemma}\label{change of variable lemma}
Let $\O$ be a bounded convex domain. Then, for a nonnegative measurable function $h$ on $\O \times \R^3 \times [0, \infty)$, we have
\[
\int_{\mathbb{R}^3}\int_{\Omega}\int_0^{\tau(x, v)}h(x,v,s) \,dsdxdv=\int_{\mathbb{R}^3}\int_{\Omega}\int_0^{\tau(y, -u)}h(y+tu,u,t) \,dtdydu
\]    
\end{lemma}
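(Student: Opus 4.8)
The plan is to prove this as a change-of-variables identity for each fixed velocity, followed by Tonelli's theorem. The substitution is dictated by matching the two integrands: on the left the base point is $x$ with parameter $s$, while on the right the base point $y$ is joined to the evaluation point $y+tu$ by the same velocity $u$. So I would hold the velocity fixed, set $u=v$, and use the affine map $\Phi_v(x,s):=(x-sv,\,s)=:(y,t)$, whose inverse is $\Psi_v(y,t)=(y+tv,\,t)$.

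First I would verify that $\Phi_v$ is a measure-preserving bijection between the two parameter regions
\[
A_v := \{ (x,s) \mid x \in \O,\ 0 < s < \tau(x,v) \}, \qquad
B_v := \{ (y,t) \mid y \in \O,\ 0 < t < \tau(y,-v) \}.
\]
In the variables $(x_1,x_2,x_3,s)$ the Jacobian of $\Phi_v$ is the upper-triangular matrix with $1$'s on the diagonal and last column $(-v,1)^{\top}$, hence $\det=1$ and $dx\,ds = dy\,dt$. The genuine content is that $\Phi_v$ maps $A_v$ onto $B_v$, and this is where convexity of $\O$ enters. Given $(x,s)\in A_v$ and writing $y=x-sv$, the constraint $s<\tau(x,v)$ forces the segment $\{x-wv \mid 0\le w\le s\}$ to lie in $\O$; rewriting $x-wv = y+(s-w)v$ shows $y+s'v\in\O$ for all $0\le s'\le s$, which yields $y\in\O$ and, since $\O$ is open and $y+sv=x\in\O$, the strict inequality $\tau(y,-v)>s=t$, i.e. $(y,t)\in B_v$. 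Running the same computation backwards shows $\Psi_v$ maps $B_v$ into $A_v$, so $\Phi_v$ is a bijection. I expect this domain-matching step — the equivalence of the constraints $s<\tau(x,v)$ and $t<\tau(y,-v)$ — to be the only nonroutine point, and it rests on the elementary fact that for convex $\O$ the set $\{ s \mid x-sv\in\O \}$ is an interval.

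With the bijection in hand, for each fixed $v\neq 0$ the change-of-variables formula for nonnegative measurable integrands gives
\[
\int_{A_v} h(x,v,s)\,dx\,ds = \int_{B_v} h(y+tv,\,v,\,t)\,dy\,dt,
\]
since $x = y+sv = y+tv$ carries $h(x,v,s)$ to $h(y+tv,v,t)$. Finally I would integrate both sides over $v\in\R^3$ — the excluded set $\{v=0\}$ is Lebesgue-null, hence harmless — and invoke Tonelli's theorem, which is legitimate because $h\ge 0$ is measurable and $\tau$ is a measurable function of $(x,v)$. Writing the regions $A_v$ and $B_v$ as the iterated domains of integration and renaming the dummy variable $v$ as $u$ on the right-hand side then produces exactly
\[
\int_{\R^3}\int_{\O}\int_0^{\tau(x,v)} h(x,v,s)\,ds\,dx\,dv
= \int_{\R^3}\int_{\O}\int_0^{\tau(y,-u)} h(y+tu,u,t)\,dt\,dy\,du,
\]
which is the claimed identity.
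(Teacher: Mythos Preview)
Your argument is correct and is the standard way to prove this change-of-variables identity. Note, however, that the paper does not supply its own proof of this lemma: it simply cites \cite{CCHKS}. So there is no in-paper proof to compare against; what you have written is essentially the argument one would expect to find in that reference.

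One small remark: you single out convexity as the crucial ingredient for the domain-matching step, but in fact convexity is not needed there. With $\tau(x,v)=\inf\{s>0\mid x-sv\in\O^{c}\}$ and $\O$ open, the condition $0<s<\tau(x,v)$ already guarantees that the entire closed segment $\{x-s'v:0\le s'\le s\}$ lies in $\O$; writing this segment as $\{y+t'v:0\le t'\le t\}$ with $y=x-sv$ and $t=s$ immediately gives $y\in\O$ and $t<\tau(y,-v)$, and the converse is identical. Thus $A_v$ and $B_v$ are both described as the set of pairs whose associated segment lies in $\O$, and the bijection holds for any bounded open $\O$. This does not affect the validity of your proof --- a superfluous hypothesis is harmless --- but the point you flagged as ``the only nonroutine step'' is in fact routine.
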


\begin{proof}[Proof of Proposition \ref{prop:bound_S_alpha}]
For $f \in L^p_\a(\O \times \R^3)$, we have
\[
\| S_\O f \|_{L^p_\a(\O \times \R^3)}^p = \int_{\O \times \R^3} \left| \int_0^{\tau(x, v)} e^{- \nu(v) s} f(x - sv, v)\,ds \right|^p\, e^{p \a |v|^2} \,dxdv.
\]

When $p = 1$, by setting $h(x, v, s) = e^{-\nu_0 s} |f(x - sv, v)|$ and applying Lemma \ref{change of variable lemma}, we obtain
\begin{align*}
&\int_{\O \times \R^3} \left| \int_0^{\tau(x, v)} e^{- \nu(v) s} f(x - sv, v)\,ds \right| \,e^{\a |v|^2} \,dxdv\\
\leq& \int_{\O \times \R^3} \int_0^{\tau(x, v)} e^{- \nu_0 s} |f(x - sv, v)|\,ds \,e^{\a |v|^2} \,dxdv\\
=& \int_{\O \times \R^3} \left(\int_0^{\tau(y, -u)} e^{- \nu_0 t}\,dt\right) |f(y, u)| \,e^{\a |u|^2}\,dydu\\
\leq& \frac{1}{\nu_0} \int_{\O \times \R^3} |f(y, u)| \,e^{\a |u|^2}\,dydu.
\end{align*}
Here, we used Lemma \ref{lem:est_S_convex} in the last inequality. The above estimate implies that $\opnorm{S_\Omega}_{1, \alpha} \leq \nu_0^{-1}$.

For $1 < p < \infty$, we apply the H\"older inequality to have
\begin{align*}
&\int_{\O \times \R^3} \left| \int_0^{\tau(x, v)} e^{- \nu(v) s} f(x - sv, v)\,ds \right|^p \,e^{p \a |v|^2} \,dxdv\\
\leq& \int_{\O \times \R^3} \left( \int_0^{\tau(x, v)} e^{- \nu_0 s}\,ds \right)^{\frac{p}{p'}} \left( \int_0^{\tau(x, v)} e^{- \nu_0 s} |f(x - sv, v)|^p\,ds \right) \,e^{p \a |v|^2} \,dxdv\\
\leq& \frac{1}{\nu_0^{p - 1}} \int_{\O \times \R^3} \int_0^{\tau(x, v)} e^{- \nu_0 s} |f(x - sv, v)|^p \,e^{p \a |v|^2}\,ds \,dxdv\\
\leq& \frac{1}{\nu_0^p} \int_{\O \times \R^3} |f(y, u)|^p \,e^{p \a |u|^2}\,dydu,
\end{align*}
where $p' = p/(p-1)$. Thus, we have
\[
\| S_\O f \|_{L^p_\a(\O \times \R^3)}^p \leq \frac{1}{\nu_0^p} \| f \|_{L^p_\a(\O \times \R^3)}^p,
\]
which implies 
\[
\| S_\O f \|_{L^p_\a(\O \times \R^3)} \leq \nu_0^{-1} \| f \|_{L^p_\a(\O \times \R^3)}.
\]
This completes the proof.
\end{proof}

Next, we explore the boundedness of operators $J$ and $S$ acting on weighted spaces of bounded continuous functions. 

In order to study the continuity, we need to extend the definition of $Jg$. We let $\a \geq 0$ and introduce the function space 
\[
C_\alpha(\Gamma^-) := \{ g \in C(\Gamma^-) \mid \| g \|_{C_\alpha(\Gamma^-)} < \infty \},
\]
where
\[
\| g \|_{C_\alpha(\Gamma^-)} := \sup_{(x, v) \in \Gamma^-} |g(x, v)| e^{\alpha |v|^2}.
\]Recall that
\[
Jg(x, v) = e^{-\nu(v) \tau(x, v)} g(q(x, v), v),
\]
where $q(x, v) = x - \tau(x, v) v$. Since $\tau$ and $q$ are not defined 
at $v = 0$, $Jg$ is not defined there. We claim that it is continuous at $v = 0$ by defining $Jg(x, 0) = 0$ for $x \in \O$. Indeed, let $x \in \Omega$ and $d_x = \dist(x,\partial \O)$. Then, we have
\begin{equation*}
|Jg(x,v)| \leq \Vert g \Vert_{C_\alpha(\Gamma^-)} e^{- \frac{\nu_0 d_x}{|v|}} e^{-\a |v|^2}, 
\end{equation*} 
where the right hand side tends to 0 as $|v|$ tends to 0. Thus, $Jg$ is continuous at $v = 0$.  Since $\nu$, $\tau$ and $q$ are continuous on $(\O \times (\R^3 \setminus \{0\})) \cup \Gamma^\pm$, $Jg$ is also continuous. Hence, it is continuous on $(\Omega \times \R^3) \cup \Gamma^\pm$. 

We let $\alpha \geq 0$ and define the function space $C_\alpha((\O \times \R^3) \cup \Gamma^\pm)$ by
\[
C_\alpha((\O \times \R^3) \cup \Gamma^\pm) := \left\{ f \in C((\O \times \R^3) \cup \Gamma^\pm) \mid \| f \|_{C_\alpha((\O \times \R^3) \cup \Gamma^\pm)} < \infty \right\},
\]
where
\[
\| f \|_{C_\alpha((\O \times \R^3) \cup \Gamma^\pm)} := \sup_{(x, v) \in (\O \times \R^3) \cup \Gamma^\pm} |f(x, v)| e^{\alpha |v|^2}.
\]
Here we remark that, in general, a solution to the problem \eqref{integral form} is discontinuous on the grazing set $\Gamma_0$, where
\[
\Gamma_0 := \{ (x, v) \in \partial \Omega \times \R^3 \mid n(x) \cdot v = 0 \}.
\]
This is the main reason that we introduced the function space $C_\alpha((\O \times \R^3) \cup \Gamma^\pm)$ instead of $C_\alpha(\overline{\O} \times \R^3)$.

Concerning the boundedness of the operator $J$ and $S_\O$ on $C_\a$, we have the following observations.

\begin{proposition} \label{prop:bound_J_Calpha}
Let $\O$ be a bounded convex domain. Let $J$ be the operator as defined by \eqref{J}. Then, for all $g \in C_\alpha(\Gamma^-)$, we have $Jg \in C_\alpha((\O \times \R^3) \cup \Gamma^\pm)$. Moreover, we have
\[
\opnorm{J}_{\alpha} := \sup_{\substack{g \in C_\alpha(\Gamma^-) \\ g \neq 0}} \frac{\| Jg \|_{C_\alpha((\O \times \R^3) \cup \Gamma^\pm)}}{\| g \|_{C_\alpha(\Gamma^-)}} = 1.
\]
\end{proposition}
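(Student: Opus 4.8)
The plan is to prove the operator norm identity by establishing the two inequalities $\opnorm{J}_{\alpha} \le 1$ and $\opnorm{J}_{\alpha} \ge 1$ separately. The membership $Jg \in C_\alpha((\O \times \R^3) \cup \Gamma^\pm)$ will follow from the continuity already verified in the paragraph preceding the statement, together with the finiteness supplied by the upper bound, so the substance of the proof is the sharp two-sided control of the sup-norm.

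For the upper bound I would fix $g \in C_\alpha(\Gamma^-)$ and a point $(x, v) \in (\O \times \R^3) \cup \Gamma^\pm$. When $v = 0$ the convention $Jg(x, 0) = 0$ makes the estimate trivial, so assume $v \neq 0$. Then $\nu(v) \geq \nu_0 > 0$ and $\tau(x, v) \geq 0$ give $e^{-\nu(v)\tau(x,v)} \le 1$, whence
\[
|Jg(x, v)| e^{\alpha |v|^2} \le |g(q(x, v), v)| e^{\alpha |v|^2}.
\]
The key geometric point is that $(q(x, v), v) \in \Gamma^-$: for a convex domain, the backward ray emanating from an interior point, or from an outgoing boundary point, meets $\partial \O$ transversally, so $n(q(x, v)) \cdot v < 0$. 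Consequently the right-hand side is bounded by $\| g \|_{C_\alpha(\Gamma^-)}$, and taking the supremum over all admissible $(x, v)$ yields $\| Jg \|_{C_\alpha((\O \times \R^3) \cup \Gamma^\pm)} \le \| g \|_{C_\alpha(\Gamma^-)}$, that is, $\opnorm{J}_{\alpha} \le 1$.

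For the lower bound I would exploit that $J$ acts as the identity on the incoming boundary. Indeed, if $(z, v) \in \Gamma^-$ with $v \neq 0$, then moving backward from $z$ along $-v$ immediately leaves $\O$ since $n(z) \cdot (-v) > 0$; hence $\tau(z, v) = 0$, $q(z, v) = z$, and therefore $Jg(z, v) = g(z, v)$. Since $\Gamma^- \subset \Gamma^\pm$ lies inside the set over which the supremum defining $\| Jg \|_{C_\alpha((\O \times \R^3) \cup \Gamma^\pm)}$ is taken, restricting that supremum to $\Gamma^-$ gives
\[
\| Jg \|_{C_\alpha((\O \times \R^3) \cup \Gamma^\pm)} \ge \sup_{(z, v) \in \Gamma^-} |g(z, v)| e^{\alpha |v|^2} = \| g \|_{C_\alpha(\Gamma^-)},
\]
so $\opnorm{J}_{\alpha} \ge 1$. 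Combining the two bounds yields $\opnorm{J}_{\alpha} = 1$.

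The only nonroutine ingredient is the transversality claim $n(q(x,v)) \cdot v < 0$ used in the upper bound, which I would justify by noting that a line through an interior point of a convex body cannot be tangent to its boundary, so the backward exit point is strictly incoming. Everything else reduces to the elementary inequality $e^{-\nu(v)\tau(x,v)} \le 1$ and the identity $Jg|_{\Gamma^-} = g$; the latter in fact shows that $J$ is a sup-norm isometry onto its image, which is stronger than the stated norm bound.
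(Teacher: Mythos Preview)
Your proof is correct and follows essentially the same approach as the paper: the paper's proof simply notes that the exponential factor $e^{-\nu(v)\tau(x,v)} \le 1$ gives the upper bound and that the maximum is attained on $\Gamma^-$, which is exactly your two-sided argument spelled out in greater detail.
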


\begin{proof}
The estimate of $\opnorm{J}_{\alpha}$ follows from the fact that the exponential factor of $Jg$ is less than or equal to $1$, which implies that the maximum is attained on the boundary $\Gamma^-$.
\end{proof}


\begin{proposition} \label{prop:bound_S_Calpha}
Let $\O$ be a bounded convex domain. Let $S_\Omega$ be the operator as defined by \eqref{S}. Then, for all $f \in C_\alpha((\O \times \R^3) \cup \Gamma^\pm)$, we have $S_\O f \in C_\alpha((\O \times \R^3) \cup \Gamma^\pm)$. Moreover, we have
\[
\opnorm{S_\Omega}_{\alpha} := \sup_{\substack{f \in C_\alpha((\O \times \R^3) \cup \Gamma^\pm) \\ f \neq 0}} \frac{\| S_\O f \|_{C_\alpha((\O \times \R^3) \cup \Gamma^\pm)}}{\| f \|_{C_\alpha((\O \times \R^3) \cup \Gamma^\pm)}} \leq \frac{1}{\nu_0},
\]
where $\nu_0$ is the constant in \eqref{AA}.
\end{proposition}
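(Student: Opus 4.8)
The plan is to prove the operator-norm estimate first, which is short, and then to verify that $S_\O f$ is continuous on $(\O \times \R^3) \cup \Gamma^\pm$, which is where the real work lies.

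For the norm bound, fix $f \in C_\alpha((\O \times \R^3) \cup \Gamma^\pm)$ and a point $(x, v)$ with $v \neq 0$. The key structural observation is that the velocity is unchanged along the backward trajectory $s \mapsto x - sv$, so the Gaussian weight $e^{\alpha |v|^2}$ does not depend on $s$ and factors out of the integral defining $S_\O f(x, v)$. Since $(x - sv, v) \in (\O \times \R^3) \cup \Gamma^-$ for $0 \le s \le \tau(x, v)$, I can bound $|f(x - sv, v)| e^{\alpha |v|^2} \le \|f\|_{C_\alpha}$ pointwise, and then use $\nu(v) \ge \nu_0$ together with Lemma \ref{lem:est_S_convex} to estimate $\int_0^{\tau(x, v)} e^{-\nu(v) s}\,ds \le \nu(v)^{-1} \le \nu_0^{-1}$. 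This yields $|S_\O f(x, v)| e^{\alpha |v|^2} \le \|f\|_{C_\alpha}/\nu_0$, and taking the supremum gives $\opnorm{S_\Omega}_\alpha \le \nu_0^{-1}$.

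It then remains to check continuity. On $\O \times (\R^3 \setminus \{0\})$ and on $\Gamma^\pm$ this follows from the continuity of $\tau$, $q$, $\nu$ (recalled in the excerpt) and of $f$, since the integrand $(x, v, s) \mapsto e^{-\nu(v) s} f(x - sv, v)$ is continuous and the upper limit $\tau(x, v)$ varies continuously; one passes to the limit by dominated convergence, dominating by $\|f\|_{C_\alpha} e^{-\alpha |v|^2} e^{-\nu_0 s}$. In particular $\tau \equiv 0$ on $\Gamma^-$, so $S_\O f \equiv 0$ there, and the elementary bound $|S_\O f(x', v')| \le \|f\|_{C_\alpha} e^{-\alpha |v'|^2} \tau(x', v')$ shows $S_\O f \to 0$ as $(x', v')$ approaches $\Gamma^-$.

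The genuinely delicate point---and the main obstacle---is continuity at $(x, 0)$ with $x \in \O$, where $\tau(x, v) \to \infty$ as $v \to 0$ (a slow particle needs time at least $\dist(x, \partial \O)/|v|$ to exit). Mirroring the treatment of $Jg$ at $v = 0$ in the excerpt, I would \emph{define} $S_\O f(x, 0) := f(x, 0)/\nu(0)$, the value formally given by $\tau(x, 0) = \infty$. To prove continuity there, I would fix $\e > 0$, choose $T$ so large that $\int_T^\infty e^{-\nu_0 s}\,ds$ is negligible, and note that for $(x', v')$ near $(x, 0)$ one has $\tau(x', v') > T$ while $x' - sv'$ remains in a small neighborhood of $x$ for $0 \le s \le T$; the continuity of $f$ near $(x, 0)$ together with $\nu(v') \to \nu(0)$ then force $S_\O f(x', v') \to f(x, 0)/\nu(0)$. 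Splitting the time integral into the fixed window $[0, T]$ and a uniformly small tail is exactly where the unboundedness of $\tau$ must be reconciled with continuity, and this is the only step requiring genuine care.
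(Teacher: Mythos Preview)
Your norm-bound argument is essentially identical to the paper's: both use $|f(x-sv,v)|e^{\alpha|v|^2}\le\|f\|_{C_\alpha}$ along the trajectory and then $\int_0^{\tau(x,v)} e^{-\nu_0 s}\,ds\le \nu_0^{-1}$. For continuity, the paper simply declares it ``obvious'' in one sentence and moves on, whereas you supply a full justification, including the delicate case $(x,0)$ with $x\in\O$ (defining $S_\O f(x,0)=f(x,0)/\nu(0)$ and handling the blow-up of $\tau$ by splitting the time integral into a fixed window $[0,T]$ and an exponentially small tail). This is not a different route but added rigor: your argument makes explicit what the paper leaves implicit, and it is correct.
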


\begin{proof}
Continuity of the function $S_\O f$ is obvious if $f \in C_\alpha((\O \times \R^3) \cup \Gamma^\pm)$. We prove that the function $S_\O f$ is bounded with the weight $e^{\alpha |v|^2}$. For $f \in C_\alpha((\O \times \R^3) \cup \Gamma^\pm)$, we have
\begin{align*}
e^{\alpha |v|^2} |S_\O f(x, v)| \leq& \int_0^{\tau(x, v)} e^{- \nu(v) t} |f(x - tv, v)| e^{\alpha |v|^2} \,dt\\
\leq& \| f \|_{C_\alpha((\O \times \R^3) \cup \Gamma^\pm)} \int_0^{\tau(x, v)} e^{- \nu_0 t} \,dt\\
\leq& \frac{1}{\nu_0} \| f \|_{C_\alpha((\O \times \R^3) \cup \Gamma^\pm)},
\end{align*}
which implies that $\| S_\O f \|_{C_\alpha((\O \times \R^3) \cup \Gamma^\pm)} \leq \nu_0^{-1} \| f \|_{C_\alpha((\O \times \R^3) \cup \Gamma^\pm)}$. This completes the proof.
\end{proof}

We summarize useful estimates for the integral kernel $k$, which are modified from \cite{CLT}.

\begin{lemma} \label{lem:identity_bound_k}
Let $a \in \R$ and $0 < \rho < 1$. Then, we have
\begin{equation} \label{eq:identity_bound_k}
\begin{split}
&-\frac{1 - \rho}{4} \left( |v - v^*|^2 + \left( \frac{|v|^2 - |v^*|^2}{|v - v^*|} \right)^2 \right)\\ 
=& a |v|^2 - \alpha_{1, a, \rho} |v - v^*|^2 -(1-\rho) \left( \frac{(v - v^*) \cdot v}{|v - v^*|} - \alpha_{2, a, \rho} |v - v^*| \right)^2 - a |v^*|^2\\
=& -a |v|^2 - \alpha_{1, a, \rho} |v - v^*|^2 -(1-\rho) \left( \frac{(v - v^*) \cdot v^*}{|v - v^*|} + \alpha_{2, a, \rho} |v - v^*| \right)^2 + a |v^*|^2
\end{split}
\end{equation}
for all $v, v^* \in \R^3$, where
\begin{align} 
\alpha_{1, a, \rho} :=& \frac{(1 - \rho + 2a)(1 - \rho - 2a)}{4(1 -\rho)}, \label{alpha_1}\\
\alpha_{2, a, \rho} :=& \frac{1 - \rho - 2a}{2(1-\rho)}. \label{alpha_2}
\end{align}
\end{lemma}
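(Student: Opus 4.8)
The plan is to reduce the claimed vector identity to a one-variable algebraic check by projecting everything onto the direction of $v - v^*$. Write $w := v - v^*$, $r := 1 - \rho$, and introduce the scalar
\[
P := \frac{(v - v^*) \cdot v}{|v - v^*|},
\]
the component of $v$ along $w/|w|$. The preliminary observation that drives the whole computation is that the awkward quotient linearizes: since $|v|^2 - |v^*|^2 = (v + v^*) \cdot (v - v^*)$ and $v + v^* = 2v - w$, one gets
\[
\frac{|v|^2 - |v^*|^2}{|v - v^*|} = 2P - |w|.
\]
After this substitution the entire identity becomes a polynomial identity in the three monomials $|w|^2$, $P^2$, and $P|w|$, which I will verify by matching coefficients.

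First I would expand the left-hand side. Using the relation above, $|w|^2 + (2P - |w|)^2 = 2|w|^2 + 4P^2 - 4P|w|$, so the left-hand side of \eqref{eq:identity_bound_k} equals
\[
-\frac{r}{2}|w|^2 - r P^2 + r P|w|.
\]
Next I would expand the first candidate on the right. Writing $a|v|^2 - a|v^*|^2 = a(2P - |w|)|w| = 2aP|w| - a|w|^2$ and expanding $r(P - \alpha_2|w|)^2$, I collect the coefficients of $P^2$, $P|w|$, and $|w|^2$, obtaining three scalar conditions. The $P^2$-coefficient matches automatically (both are $-r$). The $P|w|$-coefficient forces $2a + 2r\alpha_2 = r$, which is exactly \eqref{alpha_2}. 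Substituting this value, the $|w|^2$-coefficient reduces to the single requirement $a + \alpha_1 + r\alpha_2^2 = r/2$; a short computation, using $(r-2a)^2 + (r^2 - 4a^2) = 2r(r-2a)$, shows this holds precisely when $\alpha_1$ is given by \eqref{alpha_1}. This establishes the first equality.

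For the second equality I would invoke symmetry rather than repeat the work. The left-hand side of \eqref{eq:identity_bound_k} is invariant under the exchange $v \leftrightarrow v^*$, since $|v - v^*|$ and $(|v|^2 - |v^*|^2)^2$ are both symmetric, while $\alpha_1$ and $\alpha_2$ depend only on $a$ and $\rho$. Applying this exchange to the already-verified first expression negates $a|v|^2 - a|v^*|^2$, fixes $\alpha_1|v-v^*|^2$, and, since $(v^*-v)\cdot v^* = -(v-v^*)\cdot v^*$, turns the squared term into $r\bigl(\tfrac{(v-v^*)\cdot v^*}{|v-v^*|} + \alpha_2|v-v^*|\bigr)^2$; this is exactly the third line of \eqref{eq:identity_bound_k}. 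Hence the second equality follows for free.

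The argument carries no genuine obstacle: it is a coefficient-matching computation whose only delicate point is the bookkeeping for the $|w|^2$-coefficient and the verification of the closed form for $\alpha_1$. The single idea that makes everything transparent is the linearization turning $\tfrac{|v|^2 - |v^*|^2}{|v-v^*|}$ into $2P - |w|$, which converts a rational expression in $v, v^*$ into a quadratic in the two scalars $P$ and $|w|$.
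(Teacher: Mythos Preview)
Your proof is correct and follows essentially the same route as the paper: the same linearization $\frac{|v|^2-|v^*|^2}{|v-v^*|}=2P-|w|$ reduces everything to a quadratic in $P$ and $|w|$, after which the algebra (whether framed as completing the square, as in the paper, or as coefficient-matching, as you do) is forced. Your use of the $v\leftrightarrow v^*$ symmetry to deduce the second equality is a tidy shortcut compared to the paper's parallel computation via $|v|^2=|v-v^*|^2+2(v-v^*)\cdot v^*+|v^*|^2$, but the content is the same.
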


\begin{proof}
We start from the following identity:
\[
|v^*|^2 = |v - v^*|^2 - 2 (v - v^*) \cdot v + |v|^2.
\]
Then, we have
\[
a (|v|^2 + |v - v^*|^2 - 2 (v - v^*) \cdot v - |v^*|^2) = 0
\]
for all $a \in \R$ and
\[
\frac{|v|^2 - |v^*|^2}{|v - v^*|} = 2 \frac{(v - v^*) \cdot v}{|v - v^*|} - |v - v^*|.
\]
Hence, we have
\begin{align*}
&-\frac{1-\rho}{4} \left( |v - v^*|^2 + \left( \frac{|v|^2 - |v^*|^2}{|v - v^*|} \right)^2 \right) \\
=& a |v|^2 -\frac{1}{2}(1-\rho-2a) |v - v^*|^2 -(1-\rho)\left(\frac{(v - v^*) \cdot v}{|v - v^*|} \right)^2\\ 
&+ (1-\rho-2a) (v - v^*) \cdot v - a |v^*|^2\\
=& a |v|^2 - \alpha_{1, a, \rho} |v - v^*|^2 -(1-\rho) \left( \frac{(v - v^*) \cdot v}{|v - v^*|} - \alpha_{2, a, \rho} |v - v^*| \right)^2 - a |v^*|^2.
\end{align*}
Therefore, the first identity in \eqref{eq:identity_bound_k} is proved. The second identity is proved in the same way with the identity:
\[
|v|^2 = |v - v^*|^2 + 2 (v - v^*) \cdot v^* + |v^*|^2.
\]
\end{proof}

\begin{corollary} \label{cor:est_bound_k}
Let $a \in \R$ and $0 < \rho < 1$. Then we have
\[
|k(v, v^*)| \lesssim |v - v^*|^{-1} e^{-a |v|^2} e^{-\a_{1, a, \rho} |v - v^*|^2} e^{a |v^*|^2}, 
\]
where $\a_{1, a, \rho}$ is the constant defined by \eqref{alpha_1}.
\end{corollary}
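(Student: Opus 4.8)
The plan is to read off the claimed bound directly from the pointwise kernel estimate of \textbf{Assumption A} combined with the algebraic identity of Lemma~\ref{lem:identity_bound_k}. First, by \eqref{AB} (equivalently its reduced form \eqref{AB'}), together with the trivial bound $(1+|v|+|v^*|)^{-(1-\gamma)} \le 1$ valid for $0 \le \gamma \le 1$, we have
\[
|k(v,v^*)| \lesssim \frac{1}{|v-v^*|}\, E_\rho(v,v^*),
\]
so the task reduces to estimating the exponential factor $E_\rho(v,v^*)$ by the right-hand side of the claim.

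The key step is to rewrite the exponent defining $E_\rho$ using the \emph{second} identity in \eqref{eq:identity_bound_k}, which expresses
\[
-\frac{1-\rho}{4}\left(|v-v^*|^2 + \left(\frac{|v|^2-|v^*|^2}{|v-v^*|}\right)^2\right) = -a|v|^2 - \alpha_{1,a,\rho}|v-v^*|^2 - (1-\rho)\left(\frac{(v-v^*)\cdot v^*}{|v-v^*|} + \alpha_{2,a,\rho}|v-v^*|\right)^2 + a|v^*|^2.
\]
This form already carries exactly the signs $-a|v|^2$ and $+a|v^*|^2$ demanded by the statement, with the coefficient of $|v-v^*|^2$ equal to the constant $\alpha_{1,a,\rho}$ defined in \eqref{alpha_1}. (Alternatively, one may substitute $-a$ for $a$ into the first identity and use that $\alpha_{1,a,\rho}$ is even in $a$, since $(1-\rho+2a)(1-\rho-2a) = (1-\rho)^2 - 4a^2$; this produces the same placement of factors.)

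Finally, because $0 < \rho < 1$, the squared term comes with a nonpositive coefficient, so its exponential is bounded by $1$ and may be discarded. Exponentiating the identity and dropping this factor yields
\[
E_\rho(v,v^*) \le e^{-a|v|^2}\,e^{-\alpha_{1,a,\rho}|v-v^*|^2}\,e^{a|v^*|^2},
\]
and multiplying through by $|v-v^*|^{-1}$ gives the stated bound. There is no genuine analytic difficulty here; the only point requiring attention is the choice of identity (equivalently, the sign of the parameter) that produces the correct placement of the factors $e^{-a|v|^2}$ and $e^{a|v^*|^2}$, after which the estimate follows immediately by discarding the manifestly nonpositive square.
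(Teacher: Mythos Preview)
Your proof is correct and follows essentially the same route as the paper: apply the kernel bound \eqref{AB'}, invoke the identity \eqref{eq:identity_bound_k} from Lemma~\ref{lem:identity_bound_k}, and discard the nonpositive squared term. The only cosmetic difference is that the paper drops the square involving $(v-v^*)\cdot v$ (the first identity), whereas you use the second identity involving $(v-v^*)\cdot v^*$; as you yourself note, these are equivalent up to replacing $a$ by $-a$ and using that $\alpha_{1,a,\rho}$ is even in $a$.
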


\begin{proof}
The estimate follows from the identity \eqref{eq:identity_bound_k} and the estimate
\[
e^{-(1 - \rho) \left( \frac{(v - v^*) \cdot v}{|v - v^*|} - \a_{2, a, \rho} |v - v^*| \right)^2} \leq 1.
\]
\end{proof}

Based on the identity \eqref{eq:identity_bound_k} and the argument in \cite{Caf 1}, we have the following estimate.

\begin{lemma} \label{lem:est_E_alpha}
Let $\mu_1$ and $\mu_2$ be two real numbers such that $0 \leq \mu_1 < 3$ and $\mu_2 > 0$. Also, let $0 < \rho < 1$. Then, for any $- \mu_2 (1 - \rho)/2 < a < \mu_2 (1 - \rho)/2$, we have
\[
\int_{\R^3} \frac{1}{|v - v^*|^{\mu_1}} E_\rho(v, v^*)^{\mu_2} e^{a |v^*|^2}\,dv^* \lesssim (1 + |v|)^{-1} e^{a |v|^2},
\]
where $E_\rho$ is the function defined by \eqref{E}.
\end{lemma}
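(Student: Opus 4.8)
The plan is to prove Lemma \ref{lem:est_E_alpha} by reducing the weighted integral to a form where the exponential identity of Lemma \ref{lem:identity_bound_k} produces a Gaussian in $v - v^*$ that can be integrated out, leaving a singular integral of the type $\int |v - v^*|^{-\mu_1} e^{-c|v-v^*|^2}\,dv^*$. First I would use the second identity in \eqref{eq:identity_bound_k}, raised to the power $\mu_2$, to rewrite
\[
E_\rho(v, v^*)^{\mu_2} e^{a|v^*|^2} = e^{-a|v|^2} e^{-\mu_2 \a_{1, a/\mu_2, \rho} |v - v^*|^2} e^{-\mu_2(1-\rho)\left( \frac{(v - v^*)\cdot v^*}{|v - v^*|} + \a_{2, a/\mu_2, \rho}|v - v^*| \right)^2} e^{2a|v^*|^2}.
\]
Wait---I must be careful about which variable carries the leftover weight. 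The cleaner route is to apply the identity with exponent parameter $a' := a/\mu_2$ so that after multiplying by $e^{a|v^*|^2}$ the free Gaussian weight $e^{a'|v|^2}$ on $v$ factors out of the $v^*$-integral, giving precisely the target factor $e^{a|v|^2}$.

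The key steps, in order, are as follows. (1) Verify the admissibility condition: the identity requires $\a_{1, a', \rho} > 0$, which by \eqref{alpha_1} holds exactly when $|2a'| < 1 - \rho$, i.e. $|a| < \mu_2(1-\rho)/2$, matching the hypothesis on $a$. (2) Extract the factor $e^{a|v|^2}$ and bound the squared-bracket exponential by $1$ (as in Corollary \ref{cor:est_bound_k}), reducing the problem to estimating
\[
\int_{\R^3} \frac{1}{|v - v^*|^{\mu_1}} e^{-\mu_2 \a_{1, a', \rho} |v - v^*|^2}\,dv^*.
\]
(3) Change variables to $w = v - v^*$ and split the integral into the region $|w| \leq 1$, where the singularity $|w|^{-\mu_1}$ is integrable precisely because $\mu_1 < 3$ (spherical coordinates give $\int_0^1 r^{2 - \mu_1}\,dr < \infty$), and the region $|w| > 1$, where the Gaussian decay controls everything. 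This yields a bound uniform in $v$ of the form $\lesssim e^{a|v|^2}$.

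The remaining point is the extra decay factor $(1 + |v|)^{-1}$, which does \emph{not} follow from the crude bound above since the reduced integral is already uniformly bounded in $v$. To recover it I would follow the more careful argument of \cite{Caf 1}: rather than discarding the angular Gaussian $e^{-\mu_2(1-\rho)(\cdots)^2}$, retain it and perform the $v^*$-integration in coordinates adapted to $v$, splitting $w = v - v^*$ into its component along $\hat v$ and the perpendicular component. For large $|v|$, the combination of the radial Gaussian in $|w|$ and the angular Gaussian confines $w$ to a thin slab whose volume contributes the gain of one power of $|v|^{-1}$; concretely, one integrates the perpendicular (two-dimensional) variable against the Gaussian and the along-$\hat v$ variable against the singular weight, and the cross term in the exponent forces an effective width $\sim |v|^{-1}$. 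This extraction of the sharp $(1 + |v|)^{-1}$ decay is the main obstacle, since the other steps are routine once the right coordinates are fixed; I expect to treat $|v| \leq 1$ separately (where $(1+|v|)^{-1}$ is comparable to a constant and step (3) suffices) and reserve the slab estimate for $|v| > 1$.
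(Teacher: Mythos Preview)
Your proposal is correct and follows essentially the same route as the paper: apply the first identity of Lemma~\ref{lem:identity_bound_k} with parameter $a' = a/\mu_2$ (so that, after raising to the $\mu_2$ power, the weight $e^{a|v^*|^2}$ cancels and $e^{a|v|^2}$ factors out), check that $\mu_2\,\alpha_{1,a/\mu_2,\rho} > 0$ exactly under the stated hypothesis on $a$, and then retain the angular Gaussian to extract the $(1+|v|)^{-1}$ gain. The paper makes your ``thin slab'' heuristic concrete by passing to spherical coordinates $v^* = v + r\omega$, $t = \omega\cdot\hat v$: the $t$-integral $\int_{-1}^1 e^{-\mu_2(1-\rho)(|v|t - \alpha_2' r)^2}\,dt$ is bounded by $2$ for $|v|\le 1$ and by $C|v|^{-1}$ for $|v|>1$ (via the substitution $s = |v|t - \alpha_2' r$), after which the remaining $r$-integral $\int_0^\infty r^{2-\mu_1} e^{-\alpha_1' r^2}\,dr$ converges since $\mu_1 < 3$. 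Your Cartesian parallel/perpendicular description would need to be converted into essentially these coordinates to carry through cleanly, but the idea is the same. (Minor slip: the factored weight on $v$ is $e^{\mu_2 a'|v|^2} = e^{a|v|^2}$, not $e^{a'|v|^2}$.)
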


\begin{proof}
By Lemma \ref{lem:identity_bound_k}, we have
\begin{align*}
&\frac{1}{|v - v^*|^{\mu_1}} E_\rho(v, v^*)^{\mu_2} e^{a |v^*|^2}\\
=& \frac{1}{|v - v^*|^{\mu_1}} e^{a|v|^2} e^{- \alpha_{1, a, \rho}' |v - v^*|^2 - \mu_2 (1 - \rho) \left( \frac{(v - v^*) \cdot v}{|v - v^*|} - \alpha_{2, a, \rho}' |v - v^*| \right)^2},
\end{align*}
where
\begin{align*}
\a_{1, a, \rho}' :=& \mu_2 \a_{1, a/\mu_2, \rho} = \frac{(\mu_2(1 - \rho) + 2a)(\mu_2(1 - \rho) - 2a)}{4 \mu_2 (1 - \rho)},\\
\a_{2, a, \rho}' :=& \alpha_{2, a/\mu_2, \rho} = \frac{\mu_2(1 - \rho) - 2a}{2\mu_2(1 - \rho)}.
\end{align*}
Notice that the constant $\a_{1, a, \rho}'$ is positive as long as $- \mu_2 (1 - \rho)/2 < a < \mu_2 (1 - \rho)/2$. Thus, we have
\begin{align*}
&\int_{\R^3} \frac{1}{|v - v^*|^{\mu_1}} E_\rho(v, v^*)^{\mu_2} e^{a |v^*|^2}\,dv^*\\
=& e^{a|v|^2} \int_{\R^3} \frac{1}{|v - v^*|^{\mu_1}} e^{- \alpha_{1, a, \rho}' |v - v^*|^2 - \mu_2 (1 - \rho) \left( \frac{(v - v^*) \cdot v}{|v - v^*|} - \alpha_{2, a, \rho}' |v - v^*| \right)^2}\,dv^*.
\end{align*}

Now we shall prove that
\[
\int_{\R^3} \frac{1}{|v - v^*|^{\mu_1}} e^{- \alpha_{1, a, \rho}' |v - v^*|^2 - \mu_2 (1 - \rho) \left( \frac{(v - v^*) \cdot v}{|v - v^*|} - \alpha_{2, a, \rho}' |v - v^*| \right)^2}\,dv^* \lesssim (1 + |v|)^{-1}.
\]
We introduce the spherical coordinates: $v^* = v + r \omega$ for $r > 0$ and $\omega \in S^2$. Since the integrand depends only on $r$ and $t := \omega \cdot v/|v|$, we have,
\[
\begin{split}
&\int_{\R^3} \frac{1}{|v - v^*|^{\mu_1}} e^{- \alpha_{1, a, \rho}' |v - v^*|^2 - \mu_2 (1 - \rho) \left( \frac{(v - v^*) \cdot v}{|v - v^*|} - \alpha_{2, a, \rho}' |v - v^*| \right)^2}\,dv^*\\ 
=& 2 \pi \int_0^\infty \int_{-1}^1 r^{2 - \mu_1} e^{- \alpha_{1, a, \rho}' r^2 - \mu_2 (1 - \rho) \left( |v| t - \alpha_{2, a, \rho}' r \right)^2}\,dtdr.
\end{split}
\]

We estimate the $t$-integral. When $0 < |v| \leq 1$, we have
\[
\int_{-1}^1 e^{- \mu_2 (1 - \rho) \left( |v| t - \alpha_{2, a, \rho}' r \right)^2}\,dt \leq 2.
\]
On the other hand, for $|v| > 1$, let $s := |v|t - \a_{2, a, \rho}'r$. Then, we have
\[
\begin{split}
\int_{-1}^1 e^{- \mu_2 (1 - \rho) \left( |v| t - \alpha_{2, a, \rho}' r \right)^2}\,dt =& \frac{1}{|v|} \int_{-|v| - \a_{2, a, \rho}'}^{|v| - \a_{2, a, \rho}'} e^{- \mu_2 (1 - \rho) s^2}\,ds\\
\leq& \frac{1}{|v|} \int_{-\infty}^{\infty} e^{- \mu_2 (1 - \rho) s^2}\,ds\\
\lesssim& \frac{1}{|v|}.
\end{split}
\]
Thus, we have
\[
\int_{-1}^1 e^{- \mu_2 (1 - \rho) \left( |v| t - \alpha_{2, a, \rho}' r \right)^2}\,dt \lesssim (1 + |v|)^{-1}.
\]
Since $0 \leq \mu_1 < 3$, we have
\begin{align*}
&\int_{\R^3} \frac{1}{|v - v^*|^{\mu_1}} e^{- \alpha_{1, a, \rho}' |v - v^*|^2 - \mu_2 (1 - \rho) \left( \frac{(v - v^*) \cdot v}{|v - v^*|} - \alpha_{2, a, \rho}' |v - v^*| \right)^2}\,dv^*\\ 
\lesssim& (1 + |v|)^{-1} \int_0^\infty r^{2 - \mu_1} e^{- \alpha_{1, a, \rho}' r^2}\,dr\\
\lesssim& (1 + |v|)^{-1}.
\end{align*}
This completes the proof.
\end{proof}

Due to the assumptions \eqref{AB'} and \eqref{AC'}, we have the following estimates.

\begin{corollary} \label{cor:est_K_alpha}
Let $0 < \mu < 3$ and $0 < \rho < 1$. Then, for any $-\mu(1 - \rho)/2 < a < \mu(1 - \rho)/2$, we have
\[
\int_{\R^3} |k(v, v^*)|^\mu e^{a |v^*|^2}\,dv^* \lesssim (1 + |v|)^{-1} e^{a |v|^2}.
\]
\end{corollary}

\begin{corollary} \label{cor:est_Kdv_alpha}
Let $0 < \mu < 3/2$ and $0 < \rho < 1$. Then, for any $-\mu(1 - \rho)/2 < a < \mu(1 - \rho)/2$, we have
\[
\int_{\R^3} |\nabla_v k(v, v^*)|^\mu e^{a |v^*|^2}\,dv^* \lesssim (1 + |v|)^{\mu - 1} e^{a |v|^2}.
\]
\end{corollary}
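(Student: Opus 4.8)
The plan is to reduce this statement directly to Lemma \ref{lem:est_E_alpha}, exactly as Corollary \ref{cor:est_K_alpha} does, the only change being the exponent on $|v - v^*|$ coming from the bound on $\nabla_v k$ rather than on $k$ itself.

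First I would invoke assumption \eqref{AC'}, which gives the pointwise bound
\[
|\nabla_v k(v, v^*)| \lesssim \frac{1 + |v|}{|v - v^*|^2} E_\rho(v, v^*).
\]
Raising this to the power $\mu$ and pulling the factor $(1 + |v|)^\mu$ out of the $v^*$-integral, the task becomes the estimate
\[
(1 + |v|)^\mu \int_{\R^3} \frac{1}{|v - v^*|^{2\mu}} E_\rho(v, v^*)^\mu e^{a |v^*|^2}\,dv^* \lesssim (1 + |v|)^{\mu - 1} e^{a |v|^2}.
\]

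Next I would apply Lemma \ref{lem:est_E_alpha} with the choice $\mu_1 = 2\mu$ and $\mu_2 = \mu$. The hypothesis $\mu_2 > 0$ holds since $\mu > 0$, and the admissible range for $a$ in the lemma, namely $-\mu_2(1 - \rho)/2 < a < \mu_2(1 - \rho)/2$, coincides precisely with the stated range $-\mu(1 - \rho)/2 < a < \mu(1 - \rho)/2$. The lemma then yields
\[
\int_{\R^3} \frac{1}{|v - v^*|^{2\mu}} E_\rho(v, v^*)^\mu e^{a |v^*|^2}\,dv^* \lesssim (1 + |v|)^{-1} e^{a |v|^2},
\]
and multiplying through by $(1 + |v|)^\mu$ produces the desired factor $(1 + |v|)^{\mu - 1} e^{a|v|^2}$, completing the argument.

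There is no genuine obstacle here; the proof is a one-line corollary of Lemma \ref{lem:est_E_alpha}. The only point that deserves attention is the constraint $\mu_1 = 2\mu < 3$ required by the hypothesis $0 \leq \mu_1 < 3$ of Lemma \ref{lem:est_E_alpha}: this is exactly what forces the range of $\mu$ to shrink to $0 < \mu < 3/2$, in contrast to the range $0 < \mu < 3$ available in Corollary \ref{cor:est_K_alpha}, where the kernel bound carries only $|v - v^*|^{-1}$. The extra power of $(1 + |v|)$ in \eqref{AC'} is what converts the decay $(1 + |v|)^{-1}$ coming out of the lemma into the growth exponent $\mu - 1$ on the right-hand side.
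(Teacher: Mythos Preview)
Your argument is correct and is precisely the approach the paper intends: the corollary is stated immediately after Lemma \ref{lem:est_E_alpha} with the remark that it follows from assumption \eqref{AC'}, and your application of that lemma with $\mu_1 = 2\mu$, $\mu_2 = \mu$ is exactly the intended reduction. Your observation that the constraint $2\mu < 3$ is what forces $\mu < 3/2$ is also the right explanation for the restricted range.
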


For the $v^*$ variable, we have the following estimate.

\begin{lemma} \label{lem:est_K_alpha*}
Let $\mu_1$, $\mu_2$ and $\mu_3$ be three real numbers such that $\mu_1 \geq 0$, $\mu_2 \geq 0$, $\mu_3 > 0$ and $\mu_1 + \mu_2 < 3$. Also, let $0 < \rho < 1$. Then, for any $- \mu_3 (1 - \rho)/2 < a < \mu_3 (1 - \rho)/2$, we have
\[
\int_{\R^3} \frac{1}{|v|^{\mu_1}} \frac{1}{|v - v^*|^{\mu_2}} E_\rho(v, v^*)^{\mu_3} e^{a |v|^2}\,dv \lesssim e^{a |v^*|^2}.
\]
\end{lemma}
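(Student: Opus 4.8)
The plan is to mirror the proof of Lemma \ref{lem:est_E_alpha}, but now using the \emph{second} identity in Lemma \ref{lem:identity_bound_k} so as to transfer the Gaussian weight from $v$ to $v^*$. Concretely, I would apply that identity with the parameter $a$ replaced by $b := a/\mu_3$ and raise the resulting identity to the power $\mu_3$, which rewrites
\[
E_\rho(v, v^*)^{\mu_3} e^{a |v|^2} = e^{a |v^*|^2}\, e^{- \mu_3 \alpha_{1, b, \rho} |v - v^*|^2}\, e^{- \mu_3 (1 - \rho) \left( \frac{(v - v^*) \cdot v^*}{|v - v^*|} + \alpha_{2, b, \rho} |v - v^*| \right)^2}.
\]
The choice $b = a/\mu_3$ is exactly what makes the $e^{-\mu_3 b |v|^2}$ factor cancel the $e^{a |v|^2}$ weight while producing the clean outer factor $e^{a|v^*|^2}$. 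The hypothesis $-\mu_3(1 - \rho)/2 < a < \mu_3(1 - \rho)/2$ is equivalent to $-(1-\rho)/2 < b < (1-\rho)/2$, which by \eqref{alpha_1} guarantees $c := \mu_3 \alpha_{1, b, \rho} > 0$.

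After discarding the last exponential factor (which is $\le 1$) and pulling out $e^{a|v^*|^2}$, the claim reduces to the $v^*$-uniform bound
\[
I(v^*) := \int_{\R^3} \frac{1}{|v|^{\mu_1}} \frac{1}{|v - v^*|^{\mu_2}} e^{- c |v - v^*|^2}\,dv \lesssim 1.
\]
I would estimate $I(v^*)$ by splitting $\R^3$ according to the relative sizes of $|v|$, $|v - v^*|$ and $|v^*|$: a region near the origin ($|v| \le |v^*|/2$), a region near $v^*$ ($|v - v^*| \le |v^*|/2$), and the complementary region. In the first region $|v - v^*| \ge |v^*|/2$, so the Gaussian contributes $e^{-c|v^*|^2/4}$ and the remaining integral of $|v|^{-\mu_1}$ over a ball of radius $|v^*|/2$ is $\lesssim |v^*|^{3 - \mu_1}$ (using $\mu_1 < 3$), giving $\lesssim |v^*|^{3 - \mu_1 - \mu_2} e^{-c|v^*|^2/4}$. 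In the second region $|v| \ge |v^*|/2$, and $\int_{|w| \le |v^*|/2} |w|^{-\mu_2} e^{-c|w|^2}\,dw \lesssim \min\{ |v^*|^{3 - \mu_2}, 1 \}$ (using $\mu_2 < 3$ together with the Gaussian), so this contributes $\lesssim |v^*|^{-\mu_1}\min\{ |v^*|^{3 - \mu_2}, 1 \}$. In the complementary region, for $|v^*| \gtrsim 1$ one bounds $|v|^{-\mu_1} \lesssim |v^*|^{-\mu_1} \lesssim 1$ and integrates $|v - v^*|^{-\mu_2} e^{-c|v-v^*|^2}$ over all of $\R^3$; for $|v^*| \lesssim 1$ one splits once more at $|v| = 2|v^*|$, the inner shell contributing $\lesssim |v^*|^{3 - \mu_1 - \mu_2}$ and the outer part, where $|v - v^*| \gtrsim |v|$, contributing $\lesssim \int_{\R^3} |v|^{-(\mu_1 + \mu_2)} e^{-c|v|^2/4}\,dv < \infty$ by $\mu_1 + \mu_2 < 3$.

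The point of this case analysis is that every contribution is dominated by a bounded function of $|v^*|$: terms of the form $|v^*|^{3 - \mu_1 - \mu_2}e^{-c|v^*|^2/4}$ or $|v^*|^{-\mu_1}e^{-c|v^*|^2/4}$, or a fixed convergent integral, each uniformly bounded on $(0,\infty)$ since $3 - \mu_1 - \mu_2 > 0$. The main obstacle is precisely the behaviour as $v^* \to 0$, where the two singularities $|v|^{-\mu_1}$ and $|v - v^*|^{-\mu_2}$ coalesce at the origin and the integrand behaves like $|v|^{-(\mu_1 + \mu_2)}$; this is the only place where the combined condition $\mu_1 + \mu_2 < 3$ (rather than merely $\mu_1, \mu_2 < 3$ separately) is genuinely used, and it is what keeps $I(v^*)$ finite and uniformly bounded there. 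The rest is routine bookkeeping of constants, which I would not carry out in detail.
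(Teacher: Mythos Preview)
Your proposal is correct. The first half---applying the second identity of Lemma \ref{lem:identity_bound_k} with $b = a/\mu_3$ to pull out $e^{a|v^*|^2}$ and drop the remaining squared exponential---is exactly what the paper does (with its notation $\alpha_{1,a,\rho}'' = \mu_3 \alpha_{1,a/\mu_3,\rho}$). The difference lies in how the residual integral $I(v^*) = \int_{\R^3} |v|^{-\mu_1} |v-v^*|^{-\mu_2} e^{-c|v-v^*|^2}\,dv$ is bounded. The paper uses a single clean dichotomy $\{|v| \le |v-v^*|\}$ versus $\{|v| \ge |v-v^*|\}$: on the first piece $|v-v^*|^{-\mu_2} \le |v|^{-\mu_2}$ and $e^{-c|v-v^*|^2} \le e^{-c|v|^2}$, on the second $|v|^{-\mu_1} \le |v-v^*|^{-\mu_1}$, and in both cases the integral collapses to $\int_{\R^3} |w|^{-(\mu_1+\mu_2)} e^{-c|w|^2}\,dw < \infty$. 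Your three-region split relative to $|v^*|$ with further sub-cases for small and large $|v^*|$ also works, and correctly isolates the coalescence of singularities as $v^* \to 0$ as the place where $\mu_1 + \mu_2 < 3$ matters, but it is considerably more laborious than the paper's two-line argument. The paper's splitting never mentions $|v^*|$ at all, which is what makes it so short.
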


\begin{proof}
By Lemma \ref{lem:identity_bound_k}, we have
\begin{align*}
&\frac{1}{|v|^{\mu_1}} \frac{1}{|v - v^*|^{\mu_2}} E_\rho(v, v^*)^{\mu_3} e^{a |v|^2}\\
=&\frac{1}{|v|^{\mu_1}} \frac{1}{|v - v^*|^{\mu_2}} e^{a|v^*|^2} e^{- \alpha_{1, a, \rho}'' |v - v^*|^2 - \mu_3 (1 - \rho) \left( \frac{(v - v^*) \cdot v}{|v - v^*|} + \alpha_{2, a, \rho}'' |v - v^*| \right)^2},
\end{align*}
where
\begin{align*}
\a_{1, a, \rho}'' :=& \mu_3 \a_{1, a/\mu_3, \rho} = \frac{(\mu_3(1 - \rho) + 2a)(\mu_3(1 - \rho) - 2a)}{4\mu_3(1 - \rho)},\\
\a_{2, a, \rho}'' :=& \alpha_{2, a/\mu_3, \rho} = \frac{\mu_3(1 - \rho) - 2a}{2\mu_3(1 - \rho)}.
\end{align*}
Notice that the constant $\a_{1, a, \rho}''$ is positive as long as $- \mu_3 (1 - \rho)/2 < a < \mu_3 (1 - \rho)/2$. Thus, we have
\begin{align*}
&\int_{\R^3} \frac{1}{|v|^{\mu_1}} \frac{1}{|v - v^*|^{\mu_2}} E_\rho(v, v^*)^{\mu_3} e^{a |v|^2}\,dv\\
=& e^{a|v^*|^2} \int_{\R^3} \frac{1}{|v|^{\mu_1}} \frac{1}{|v - v^*|^{\mu_2}} e^{- \alpha_{1, a, \rho}'' |v - v^*|^2 - \mu_3 (1 - \rho) \left( \frac{(v - v^*) \cdot v}{|v - v^*|} + \alpha_{2, a, \rho}'' |v - v^*| \right)^2}\,dv^*\\
\leq& e^{a|v^*|^2} \int_{\{|v| \leq |v - v^*| \}} \frac{1}{|v|^{\mu_1}} \frac{1}{|v - v^*|^{\mu_2}} e^{- \alpha_{1, a, \rho}'' |v - v^*|^2}\,dv\\ 
&+ e^{a|v^*|^2} \int_{\{|v| \geq |v - v^*| \}} \frac{1}{|v|^{\mu_1}} \frac{1}{|v - v^*|^{\mu_2}} e^{- \alpha_{1, a, \rho}'' |v - v^*|^2}\,dv\\
\leq& e^{a|v^*|^2} \int_{\{|v| \leq |v - v^*| \}} \frac{1}{|v|^{\mu_1 + \mu_2}} e^{- \alpha_{1, a, \rho}'' |v|^2}\,dv\\ 
&+ e^{a|v^*|^2} \int_{\{|v| \geq |v - v^*| \}} \frac{1}{|v - v^*|^{\mu_1 + \mu_2}} e^{- \alpha_{1, a, \rho}'' |v - v^*|^2}\,dv\\
\lesssim& e^{a|v^*|^2}.
\end{align*}
This completes the proof.
\end{proof}

In the same fashion, we obtain the following estimates.

\begin{corollary} \label{cor:est_K_alpha*}
Let $\mu_1$ and $\mu_2$ be two real numbers such that $\mu_1 \geq 0$, $\mu_2 > 0$ and $\mu_1+ \mu_2 < 3$. Assume $0 < \rho < 1$. Then, for any $-\mu_2(1 - \rho)/2 < a < \mu_2(1 - \rho)/2$, we have
\[
\int_{\R^3} \frac{1}{|v|^{\mu_1}} |k(v, v^*)|^{\mu_2} e^{a |v|^2}\,dv \lesssim e^{a |v^*|^2}
\]
and
\[
\int_{\R^3} \frac{1}{|v^*|^{\mu_1}} |k(v, v^*)|^{\mu_2} e^{a |v^*|^2}\,dv^* \lesssim e^{a |v|^2}.
\]
\end{corollary}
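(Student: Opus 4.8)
The plan is to reduce both inequalities to Lemma \ref{lem:est_K_alpha*}, which already controls the integral of the model kernel $|v|^{-\mu_1} |v - v^*|^{-\mu_2} E_\rho(v, v^*)^{\mu_3}$ against the Gaussian weight; the corollary is then obtained by bounding the true kernel $k$ pointwise and identifying the parameters.

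First I would invoke the kernel bound \eqref{AB'}, namely $|k(v, v^*)| \lesssim |v - v^*|^{-1} E_\rho(v, v^*)$, and raise it to the power $\mu_2 > 0$ (which preserves the inequality since both sides are nonnegative) to obtain
\[
|k(v, v^*)|^{\mu_2} \lesssim \frac{1}{|v - v^*|^{\mu_2}} E_\rho(v, v^*)^{\mu_2}.
\]
Substituting this into the first integral of the corollary produces exactly the integrand appearing in Lemma \ref{lem:est_K_alpha*} with the choice $\mu_3 = \mu_2$, the parameters $\mu_1$ and $\mu_2$ being carried over unchanged. One then checks that the hypotheses of the lemma are met: $\mu_1 \geq 0$, $\mu_2 \geq 0$, $\mu_3 = \mu_2 > 0$, and $\mu_1 + \mu_2 < 3$, while the admissible range for $a$ becomes $-\mu_2 (1 - \rho)/2 < a < \mu_2 (1 - \rho)/2$, which is precisely the range assumed in the corollary. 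Applying the lemma then yields the first estimate, $\lesssim e^{a |v^*|^2}$.

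For the second estimate I would exploit the symmetry of the model bound. The factor $|v - v^*|$ is symmetric in its two arguments, and $E_\rho(v, v^*)$ depends only on $|v - v^*|$ and $(|v|^2 - |v^*|^2)^2$, so $E_\rho(v, v^*) = E_\rho(v^*, v)$; hence the bound on $|k|^{\mu_2}$ is invariant under interchanging $v$ and $v^*$. Consequently Lemma \ref{lem:est_K_alpha*} remains valid after swapping the roles of $v$ and $v^*$, and the same pointwise substitution applied to the second integral gives the claimed bound $\lesssim e^{a |v|^2}$.

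Since every step is a direct application or an immediate symmetry argument, I do not anticipate a genuine obstacle; the only point requiring care is verifying that the parameter identification $\mu_3 = \mu_2$ keeps $a$ inside the interval demanded by Lemma \ref{lem:est_K_alpha*}, which it does by construction.
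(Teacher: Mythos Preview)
Your proposal is correct and matches the paper's approach: the paper simply writes ``In the same fashion, we obtain the following estimates,'' meaning one applies the pointwise bound \eqref{AB'} to reduce to Lemma \ref{lem:est_K_alpha*} with $\mu_3 = \mu_2$, exactly as you do, and the second inequality follows by the symmetry of $|v-v^*|$ and $E_\rho$ that you identified.
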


\begin{corollary} \label{cor:est_Kdv_alpha*}
Let $\mu_1$ and $\mu_2$ be two real numbers such that $\mu_1 \geq \mu_2 > 0$ and $\mu_1+ 2\mu_2 < 3$. Also, let $0 < \rho < 1$. Then, for any $a$ satisfying $-\mu_2(1 - \rho)/2 < a < \mu_2(1 - \rho)/2$, we have
\[
\int_{\R^3} \frac{1}{|v|^{\mu_1}} |\nabla_v k(v, v^*)|^{\mu_2} e^{a |v|^2}\,dv \lesssim (1 + |v^*|)^{\mu_2 - 1} e^{a |v^*|^2}.
\]
\end{corollary}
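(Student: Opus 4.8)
The plan is to reduce the statement, via the kernel bound \eqref{AC'}, to the weighted integral
\[
I(v^*) := \int_{\R^3} \frac{(1 + |v|)^{\mu_2}}{|v|^{\mu_1} |v - v^*|^{2\mu_2}} E_\rho(v, v^*)^{\mu_2} e^{a|v|^2}\,dv,
\]
since $|\nabla_v k(v, v^*)|^{\mu_2} \lesssim (1 + |v|)^{\mu_2} |v - v^*|^{-2\mu_2} E_\rho(v, v^*)^{\mu_2}$, and to show $I(v^*) \lesssim (1 + |v^*|)^{\mu_2 - 1} e^{a|v^*|^2}$. As in the proof of Lemma \ref{lem:est_K_alpha*}, I would invoke the second identity in Lemma \ref{lem:identity_bound_k} to write
\[
E_\rho(v, v^*)^{\mu_2} e^{a|v|^2} = e^{a|v^*|^2}\, e^{-\alpha_{1, a, \rho}'' |v - v^*|^2}\, e^{-\mu_2(1 - \rho)\left( \frac{(v - v^*) \cdot v^*}{|v - v^*|} + \alpha_{2, a, \rho}'' |v - v^*| \right)^2},
\]
with the constants $\alpha_{1, a, \rho}'', \alpha_{2, a, \rho}''$ from that proof, where $\alpha_{1, a, \rho}'' > 0$ exactly because $-\mu_2(1 - \rho)/2 < a < \mu_2(1 - \rho)/2$. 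The essential difference from Lemma \ref{lem:est_K_alpha*} is that here one must produce the extra decay $(1 + |v^*|)^{-1}$: the admissible range forces $\mu_2 < 1$ (already $\mu_1 = \mu_2$ gives $3\mu_2 < 3$), so the target exponent $\mu_2 - 1$ is negative and the crude bound $I(v^*) \lesssim e^{a|v^*|^2}$ does not suffice. That gain must come from the angular Gaussian above, in the spirit of Lemma \ref{lem:est_E_alpha}.

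I would split $\R^3$ into $B := \{ |v| \leq |v - v^*| \}$ and $A := \{ |v| \geq |v - v^*| \}$, as in Lemma \ref{lem:est_K_alpha*}, the purpose being to isolate the singularity $|v|^{-\mu_1}$. On $B$ the gain is cheap: using $|v - v^*| \geq |v|$ to absorb $|v - v^*|^{-2\mu_2} \leq |v|^{-2\mu_2}$, discarding the angular Gaussian, and using $|v - v^*| \geq |v^*|/2$ to bound $e^{-\alpha_{1, a, \rho}'' |v - v^*|^2} \leq e^{-\frac{1}{8}\alpha_{1, a, \rho}'' |v^*|^2}\, e^{-\frac{1}{2}\alpha_{1, a, \rho}'' |v - v^*|^2}$, I reduce the $B$-contribution to $e^{a|v^*|^2} e^{-\frac{1}{8}\alpha_{1, a, \rho}'' |v^*|^2}$ times the integral of $(1 + |v|)^{\mu_2} |v|^{-(\mu_1 + 2\mu_2)} e^{-\frac{1}{2}\alpha_{1, a, \rho}'' |v - v^*|^2}$, which is $\lesssim (1 + |v^*|)^{\mu_2}$ (local integrability at the origin being guaranteed by $\mu_1 + 2\mu_2 < 3$). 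Since $e^{-c|v^*|^2}$ dominates any power of $(1 + |v^*|)$, the Gaussian prefactor turns $(1 + |v^*|)^{\mu_2}$ into the desired $(1 + |v^*|)^{\mu_2 - 1}$ with room to spare.

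The region $A$ is where the angular localization is genuinely used. There I replace $|v|^{-\mu_1} \leq |v - v^*|^{-\mu_1}$ and estimate $(1 + |v|)^{\mu_2} \lesssim (1 + |v^*|)^{\mu_2} + |v - v^*|^{\mu_2}$, then enlarge the domain back to all of $\R^3$, bounding the $A$-part by a constant multiple of
\[
(1 + |v^*|)^{\mu_2} \int_{\R^3} \frac{E_\rho^{\mu_2} e^{a|v|^2}}{|v - v^*|^{\mu_1 + 2\mu_2}}\,dv + \int_{\R^3} \frac{E_\rho^{\mu_2} e^{a|v|^2}}{|v - v^*|^{\mu_1 + \mu_2}}\,dv.
\]
After the substitution $v = v^* + r\omega$ with $r > 0$, $\omega \in S^2$, each integrand depends only on $r = |v - v^*|$ and $t := \omega \cdot v^*/|v^*|$, because $\frac{(v - v^*)\cdot v^*}{|v - v^*|} = |v^*| t$; hence, just as in Lemma \ref{lem:est_E_alpha}, each integral equals $e^{a|v^*|^2}$ times $2\pi \int_0^\infty \int_{-1}^1 r^{2 - s} e^{-\alpha_{1, a, \rho}'' r^2} e^{-\mu_2(1 - \rho)(|v^*| t + \alpha_{2, a, \rho}'' r)^2}\,dt\,dr$ with $s = \mu_1 + 2\mu_2$ and $s = \mu_1 + \mu_2$, respectively. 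The $t$-integral of the angular Gaussian over $[-1, 1]$ yields the decisive factor $\lesssim (1 + |v^*|)^{-1}$, while the $r$-integral converges since $2 - s > -1$, i.e. $\mu_1 + 2\mu_2 < 3$ (and a fortiori $\mu_1 + \mu_2 < 3$). Thus the first term is $\lesssim (1 + |v^*|)^{\mu_2 - 1} e^{a|v^*|^2}$ and the second is $\lesssim (1 + |v^*|)^{-1} e^{a|v^*|^2} \leq (1 + |v^*|)^{\mu_2 - 1} e^{a|v^*|^2}$ as $\mu_2 > 0$; combining $A$ and $B$ proves the claim.

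I expect the main obstacle to be precisely the production of the $(1 + |v^*|)^{-1}$ factor: one cannot discard the angular Gaussian (as Lemma \ref{lem:est_K_alpha*} does), yet the $|v|^{-\mu_1}$ singularity blocks a direct one-shot reduction to the $(r, t)$ integral. The region split resolves this tension, confining the singularity to $B$ — where crude Gaussian decay in $|v^*|$ is enough — and leaving $A$ in the clean form to which the angular computation of Lemma \ref{lem:est_E_alpha} applies; the only routine points are checking the elementary inequality for $(1 + |v|)^{\mu_2}$ and tracking that $\mu_1 + 2\mu_2 < 3$ governs both the origin integrability in $B$ and the radial convergence in $A$.
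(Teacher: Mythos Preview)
Your argument is correct and is precisely the kind of synthesis the paper has in mind when it writes ``In the same fashion, we obtain the following estimates'': you combine the region splitting $\{|v|\le |v-v^*|\}\cup\{|v|\ge |v-v^*|\}$ used in the proof of Lemma~\ref{lem:est_K_alpha*} (to isolate the $|v|^{-\mu_1}$ singularity) with the angular Gaussian computation of Lemma~\ref{lem:est_E_alpha} (to extract the factor $(1+|v^*|)^{-1}$), after reducing via \eqref{AC'} and the second identity of Lemma~\ref{lem:identity_bound_k}. Your observation that $\mu_1\ge\mu_2$ forces $\mu_2<1$, so that the subadditivity $(1+|v^*|+|v-v^*|)^{\mu_2}\le (1+|v^*|)^{\mu_2}+|v-v^*|^{\mu_2}$ holds and the target exponent $\mu_2-1$ is genuinely negative, is exactly the point; the paper leaves all of this implicit.
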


Noticing that 
\[
\frac{1 + |v|}{|v - v^*|^2} \leq \frac{1}{|v - v^*|} + \frac{1 + |v^*|}{|v - v^*|^2},
\]
and switching variables $v$ and $v^*$ in Corollary \ref{cor:est_Kdv_alpha}, we obtain the following estimate.

\begin{corollary} \label{cor:est_Kdv_alpha*2}
Let $0 < \mu < 3/2$ and $0 < \rho < 1$. Then, for any $-\mu(1 - \rho)/2 < a < \mu(1 - \rho)/2$, we have
\[
\int_{\R^3} |\nabla_v k(v, v^*)|^{\mu} e^{a |v|^2}\,dv \lesssim (1 + |v^*|)^{-1} e^{a |v^*|^2}.
\]
\end{corollary}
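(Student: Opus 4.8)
The plan is to bound the integrand pointwise by the Gaussian kernel $E_\rho$ and then reduce everything to the two ``master'' estimates already established, Lemma~\ref{lem:est_E_alpha} and Corollary~\ref{cor:est_Kdv_alpha}. Starting from \eqref{AC'}, we have
\[
|\nabla_v k(v, v^*)|^\mu \lesssim \left( \frac{1 + |v|}{|v - v^*|^2} \right)^\mu E_\rho(v, v^*)^\mu,
\]
so the essential difficulty is the factor $(1 + |v|)^\mu$: unlike in Corollary~\ref{cor:est_Kdv_alpha}, where the integration is in $v^*$ and $(1+|v|)$ is a constant that can be pulled out of the integral, here we integrate in $v$ and this factor genuinely interacts with the measure. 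First I would invoke the elementary inequality stated just before the corollary, raised to the power $\mu$ (using $(A+B)^\mu \lesssim A^\mu + B^\mu$), to split
\[
|\nabla_v k(v, v^*)|^\mu \lesssim \frac{E_\rho(v, v^*)^\mu}{|v - v^*|^\mu} + (1 + |v^*|)^\mu \frac{E_\rho(v, v^*)^\mu}{|v - v^*|^{2\mu}}.
\]
This trades the troublesome $(1+|v|)$ for the constant $(1+|v^*|)$, at the cost of one extra factor of $|v - v^*|^{-1}$ in the first term.

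For the first term I would apply Lemma~\ref{lem:est_E_alpha} with $\mu_1 = \mu$ and $\mu_2 = \mu$. The lemma is stated for integration in $v^*$, but since $E_\rho(v, v^*)$ and $|v - v^*|$ are symmetric in their two arguments, the same bound holds with the roles of $v$ and $v^*$ interchanged, giving
\[
\int_{\R^3} \frac{E_\rho(v, v^*)^\mu}{|v - v^*|^\mu}\, e^{a |v|^2}\, dv \lesssim (1 + |v^*|)^{-1} e^{a |v^*|^2},
\]
which is admissible because $\mu < 3/2 < 3$ and the range of $a$ is exactly $-\mu(1-\rho)/2 < a < \mu(1-\rho)/2$. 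For the second term I would recognize that $(1+|v^*|)^\mu E_\rho(v,v^*)^\mu / |v-v^*|^{2\mu}$ is, up to a constant, the bound \eqref{AC'} for $|\nabla_{v^*} k(v^*, v)|^\mu$; hence the estimate obtained by swapping $v \leftrightarrow v^*$ in Corollary~\ref{cor:est_Kdv_alpha} (equivalently, Lemma~\ref{lem:est_E_alpha} with $\mu_1 = 2\mu < 3$, $\mu_2 = \mu$, and variables interchanged) controls it. Combining the two contributions and collecting the resulting powers of $(1 + |v^*|)$ then yields the claimed estimate.

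The truly routine steps — the $t$- and $r$-integrations that produce the decay in $(1+|v^*|)$ — are hidden inside Lemma~\ref{lem:est_E_alpha} and are already carried out there. The one point requiring care, which I regard as the crux, is the reduction in the first paragraph: the factor $(1+|v|)$ cannot be extracted from the $v$-integral, and the whole argument hinges on converting it into $(1+|v^*|)$ via the triangle-inequality bound $1 + |v| \le (1 + |v^*|) + |v - v^*|$, while keeping track of the extra $|v - v^*|^{-1}$ this produces and verifying that the resulting singularity of order $2\mu$ is still integrable in three dimensions. This last requirement, $2\mu < 3$, is precisely where the hypothesis $\mu < 3/2$ enters.
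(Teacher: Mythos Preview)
Your approach is identical to the paper's: the inequality $1 + |v| \le |v - v^*| + (1 + |v^*|)$ is precisely what the paper invokes in the sentence preceding the corollary, and reducing each resulting piece to the $v\leftrightarrow v^*$--swapped versions of Lemma~\ref{lem:est_E_alpha} and Corollary~\ref{cor:est_Kdv_alpha} is exactly the paper's ``switching variables'' step. One caveat on your final ``collecting the resulting powers'' line: the second term actually yields $(1+|v^*|)^\mu\cdot(1+|v^*|)^{-1}=(1+|v^*|)^{\mu-1}$ rather than $(1+|v^*|)^{-1}$, which is also what the paper's argument literally produces --- the stated exponent $-1$ appears to be a minor slip for $\mu-1$, and only this weaker bound is used downstream.
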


With the aforementioned preparation, we are now in the position to show the existence and uniqueness of solutions to the integral equation \eqref{integral form} in $L^p_\a$ and $C_\a$ spaces.

\begin{proof}[Proof of Lemma \ref{estimate on Lp}]
When $p = 1$, applying Lemma \ref{change of variable lemma}, Lemma \ref{lem:est_S_convex} and Corollary \ref{cor:est_K_alpha*}, we have
\begin{align*}
&\int_{\O \times \mathbb{R}^{3}} | S_{\Omega}Kh(x,v) | e^{\a |v|^2}\,dxdv\\
=&\int_{\mathbb{R}^{3}}\int_{\Omega}\left| \int_{0}^{\tau(x, v)} e^{-\nu(v)s}Kh(x-sv,v)ds \right| e^{\a |v|^2}\,dxdv\\
\leq&\int_{\mathbb{R}^{3}}\int_{\Omega} \int_{0}^{\tau(x, v)} e^{-\nu_0 s}\left|Kh(x-sv,v)\right|\,ds \,e^{\a |v|^2}\,dxdv\\
\lesssim& \int_{\mathbb{R}^{3}}\int_{\Omega} \frac{\diam(\O)}{|u|} \int_{\R^3} |k(u, v^*)| |h(y, v^*)|\,dv^* \, e^{\a |u|^2}\,dydu\\
\lesssim& \diam(\O) \int_{\R^3} \int_\O |h(y, v^*)| e^{\a |v^*|^2}\,dydv^*.
\end{align*}
For $1 < p <\infty$ , by the H\"older inequality and Lemma \ref{lem:est_S_convex}, we have
\begin{align*}
&\int_{\O \times \mathbb{R}^{3}}\int_{\Omega}| S_{\Omega}Kh(x,v) |^p e^{p \a |v|^2}\,dxdv\\
=&\int_{\mathbb{R}^{3}}\int_{\Omega}\left| \int_{0}^{\tau(x, v)} e^{-\nu(v)s}Kh(x-sv,v)ds \right|^p e^{p \a |v|^2}\,dxdv\\
\leq&\int_{\mathbb{R}^{3}}\int_{\Omega}\left( \int_{0}^{\tau(x, v)} e^{-\nu_0 s}\,ds \right)^{\frac{p}{p'}} \left( \int_{0}^{\tau(x, v)} e^{-\nu_0 s}\left|Kh(x-sv,v)\right|^p\,ds \right) e^{p \a |v|^2}\,dxdv\\
\lesssim& \int_{\mathbb{R}^{3}}\int_{\Omega} \int_{0}^{\tau(x, v)} e^{-\nu_0 s}\left|Kh(x-sv,v)\right|^p \,ds \, e^{p \a |v|^2}\,dxdv,
\end{align*}
where $p'$ is the H\"older conjugate of $p$. 

In the same way as for the case $p = 1$, we have
\begin{align*}
&\int_{\mathbb{R}^{3}}\int_{\Omega} \int_{0}^{\tau(x, v)} e^{-\nu_0 s}\left|Kh(x-sv,v)\right|^p \,ds \, e^{p \a |v|^2}\,dxdv\\
\lesssim &\int_{\mathbb{R}^{3}} \int_{\Omega} \frac{\diam(\O)}{|u|} \left| \int_{\R^3} k(u, v^*) h(y, v^*)\,dv^* \right|^p \,e^{p \a |u|^2}\,dy du.
\end{align*}

Take $\a_1$ satisfying
\begin{equation} \label{alpha1_cond1}
-\frac{1 - \rho}{2} < - p' \a_1 < \frac{1 - \rho}{2}
\end{equation}
and
\begin{equation} \label{alpha1_cond2}
-\frac{1 - \rho}{2} < p (\a - \a_1) < \frac{1 - \rho}{2}.
\end{equation}
We check that we can indeed take such a constant $\a_1$. By solving inequalities \eqref{alpha1_cond1} and \eqref{alpha1_cond2} in terms of $\a_1$, we have
\begin{equation} \label{alpha1_cond1'}
-\frac{1 - \rho}{2p'} < \a_1 < \frac{1 - \rho}{2p'}
\end{equation}
and
\begin{equation} \label{alpha1_cond2'}
\a -\frac{1 - \rho}{2p} < \a_1 < \a + \frac{1 - \rho}{2p}.
\end{equation}
Since $\a \geq 0$, we have $-(1 - \rho)/2p' < 0 < \a + (1 - \rho)/2p$. So, it suffices to show that $\a - (1 - \rho)/2p < (1 - \rho)/2p'$. It is equivalent to 
\[
\a < \frac{1 - \rho}{2p} + \frac{1 - \rho}{2p'} = \frac{1 - \rho}{2},
\]
which is the assumption of Lemma \ref{estimate on Lp}. Thus, we can indeed take a constant $\a_1$ satisfying \eqref{alpha1_cond1} and \eqref{alpha1_cond2}.

Then, by Corollary \ref{cor:est_K_alpha} and Corollary \ref{cor:est_K_alpha*}, we have
\begin{align*}
&\int_{\mathbb{R}^{3}}\int_{\Omega}\frac{1}{|u|} \left| \int_{\mathbb{R}^{3}}k(u,v^{*})h(y,v^{*}) \,dv^{*} \right|^p \,e^{p \a |u|^2}dydu\\
\leq&\int_{\mathbb{R}^{3}}\int_{\Omega}\frac{1}{|u|} \left( \int_{\mathbb{R}^{3}} | k(u,v^{*}) | e^{-p' \a_1 |v^*|^2} \,dv^{*} \right)^{\frac{p}{p'}}\\
&\times \left( \int_{R^{3}} |k(u,v^{*})|  |h(y,v^{*})|^p e^{p \a_1 |v^*|^2} \,dv^{*} \right) e^{p \a |u|^2}dydu\\
\lesssim&\int_{\mathbb{R}^{3}}\int_{\Omega}\frac{1}{|u|} \left( \int_{\mathbb{R}^{3}} | k(u,v^{*}) | |h(y,v^{*})|^p e^{p \a_1 |v^*|^2} \,dv^{*} \right)\,e^{p (\a - \a_1) |u|^2} dydu\\
=&\int_{\mathbb{R}^{3}}\int_{\Omega} \left( \int_{\mathbb{R}^{3}} \frac{1}{|u|}  | k(u,v^{*}) | e^{p (\a - \a_1) |u|^2} \,du \right) |h(y,v^{*})|^p\,e^{p \a_1 |v^*|^2}dydv^{*}\\
\lesssim& \int_{\mathbb{R}^{3}} e^{p (\a - \a_1) |v^*|^2} \int_{\Omega} |h(y,v^{*})|^p \,e^{p \a_1 |v^*|^2}dydv^{*}\\
=& \int_{\mathbb{R}^{3}} \int_{\Omega} |h(y,v^{*})|^p \,e^{p \a |v^*|^2}dydv^{*}.
\end{align*}

Summarizing the above estimates, we obtain
\[
\int_{\O \times \mathbb{R}^{3}} | S_{\Omega}Kh(x,v) |^p\,e^{p \a |v|^2}\,dxdv \lesssim \diam(\Omega) \int_{\O \times \mathbb{R}^{3}} |h(x,v)|^p\,e^{p \a |v|^2}\,dxdv.
\]
This completes the proof.
\end{proof}

By Lemma \ref{estimate on Lp}, we obtain
\[
\opnorm{S_\Omega K}_{p, \alpha} \lesssim \diam(\O)^{\frac{1}{p}}
\]
for all $1 \leq p < \infty$ and $0 \leq \a < (1 - \rho)/2$. In particular, for any $1 \leq p < \infty$ and $0 \leq \a < (1 - \rho)/2$, there is a constant $\epsilon(p, \a) > 0$ such that $\opnorm{S_\O K}_{p, \alpha} < 1$ for all $0 < \diam(\O) < \epsilon(p, \a)$. In that case, from the contraction mapping theorem, for any given $Jg \in L^p_\alpha(\O \times \R^3)$, the equation \eqref{integral form} has the unique solution $f \in L^p_\alpha(\O \times \R^3)$.


In order to show well-posedness of the integral equation \eqref{integral form} in $C_\a((\O \times \R^3) \cup \Gamma^\pm)$, we iterate the integral to obtain
\begin{equation} \label{integral form2}
f = Jg + S_\O K Jg + S_\O K S_\O K f.
\end{equation}
We discuss the existence and uniqueness of the solution to the equation \eqref{integral form2} in the weighted space $C_\alpha((\O \times \R^3) \cup \Gamma^\pm)$ instead of \eqref{integral form}. We remark that $|f(x, v)| \leq \| f \|_{C_\alpha((\O \times \R^3) \cup \Gamma^\pm)} e^{- \alpha |v|^2}$ for all $(x, v) \in (\O \times \R^3) \cup \Gamma^\pm$ and for any $f \in C_\alpha((\O \times \R^3) \cup \Gamma^\pm)$.

\begin{lemma} \label{lem:small_second_convex}
Let $0 \leq \alpha < (1 - \rho)/2$, where $\rho$ is a constant in {\bf Assumption A}. Then, we have
$$
\| K S_\O K h \|_{C_\alpha((\O \times \R^3) \cup \Gamma^\pm)} \lesssim \diam(\O) \| h \|_{C_\alpha((\O \times \R^3) \cup \Gamma^\pm)}
$$
for all $h \in C_\alpha((\O \times \R^3) \cup \Gamma^\pm)$.
\end{lemma}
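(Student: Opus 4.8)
The plan is to bound the weighted sup-norm $\sup_{(x,v)} e^{\alpha|v|^2}|KS_\O K h(x,v)|$ directly by writing out the three nested integrals defining $KS_\O K h$, namely
\[
KS_\O K h(x,v) = \int_{\R^3} k(v,v^*) \int_0^{\tau(x,v^*)} e^{-\nu(v^*)s} \int_{\R^3} k(v^*,v^{**}) h(x-sv^*,v^{**})\,dv^{**}\,ds\,dv^*,
\]
and estimating them from the innermost outward, peeling off one factor at a time with the preliminary estimates already established. Fixing $(x,v) \in (\O\times\R^3)\cup\Gamma^\pm$ and writing $M := \|h\|_{C_\alpha((\O\times\R^3)\cup\Gamma^\pm)}$, I first insert the pointwise bound $|h(y,v^{**})| \leq M e^{-\alpha|v^{**}|^2}$ and take absolute values inside each integral; since all resulting integrands are nonnegative the order of integration is immaterial.

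The computation proceeds in three steps. First, the innermost integral $\int_{\R^3}|k(v^*,v^{**})|e^{-\alpha|v^{**}|^2}\,dv^{**}$ is controlled by Corollary \ref{cor:est_K_alpha} with $\mu = 1$ and $a = -\alpha$ (admissible since $0 \leq \alpha < (1-\rho)/2$), giving $\lesssim (1+|v^*|)^{-1}e^{-\alpha|v^*|^2}$. Second, the $s$-integral is bounded using $\nu(v^*) \geq \nu_0$ together with Lemma \ref{lem:est_S_convex}, where the crucial choice is to retain the second term in the minimum, so that $\int_0^{\tau(x,v^*)} e^{-\nu(v^*)s}\,ds \leq \diam(\O)/|v^*|$. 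This is precisely the step that produces the factor $\diam(\O)$ and, with it, an extra $1/|v^*|$ singularity. Third, the remaining integral $\int_{\R^3}|k(v,v^*)|\,|v^*|^{-1}(1+|v^*|)^{-1}e^{-\alpha|v^*|^2}\,dv^*$ is handled by bounding $(1+|v^*|)^{-1} \leq 1$ and applying Corollary \ref{cor:est_K_alpha*} with $\mu_1 = \mu_2 = 1$ (so $\mu_1 + \mu_2 = 2 < 3$) and $a = -\alpha$, which yields $\lesssim e^{-\alpha|v|^2}$. Multiplying by $e^{\alpha|v|^2}$ and by $\diam(\O)M$ gives $e^{\alpha|v|^2}|KS_\O K h(x,v)| \lesssim \diam(\O)M$ uniformly in $(x,v)$, which is the claim after taking the supremum.

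The main obstacle — indeed essentially the only delicate point — is the $1/|v^*|$ singularity introduced in the second step. Using the $\diam(\O)/|v^*|$ branch of Lemma \ref{lem:est_S_convex} instead of the harmless $1/\nu_0$ branch is unavoidable if we want to extract the smallness factor $\diam(\O)$, but it forces an extra $|v^*|^{-1}$ weight against $|k(v,v^*)|$ in the final integral. The saving is that $k$ has only an integrable diagonal singularity together with strong Gaussian decay, so that the combined homogeneity order $\mu_1 + \mu_2 = 2$ stays below the threshold $3$ demanded by Corollary \ref{cor:est_K_alpha*}; it is exactly this margin that renders the singular weight harmless. Continuity of $KS_\O K h$, and hence its membership in $C_\alpha((\O\times\R^3)\cup\Gamma^\pm)$, follows from the continuity properties of $S_\O$ recorded in Proposition \ref{prop:bound_S_Calpha} and the kernel estimates, so the lemma reduces to the quantitative bound above.
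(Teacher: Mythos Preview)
Your proof is correct and follows essentially the same approach as the paper: bound the inner $K$ by a Gaussian weight in $v^*$, use the $\diam(\Omega)/|v^*|$ branch of Lemma~\ref{lem:est_S_convex} for the $s$-integral to extract the smallness factor, and then absorb the resulting $|v^*|^{-1}$ singularity via Corollary~\ref{cor:est_K_alpha*} with $\mu_1=\mu_2=1$. The only cosmetic difference is that the paper bounds the innermost integral by Corollary~\ref{cor:est_bound_k} (yielding just $e^{-\alpha|v^*|^2}$) whereas you invoke Corollary~\ref{cor:est_K_alpha} to pick up an extra $(1+|v^*|)^{-1}$ that you then discard; this changes nothing.
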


\begin{proof}
By Corollary \ref{cor:est_bound_k}, we have
\[
|Kh(x, v)| \lesssim e^{-\alpha|v|^2} \| h \|_{C_\alpha((\O \times \R^3) \cup \Gamma^\pm)}
\]
for all $(x, v) \in (\O \times \R^3) \cup \Gamma^\pm$ and for all $h \in C_\alpha((\O \times \R^3) \cup \Gamma^\pm)$. Thus, by Lemma \ref{lem:est_S_convex} and Corollary \ref{cor:est_K_alpha*}, we have
\begin{align*}
&|(K S_\O K h)(x, v)|\\ 
\lesssim& \| h \|_{C_\alpha((\O \times \R^3) \cup \Gamma^\pm)} \int_{\R^3} |k(v, v^*)| \int_0^{\tau(x, v^*)} e^{-\nu(|v^*|) t} e^{-\alpha |v^*|^2}\,dt\,dv^*\\ 
\lesssim& \| h \|_{C_\alpha((\O \times \R^3) \cup \Gamma^\pm)} \int_{\R^3} |k(v, v^*)| \min \left\{ \frac{1}{\nu_0}, \frac{\diam(\O)}{|v^*|}\right\} e^{-\alpha |v^*|^2}\,dv^*\\
\lesssim& \diam(\O) \| h \|_{C_\alpha((\O \times \R^3) \cup \Gamma^\pm)} \int_{\R^3} \frac{|k(v, v^*)|}{|v^*|} e^{-\alpha |v^*|^2}\,dv^*\\
\lesssim& \diam(\O) \| h \|_{C_\alpha((\O \times \R^3) \cup \Gamma^\pm)} e^{-\alpha |v|^2}
\end{align*}
for all $(x, v) \in (\O \times \R^3) \cup \Gamma^\pm$, which implies that 
\[
\| K S_\O K h \|_{C_\alpha((\O \times \R^3) \cup \Gamma^\pm)} \lesssim \diam(\O) \| h \|_{C_\alpha((\O \times \R^3) \cup \Gamma^\pm)}.
\]
This completes the proof.
\end{proof}

By Proposition \ref{prop:bound_S_Calpha} and Lemma \ref{lem:small_second_convex}, we have
\[
\opnorm{S_\O K S_\O K}_\alpha \lesssim \diam(\O).
\]
In particular, there is a constant $\epsilon > 0$ such that $\opnorm{S_\O K S_\O K}_\alpha < 1$ if $\diam(\O) < \epsilon$. In that case, from the contraction mapping theorem again, for any given $Jg \in C_\alpha((\O \times \R^3) \cup \Gamma^\pm)$, the equation \eqref{integral form2} has the unique solution $f \in C_\alpha((\O \times \R^3) \cup \Gamma^\pm)$. Moreover, we have
\begin{equation} \label{eq:uniform_est}
\| f \|_{C_\a((\O \times \R^3) \cup \Gamma^\pm)} \lesssim \| g \|_{C_\a(\Gamma^-)}
\end{equation}
for sufficiently small $\epsilon > 0$.

\section{Regularity on small convex domains} \label{sec:bounded_domain}

In this section, we discuss the existence and regularity of the solution to the integral equation \eqref{integral form}. In particular, we study convergence of the series \eqref{Picard} in $W^{1, p}_\a$ spaces.

To this aim, we give $L^p_\a$ estimates for $x$ and $v$ derivatives for the function $S_\O K h$ with $h \in W^{1, p}_\a(\O \times \R^3)$ for the case $1 \leq p < 2$. 

For the $x$ derivative, we have the following estimate.

\begin{lemma} \label{estimate on W1px for 1 to 2}
Let $1 \leq p < 2$ and $0 \leq \alpha < (1 - \rho)/2$, where $\rho$ is the constant in {\bf Assumption A}. Then, for $h \in W^{1, p}_\a(\O \times \R^3)$, we have
\[
\| \nabla_x S_\O K h \|_{L^p_\a(\O \times \R^3)} \lesssim \diam(\O)^\frac{1}{p} \| h \|_{W^{1, p}_\a(\O \times \R^3)} + \diam(\O)^{\frac{1}{p}} \| h \|_{L^p_\a(\p \O \times \R^3)}.
\]
\end{lemma}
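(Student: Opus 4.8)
The plan is to differentiate $S_\O K h$ directly, separating the result into an interior term and a boundary term. Writing $G = Kh$, we have $S_\O K h(x,v) = \int_0^{\tau(x,v)} e^{-\nu(v)s} G(x-sv,v)\,ds$, and since the kernel $k(v,v^*)$ does not depend on $x$, the Leibniz rule (valid on the interior, where $\tau$ is $C^1$; the grazing set is negligible) gives
\[
\nabla_x S_\O K h(x,v) = e^{-\nu(v)\tau(x,v)}\,(Kh)(q(x,v),v)\,\nabla_x\tau(x,v) + S_\O K(\nabla_x h)(x,v),
\]
because differentiating the integrand transfers $\nabla_x$ onto $h$ through $K$. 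The second term is immediate: applying Lemma \ref{estimate on Lp} componentwise to $\nabla_x h$ yields $\| S_\O K(\nabla_x h)\|_{L^p_\a} \lesssim \diam(\O)^{1/p}\|\nabla_x h\|_{L^p_\a} \le \diam(\O)^{1/p}\|h\|_{W^{1,p}_\a}$, which is the first term in the claimed bound. Everything therefore reduces to the boundary term.

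For the boundary term I would first compute $\nabla_x\tau$. Since $q(x,v)\in\p\O$ lies on a level set of a defining function of $\O$, implicit differentiation gives $\nabla_x\tau(x,v) = n(q)/(n(q)\cdot v)$, whence $|\nabla_x\tau(x,v)| = 1/(|v|\,N(q(x,v),v))$. I then estimate the $L^p_\a$ norm of $e^{-\nu(v)\tau}(Kh)(q,v)\nabla_x\tau$ by passing from a domain integral to a boundary integral via the change of variables $x = z + tv$ of \eqref{domain_to_boundary}, under which $q(x,v)=z$ and $\tau(x,v)=t$. The Jacobian weight $N(z,v)|v|$ partially cancels the $|v|^{-p}N^{-p}$ coming from $|\nabla_x\tau|^p$, and carrying out the $t$-integral (bounded by $\diam(\O)/|v|$, since $\int_0^{\tau(z,-v)}e^{-p\nu(v)t}\,dt \le \tau(z,-v)\le \diam(\O)/|v|$) supplies the factor $\diam(\O)$ and an extra $|v|^{-1}$, leaving
\[
\|e^{-\nu(v)\tau}(Kh)(q,v)\nabla_x\tau\|_{L^p_\a}^p \lesssim \diam(\O)\int_{\R^3}\int_{\Gamma^-_v}|(Kh)(z,v)|^p\,|v|^{-p}N(z,v)^{1-p}e^{p\a|v|^2}\,d\Sigma(z)\,dv.
\]

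Next I would reduce $(Kh)(z,v)$ back to $h(z,\cdot)$ exactly as in the proof of Lemma \ref{estimate on Lp}: a H\"older split of the weight $e^{\pm p\a_1|v^*|^2}$ together with Corollary \ref{cor:est_K_alpha} (with $\mu=1$) to bound $\int|k(v,v^*)|e^{-p'\a_1|v^*|^2}\,dv^*\lesssim (1+|v|)^{-1}e^{-p'\a_1|v|^2}$, where $\a_1$ is chosen as in Lemma \ref{estimate on Lp} so that all Gaussian exponents stay inside $(-(1-\rho)/2,(1-\rho)/2)$. After Fubini, the whole estimate comes down to the uniform kernel bound
\[
\int_{\Gamma^-_z} |k(v,v^*)|\,(1+|v|)^{-(p-1)}|v|^{-p}N(z,v)^{1-p}\,e^{p(\a-\a_1)|v|^2}\,dv \lesssim e^{p(\a-\a_1)|v^*|^2}.
\]
Using Corollary \ref{cor:est_bound_k} with $a = p(\a-\a_1)$ to absorb the Gaussian weight into $|k|$, and dropping $(1+|v|)^{-(p-1)}\le 1$, this reduces to showing $\int_{\Gamma^-_z}|v-v^*|^{-1}e^{-\a_{1,a,\rho}|v-v^*|^2}|v|^{-p}N(z,v)^{1-p}\,dv\lesssim 1$ uniformly in $z$ and $v^*$.

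This last estimate is the heart of the matter, and the place where $p<2$ enters; I expect it to be the main obstacle. Choosing coordinates so that $n(z)=e_1$, one has $N(z,v)^{1-p}=(|v_1|/|v|)^{1-p}$, so that the combination simplifies to $|v|^{-p}N(z,v)^{1-p} = |v_1|^{1-p}|v|^{-1}$. The troublesome grazing factor $N^{1-p}$, whose angular singularity is integrable on $S^2$ only for $p<2$, thereby becomes a planar singularity $|v_1|^{1-p}$ across $\{v_1=0\}$, integrable in $v_1$ precisely when $1-p>-1$. The remaining integral $\int_{\{v_1<0\}}|v-v^*|^{-1}e^{-\a_{1,a,\rho}|v-v^*|^2}|v_1|^{1-p}|v|^{-1}\,dv$ then carries only the mild point singularities $|v|^{-1}$ at the origin and $|v-v^*|^{-1}$ at $v^*$ (both $\R^3$-integrable) plus the planar one, and I would bound it uniformly by splitting into $\{|v|\le|v-v^*|\}$ and $\{|v|\ge|v-v^*|\}$ as in the proof of Lemma \ref{lem:est_K_alpha*}, using the Gaussian for decay at infinity. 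The delicate point is to control this integral uniformly in $v^*$ \emph{without} separating radial and angular variables crudely (a direct H\"older split of $N^{1-p}$ against $|v-v^*|^{-1}$ does not reach the full range); the reformulation $N^{1-p}|v|^{-p}=|v_1|^{1-p}|v|^{-1}$ is exactly what opens up the whole interval $1\le p<2$.
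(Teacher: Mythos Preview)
Your argument tracks the paper's closely: the same decomposition $\nabla_x S_\O Kh = S_\O K(\nabla_x h) + S_{\O,x}Kh$, the same passage to the boundary via \eqref{domain_to_boundary}, the same H\"older split with auxiliary exponent $\a_1$, and the same identity $|v|^{-p}N(z,v)^{1-p}=|v|^{-1}|v_n|^{1-p}$ (your $|v_1|$ is the paper's $|v_n|$). The one point of divergence is how the final kernel integral is handled. You propose to bound
\[
\int_{\Gamma^-_z}|v-v^*|^{-1}e^{-c|v-v^*|^2}\,|v|^{-1}|v_1|^{1-p}\,dv
\]
by the case split of Lemma~\ref{lem:est_K_alpha*}; but that lemma treats only \emph{point} singularities $|v|^{-\mu_1}$ at the origin and does not directly cover the planar factor $|v_1|^{1-p}$. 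After the shift $w=v-v^*$ in your region $\{|v|\ge|v-v^*|\}$ one is left with $\int_{\R^3}|w|^{-2}|w_1+v_1^*|^{1-p}e^{-c|w|^2}\,dw$, whose boundedness \emph{uniformly in $v_1^*$} is true for $p<2$ but needs its own short argument rather than a citation.

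The paper sidesteps this by first projecting the kernel onto the tangential plane: since $|v-v^*|\ge|v_t-v_t^*|$, one may replace $|v-v^*|^{-1}e^{-c|v-v^*|^2}$ by $|v_t-v_t^*|^{-1}e^{-c|v_t-v_t^*|^2}$, which is independent of the normal component $v_n$. The integral then factorizes: the normal integral $\int_0^\infty (r^2+|v_b|^2)^{-1/2}r^{1-p}\,dr$ scales to $C\,|v_b|^{1-p}$ (finite precisely for $1<p<2$), and the remaining 2D integral $\int_{\R^2}|v_b|^{1-p}|v_b-v_b^*|^{-1}e^{-c|v_b-v_b^*|^2}\,dv_b$ is bounded by the same mechanism as Lemma~\ref{lem:est_K_alpha*} in two dimensions. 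This decouples the grazing singularity from the collision kernel and makes the uniformity in $v^*$ immediate.
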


\begin{proof}
Observe that 
\[
\nabla_{x}S_{\Omega}Kh = S_{\Omega}K \nabla_{x} h + S_{\O, x} Kh,
\]
where
\[
S_{\O, x} h(x, v) := (\nabla_{x}\tau(x, v)) e^{-\nu(v)\tau(x, v)} h(q(x,v),v).
\]
By Lemma \ref{estimate on Lp}, we have
\[
\int_{\O \times \mathbb{R}^{3}} | S_{\Omega}K\nabla_{x}h(x,v) |^p e^{p \a |v|^2}\,dxdv \lesssim \diam(\Omega) \int_{\O \times \mathbb{R}^{3}} | \nabla_{x}h(x,v) |^p e^{p \a |v|^2} \,dxdv.
\]
Thus, we focus on the estimate for the second term. In other words, we prove that 
\[
\| S_{\O, x} K h \|_{L^p_\a(\O \times \R^3)} \lesssim \diam(\O)^{\frac{1}{p}} \| h \|_{L^p_\a(\p \O \times \R^3)}.
\]

It is known in \cite{GuoKim} that
\begin{equation} \label{tau_dx}
\nabla_{x}\tau(x, v)=\frac{-n(q(x,v))}{N_-(x,v)|v|},
\end{equation}
where
\[
N_-(x, v) := N(q(x, v), v), \quad (x, v) \in \O \times \R^3.
\]
Now we perform the change of variables $z = q(x, v)$ and $s = \tau(x, v)$. Noting that $z \in \Gamma^-_v$, we have 
\begin{equation} 
\begin{split} \label{est:Lp_domain_to_boundary}
&\int_{\O \times \mathbb{R}^{3}} | (\nabla_x \tau(x, v)) e^{-\nu(v)\tau(x, v)}Kh(q(x,v),v) |^p e^{p \a |v|^2} \,dxdv\\
\leq& \int_{\mathbb{R}^{3}}\int_{\Omega} \frac{1}{N_-(x,v)^p |v|^p} e^{-p \nu_0 \tau(x, v)}| Kh(q(x,v),v) |^p e^{p \a |v|^2} \,dxdv\\
=& \int_{\mathbb{R}^{3}}\int_{\Gamma^-_v}\int_{0}^{\tau(z, -v)} \frac{1}{N(z,v)^p |v|^p}e^{-p \nu_0 s}| Kh(z,v) |^p N(z,v)|v| e^{p \a |v|^2}\,dsd\Sigma(z)dv\\
\lesssim& \diam(\O) \int_{\R^3} \int_{\Gamma^-_v} \frac{1}{|v|^p} \frac{1}{N(z,v)^{p - 1}}| Kh(z,v) |^p \, e^{p \a |v|^2} d\Sigma(z) dv\\
=& \diam(\O) \int_{\partial \O} \int_{\Gamma^-_z} \frac{1}{|v|^p} \frac{1}{N(z,v)^{p - 1}}| Kh(z,v) |^p \, e^{p \a |v|^2} dv d\Sigma(z).
\end{split}
\end{equation}
Here, we have used Corollary \ref{cor:est_S_convex}.

In the case $p = 1$, we have
\begin{align*}
&\int_{\partial \O} \int_{\Gamma^-_z} \frac{1}{|v|^p} \frac{1}{N(z,v)^{p - 1}}| Kh(z,v) |^p \, e^{p \a |v|^2} dv d\Sigma(z)\\
\leq& \int_{\mathbb{R}^{3}} \int_{\partial\Omega} \left( \int_{\R^3} \frac{1}{|v|} |k(v, v^*)| e^{\a |v|^2} \,dv \right) |h(z, v^*)| \, d\Sigma(z)dv^*\\
\lesssim& \int_{\partial \Omega \times \mathbb{R}^{3}} |h(z, v^*)| e^{\a |v^*|^2}\, d\Sigma(z)dv^*.
\end{align*}

For $1 < p < 2$ , we fix the variable $z$ and decompose the velocity $v$ into two components: $v = v_n + v_t$, where $v_n := (v \cdot n(z)) n(z)$ and $v_t := v - v_n$. Then, we have
\begin{align*}
&\int_{\Gamma^-_z} \frac{1}{|v|^p} \frac{1}{N(z,v)^{p - 1}}| Kh(z,v) |^p \, e^{p \a |v|^2} dv\\
\lesssim& \int_{\Gamma^-_z} \frac{1}{|v|^p} \frac{1}{N(z,v)^{p - 1}} \left( \int_{\R^3} |k(v, v^*)| |h(z, v^*)|^p e^{p \a_1 |v^*|}\,dv^* \right) \, e^{p (\a - \a_1) |v|^2} dv\\
=& \int_{\R^3} \left( \int_{\Gamma^-_z} \frac{1}{|v|} \frac{1}{|v_n|^{p - 1}} |k(v, v^*)| e^{p (\a - \a_1) |v|^2}\,dv \right) |h(z, v^*)|^p\,e^{p \a_1 |v^*|^2} dv^*\\
\lesssim& \int_{\R^3} \left( \int_{\Gamma^-_z} \frac{1}{|v|} \frac{1}{|v_n|^{p - 1}} |v_t - v^*_t|^{-1} e^{- \a_{1, p(\a - \a_1), \rho} |v_t - v^*_t|^2}\,dv \right) |h(z, v^*)|^p\,e^{p \a |v^*|^2} dv^*,
\end{align*}
where $\a_1$ is the constant satisfying \eqref{alpha1_cond1} and \eqref{alpha1_cond2}, and $\a_{1, p(\a - \a_1), \rho}$ is the constant in \eqref{alpha_1} with $a = p(\a - \a_1)$. By the rotation, we simply denote $v_n = r (1, 0, 0)$ for $r > 0$ and $v_t = (0, v_b)$ for $v_b \in \R^2$. Then, we further have
\begin{align*}
&\int_{\Gamma^-_z} \frac{1}{|v|} \frac{1}{|v_n|^{p - 1}} |v_t - v^*_t|^{-1} e^{- \a_{1, p(\a - \a_1), \rho} |v_t - v^*_t|^2}\,dv\\
=& \int_{\R^2} \left( \int_0^\infty \frac{1}{\sqrt{r^2 + |v_b|^2}} \frac{1}{r^{p-1}}\,dr \right) |v_b - v^*_b|^{-1} e^{- \a_{1, p(\a - \a_1), \rho} |v_b - v^*_b|^2}\,dv_b\\
=& \int_{\R^2} \left( \int_0^\infty \frac{1}{\sqrt{r^2 + 1}} \frac{1}{r^{p-1}}\,dr \right) |v_b|^{1 - p} |v_b - v^*_b|^{-1} e^{- \a_{1, p(\a - \a_1), \rho} |v_b - v^*_b|^2}\,dv_b.
\end{align*}
Since $0 < p - 1 < 1$, we have
\[
\int_0^\infty \frac{1}{\sqrt{r^2 + 1}} \frac{1}{r^{p-1}}\,dr \lesssim 1.
\]
Also, following the proof of Lemma \ref{lem:est_K_alpha*}, we have
\[
\int_{\R^2} |v_b|^{1 - p} |v_b - v^*_b|^{-1} e^{- \a_{1, p(\a - \a_1), \rho} |v_b - v^*_b|^2}\,dv_b \lesssim 1.
\]
Therefore, we have
\begin{align*}
\int_{\Gamma^-_z} \frac{1}{|v|^p} \frac{1}{N(z,v)^{p - 1}}| Kh(z,v) |^p \, e^{p \a |v|^2} dv \lesssim \int_{\R^3} |h(z, v^*)|^p\,e^{p \a |v^*|^2} dv^*.
\end{align*}
This completes the proof.
\end{proof}

For the $v$ derivative, we have the following estimates.

\begin{lemma} \label{estimate on W1pv for 1 to 2}
Let $1 \leq p < 2$ and $0 \leq \alpha < (1 - \rho)/2$, where $\rho$ is the constant in {\bf Assumption A}. Then, for $h \in W^{1, p}_\a(\O \times \R^3)$, we have
\begin{equation*}
\begin{split}
\| \nabla_v S_\O K h \|_{L^p_\a(\O \times \R^3)} \lesssim& \diam(\O)^\frac{1}{p} \| h \|_{W^{1, p}_\a(\O \times \R^3)} + \| h \|_{L^p_\a(\O \times \R^3)}\\ 
&+ \diam(\O)^{\frac{1}{p}} \| h \|_{L^p_\a(\p \O \times \R^3)}.
\end{split}
\end{equation*}
\end{lemma}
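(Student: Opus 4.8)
The plan is to differentiate the representation
$$S_\O K h(x,v) = \int_0^{\tau(x,v)} e^{-\nu(v)s}(Kh)(x-sv,v)\,ds$$
in $v$ by the product and chain rules and to estimate the resulting pieces separately. Writing $q(x,v)=x-\tau(x,v)v$ and recalling (analogously to \eqref{tau_dx}, cf.\ \cite{GuoKim}) that $\nabla_v\tau(x,v) = \tau(x,v)\,n(q(x,v))/(N_-(x,v)|v|)$, the derivative splits as
$$\nabla_v S_\O K h = I + II + III + IV,$$
where $I := (\nabla_v\tau)\,e^{-\nu(v)\tau}(Kh)(q(x,v),v)$ is a boundary term, $II := -\int_0^\tau s(\nabla_v\nu(v))e^{-\nu(v)s}(Kh)(x-sv,v)\,ds$ comes from differentiating the exponent, $III := -\int_0^\tau s\,e^{-\nu(v)s}(K\nabla_x h)(x-sv,v)\,ds$ comes from the chain rule hitting the spatial argument $x-sv$ (using $\nabla_x Kh = K\nabla_x h$), and $IV := \int_0^\tau e^{-\nu(v)s}(K_v h)(x-sv,v)\,ds$ comes from the explicit velocity dependence of the kernel, $K_v$ denoting the integral operator with kernel $\nabla_v k(v,v^*)$.

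For the terms $I$, $II$, $III$ I would reproduce, essentially verbatim, arguments already developed. Term $I$ has exactly the structure of the boundary term $S_{\O,x}Kh$ treated in Lemma \ref{estimate on W1px for 1 to 2}; the only new feature is the extra factor $\tau=s$ coming from $\nabla_v\tau$, which after the change of variables $z=q(x,v)$, $s=\tau(x,v)$ is absorbed by Corollary \ref{cor:est_S_convex} applied with $b=p$ instead of $b=0$, while the remaining grazing integral in $\Gamma^-_z$ carrying the singular weight $N(z,v)^{-(p-1)}$ is controlled exactly as before, yielding $\diam(\O)^{1/p}\|h\|_{L^p_\a(\p\O\times\R^3)}$. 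This is precisely where $1\le p<2$ is used, since integrability near the grazing set requires $p-1<1$. Terms $II$ and $III$ are handled like Lemma \ref{estimate on Lp}: as $|\nabla_v\nu|\lesssim 1$, the extra weight $s$ is harmless because Corollary \ref{cor:est_S_convex} gives $\int_0^\tau s\,e^{-\nu_0 s}\,ds\lesssim\min\{1,\diam(\O)/|v|\}\le\diam(\O)/|v|$, so the same H\"older-and-change-of-variables computation as in Lemma \ref{estimate on Lp} (using Corollary \ref{cor:est_K_alpha} and Corollary \ref{cor:est_K_alpha*} with the splitting constant $\a_1$ of \eqref{alpha1_cond1}--\eqref{alpha1_cond2}) produces $\diam(\O)^{1/p}\|h\|_{L^p_\a(\O\times\R^3)}$ and $\diam(\O)^{1/p}\|\nabla_x h\|_{L^p_\a(\O\times\R^3)}\le\diam(\O)^{1/p}\|h\|_{W^{1,p}_\a(\O\times\R^3)}$, respectively.

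The main obstacle is term $IV$, which is responsible for the non-small summand $\|h\|_{L^p_\a(\O\times\R^3)}$. Here the kernel $\nabla_v k$ carries the stronger singularity $|v-v^*|^{-2}$ together with the growth factor $1+|v|$, and one cannot extract a factor of $\diam(\O)$: using the branch $\int_0^\tau e^{-\nu_0 s}\,ds\le\diam(\O)/|v|$ as in Lemma \ref{estimate on Lp} would create a factor $|v|^{-1}$ that combines with $|v-v^*|^{-2}$ to total homogeneity $3$, which is exactly non-integrable (the hypothesis $\mu_1+\mu_2<3$ of Lemma \ref{lem:est_K_alpha*} fails at equality). Instead I would use the bounded branch $\int_0^\tau e^{-\nu_0 s}\,ds\le 1/\nu_0$, so that no factor $|v|^{-1}$ is created, and then mimic the $1<p<\infty$ computation of Lemma \ref{estimate on Lp} with $k$ replaced by $\nabla_v k$: after H\"older in $s$ and the change of variables of Lemma \ref{change of variable lemma}, the inner $v^*$-H\"older splitting with weights $e^{\pm p\a_1|v^*|^2}$ is closed using Corollary \ref{cor:est_Kdv_alpha} (with $\mu=1$, $a=-p'\a_1$) for the factor integrated in $v^*$ and Corollary \ref{cor:est_Kdv_alpha*2} (with $\mu=1$, $a=p(\a-\a_1)$) for the factor integrated in the velocity variable; both apply since $\mu=1<3/2$, and the admissibility of $\a_1$ is the same as in \eqref{alpha1_cond1'}--\eqref{alpha1_cond2'}. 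The decisive gain is the factor $(1+|v^*|)^{-1}\le 1$ furnished by Corollary \ref{cor:est_Kdv_alpha*2}, which offsets the $1+|v|$ growth of $\nabla_v k$ and leaves exactly $\|h\|_{L^p_\a(\O\times\R^3)}$; for $p=1$ the H\"older step is dropped and Corollary \ref{cor:est_Kdv_alpha*2} is applied directly after Fubini. Collecting $I$ through $IV$ then gives the three claimed summands.
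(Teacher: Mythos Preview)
Your proposal is correct and follows essentially the same approach as the paper: the same four-term decomposition, the same reuse of the boundary argument from Lemma \ref{estimate on W1px for 1 to 2} for term $I$, the same Lemma \ref{estimate on Lp}-type argument for $II$ and $III$, and the same use of the bounded branch $\int_0^\tau e^{-\nu_0 s}\,ds\le 1/\nu_0$ together with Corollaries \ref{cor:est_Kdv_alpha} and \ref{cor:est_Kdv_alpha*2} for $IV$. The only cosmetic difference is that for the extra factor $\tau$ in term $I$ the paper absorbs $\tau^p$ pointwise via $\tau^p e^{-\frac{p}{2}\nu_0\tau}\lesssim 1$, whereas you push it into the $s$-integral and invoke Corollary \ref{cor:est_S_convex} with $b=p$; the two are equivalent.
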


Unlike the estimate for $\nabla_x S_\O K h$, we have the term $\| h \|_{L^p_\a(\O \times \R^3)}$ in the estimate in Lemma \ref{estimate on W1pv for 1 to 2}. It appears because the function $\nabla_v k$ has a stronger singularity than $k$ so that it cannot absorb a singularity of the function $\tau(x, v)$ at $v = 0$.

\begin{proof}
By the straightforward computation, we obtain 
\begin{equation} \label{eq:SKv}
\nabla_{v}S_{\Omega}Kh = S_{\O, v} K h - (\nabla_v \nu) S_{\O, s} K h + S_\O K_v h - S_{\O,s} K \nabla_x h,
\end{equation}
where
\begin{align}
S_{\O, v} h(x, v) :=& (\nabla_{v}\tau(x, v)) e^{-\nu(v)\tau(x, v)} h(q(x,v),v), \label{Sv}\\
S_{\O, s} h(x, v) :=& \int_{0}^{\tau(x, v)} s e^{-\nu(v)s} h(x-sv,v)\,ds, \label{Ss}\\
K_v h(x, v) :=& \int_{\mathbb{R}^{3}}\nabla_{v}k(v,v^{*}) h(x,v^{*})\,dv^{*}. \label{Kv}
\end{align}

Notice that, from \cite{GuoKim}, we have 
\begin{equation} \label{tau_dv}
| \nabla_{v}\tau(x, v) | \leq \frac{| x-q(x,v) | |  n(q(x,v)) | }{|v|^{2}N_-(x,v)} = |\nabla_x \tau(x, v)| \tau(x, v).
\end{equation}
Since
\[
\tau(x, v)^p e^{-\frac{p}{2} \nu(v) \tau(x, v)} \leq \sup_{t > 0} t^p e^{-\frac{p}{2} \nu_0 t} \lesssim 1,
\]
we have 
\begin{align*}
\| S_{\O, v} K h \|_{L^p_\a(\O \times \R^3)}^p =& \int_{\Omega \times \mathbb{R}^{3}} | (\nabla_{v}\tau(x, v))e^{-\nu(v)\tau(x, v)}Kh(q(x,v),v) |^p e^{p \a |v|^2}\,dxdv\\
\lesssim& \int_{\Omega \times \mathbb{R}^{3}} |\nabla_x \tau(x, v)|^p e^{-\frac{p}{2} \nu_0\tau(x, v)}| Kh(q(x,v),v) |^p e^{p \a |v|^2}\,dxdv.
\end{align*}
We can give an estimate for the above inequality in the same way as in the proof of Lemma \ref{estimate on W1px for 1 to 2} to obtain
\begin{equation} \label{est:SKv1}
\| S_{\O, v} K h \|_{L^p_\a(\O \times \R^3)}^p \lesssim \diam(\Omega) \| h \|_{L^p_\a(\partial\Omega \times \R^3)}^p
\end{equation}
for $1 \leq p < 2$ and $0 \leq \a < (1 - \rho)/2$.

For the second and the forth term, we have
\begin{align*}
&\| S_{\O, s} K h \|_{L^1(\O \times \R^3)} \\
=&\int_{\Omega \times \mathbb{R}^{3}} \left| \int_{0}^{\tau(x, v)} s e^{-\nu(v)s} \nabla_{v}\nu(v)Kh(x-sv,v)ds \right| e^{\a |v|^2}\,dxdv\\
\lesssim& \int_{\mathbb{R}^{3}}\int_{\Omega} \left( \int_{0}^{\tau(x, v)} e^{-\frac{\nu_0}{2} s} \left| Kh(x-sv,v)\right| \,ds \right) e^{\a |v|^2} \,dxdv\\
\lesssim& \diam(\O) \| h \|_{L^1_\a(\O \times \R^3)},
\end{align*}
and
\begin{align*}
&\| S_{\O, s} K h \|_{L^p(\O \times \R^3)}^p \\
=&\int_{\Omega \times \mathbb{R}^{3}} \left| \int_{0}^{\tau(x, v)} s e^{-\nu(v)s} Kh(x-sv,v)ds \right|^p e^{p \a |v|^2}\,dxdv\\
\lesssim&\int_{\mathbb{R}^{3}}\int_{\Omega} \left( \int_{0}^{\tau(x, v)} s^{p'} e^{-\nu_0 s} \,ds \right)^{\frac{p}{p'}} \left( \int_{0}^{\tau(x, v)} e^{-\nu_0 s} \left| Kh(x-sv,v)\right|^p \,ds \right) e^{p \a |v|^2} \,dxdv
\end{align*}
for $1 < p < 2$. By Corollary \ref{cor:est_S_convex} and the proof of Lemma \ref{estimate on Lp}, we have 
\begin{align*}
\| S_{\O, s} K h \|_{L^p(\O \times \R^3)}^p \lesssim&\int_{\mathbb{R}^{3}}\int_{\Omega} \int_{0}^{\tau(x, v)} e^{-\nu_0 s} \left| Kh(x-sv,v)\right|^p e^{p \a |v|^2} \,ds \,dxdv\\
\lesssim& \diam(\O) \| h \|_{L^p_\a(\O \times \R^3)}^p.
\end{align*}
Recalling the assumption \eqref{AD'}, we have
\begin{equation} \label{est:SKv2}
\| (\nabla_v \nu) S_{\O, s} K h \|_{L^p(\O \times \R^3)} \lesssim \diam(\O)^\frac{1}{p} \| h \|_{L^p_\a(\O \times \R^3)}
\end{equation}
and
\begin{equation} \label{est:SKv4}
\| S_{\O, s} K \nabla_x h \|_{L^p(\O \times \R^3)} \lesssim \diam(\O)^\frac{1}{p} \| \nabla_x h \|_{L^p_\a(\O \times \R^3)}
\end{equation}
for $1 \leq p < 2$ and $0 \leq \a < (1 - \rho)/2$.

For the third term, by Lemma \ref{change of variable lemma}, we obtain
\begin{align*}
&\| S_\O K_v h \|_{L^1_\a(\O \times \R^3)}\\ 
=&\int_{\Omega \times \mathbb{R}^{3}} \left| \int_{0}^{\tau(x, v)}e^{-\nu(v)s} \int_{\mathbb{R}^{3}}\nabla_{v}k(v,v^{*})h(x-sv,v^{*})dv^{*}ds \right| e^{\a |v|^2}\,dxdv\\
\lesssim& \int_{\mathbb{R}^{3}} \int_{\Omega} \left| \int_{\mathbb{R}^{3}} \nabla_{v} k(u,v^{*}) h(y,v^{*})dv^{*} \right| e^{\a |v|^2} \,dydu\\
\leq& \int_{\R^3} \int_\O \left( \int_{\R^3} |\nabla_v k(u, v^*)| e^{\a |u|^2}\,du \right) |h(y, v^*)|\,dudv^*\\
\lesssim& \int_{\mathbb{R}^{3}} \int_{\Omega} |h(y,v^{*})| e^{\a |v^*|^2}\,dv^{*} \,dy
\end{align*}
and
\begin{align*}
&\| S_\O K_v h \|_{L^p_\a(\O \times \R^3)}^p\\ 
=&\int_{\Omega \times \mathbb{R}^{3}} \left| \int_{0}^{\tau(x, v)}e^{-\nu(v)s} \int_{\mathbb{R}^{3}}\nabla_{v}k(v,v^{*})h(x-sv,v^{*})dv^{*}ds \right|^p e^{p \a |v|^2}\,dxdv\\
\lesssim& \int_{\mathbb{R}^{3}} \int_{\Omega} \left| \int_{\mathbb{R}^{3}} \nabla_{v} k(u,v^{*}) h(y,v^{*}) dv^{*} \right|^p e^{p \a |u|^2} \,dydu\\
\leq& \int_{\mathbb{R}^{3}} \int_{\Omega} \left( \int_{\R^3} |\nabla_v k(u, v^*)| e^{-p' \a_1 |v^*|^2}\,dv^* \right)^{\frac{p}{p'}}\\
&\times \left( \int_{\mathbb{R}^{3}} |\nabla_{v} k(u,v^{*})| |h(y,v^{*})|^p\,e^{p \a_1 |v^*|^2} dv^{*} \right) e^{p \a |u|^2} \,dydu\\
\lesssim& \int_{\mathbb{R}^{3}} \int_{\Omega} \left( \int_{\mathbb{R}^{3}} |\nabla_{v} k(u,v^{*})| e^{p(\a - \a_1)|u|^2}\,du \right) |h(y,v^{*})|^p\,e^{p \a_1 |v^*|^2} dv^{*}\,dy\\
\lesssim& \int_{\mathbb{R}^{3}} \int_{\Omega} |h(y,v^{*})|^p\,e^{p \a |v^*|^2} dv^{*}\,dy,
\end{align*}
where $\a_1$ is the constant satisfying \eqref{alpha1_cond1} and \eqref{alpha1_cond2}. Thus, we have
\begin{equation} \label{est:SKv3}
\| S_\O K_v h \|_{L^p_\a(\O \times \R^3)} \lesssim \| h \|_{L^p_\a(\O \times \R^3)}
\end{equation}
for $1 \leq p < 2$ and $0 \leq \a < (1 - \rho)/2$.

The proof of Lemma \ref{estimate on W1pv for 1 to 2} is complete.
\end{proof}

Lemma \ref{estimate on W1p for 1 to 2} follows from Lemma \ref{estimate on Lp}, Lemma \ref{estimate on W1px for 1 to 2} and Lemma \ref{estimate on W1pv for 1 to 2}.

Now, we give a proof of Lemma \ref{BLp_est}. We start from the following estimate, which was introduced in \cite{Grisvard}: there exists a positive constant depending on $\O$ such that
\[
\| h \|_{L^p(\p \O)} \leq C (\delta^{\frac{p-1}{p}} \| \nabla_x h \|_{L^p(\O)} + \delta^{-\frac{1}{p}} \| h \|_{L^p(\O)})
\]
for all $h \in W^{1, p}(\O)$ and $0 < \delta < 1$. We replace $h$ in the above estimate by $h(\cdot, v)$ for $h \in W^{1, p}_\a(\O \times \R^3)$ to obtain
\[
\| h(\cdot, v) \|_{L^p(\p \O)} \leq C \left( \delta^{\frac{p-1}{p}} \| \nabla_x h(\cdot, v) \|_{L^p(\O)} + \delta^{-\frac{1}{p}} \| h(\cdot, v) \|_{L^p(\O)} \right)
\]
for a.e. $v \in \R^3$. Taking the $p$-th power of the above inequality, multiplying by $e^{p \a |v|^2}$ and integrating with respect to $v$, we have
\[
\| h \|_{L^p_\a(\p \O \times \R^3)}^p \leq C \left( \delta^{p-1} \| \nabla_x h \|_{L^p_\a(\O \times \R^3)}^p + \delta^{-1} \| h \|_{L^p_\a(\O \times \R^3)}^p \right),
\]
which implies the estimate in Lemma \ref{BLp_est}.

The estimate in Lemma \ref{BLp_est} does not help our argument when $p = 1$. Instead, we introduce the following estimate, which was proved in \cite{Saintier}: For any $\delta > 0$, there exists a constant $C_\delta(\O) > 0$ such that
\[
\| h \|_{L^1(\p \O)} \leq (1 + \delta) \| \nabla_x h \|_{L^1(\O)} + C_\delta(\O) \| h \|_{L^1(\O)}
\]
for all $W^{1, 1}(\O)$. In the same way as above, replacing $h$ by $h(\cdot, v)$, multiplying by $e^{\a |v|^2}$ and integrating with respect to $v$, we obtain
\[
\| h \|_{L^1_\a(\p \O \times \R^3)} \leq (1 + \delta) \| \nabla_x h \|_{L^1_\a(\O \times \R^3)} + C_\delta(\O) \| h \|_{L^1_\a(\O \times \R^3)}
\]
for all $W^{1, 1}_\a(\O \times \R^3)$. This completes the proof of Lemma \ref{BL1_est}.

As we mentioned in the introduction, we derive \eqref{SB13} from Lemma \ref{estimate on W1pv for 1 to 2} with Lemma \ref{BLp_est} and Lemma \ref{BL1_est}. Summing it from $i = 1$ to $n$, we obtain
\begin{align*}
&\frac{1}{2} \sum_{i = 0}^n \| (S_\O K)^i Jg \|_{W^{1, p}_\a(\O \times \R^3)}\\ 
\leq& \sum_{i = 1}^n \left( \| (S_\O K)^i Jg \|_{W^{1, p}_\a(\O \times \R^3)} - \frac{1}{2} \| (S_\O K)^{i - 1} Jg \|_{W^{1, p}_\a(\O \times \R^3)} \right) + \| Jg \|_{W^{1, p}_\a(\O \times \R^3)}\\
\lesssim& \| Jg \|_{W^{1, p}_\a(\O \times \R^3)} + C_2(\O) \sum_{i = 1}^n \| (S_\O K)^i Jg \|_{L^p_\a(\O \times \R^3)}.
\end{align*}
With the help of Lemma \ref{estimate on Lp}, the above estimate converges as $n \to \infty$. Thus, the series \eqref{Picard} converges in $W^{1, p}_\a(\O \times \R^3)$ for fixed $1 \leq p <2$ and $0 \leq \a < (1 - \rho)/2$, which implies the existence of the $W^{1, p}_\a$ solution assuming $Jg \in W^{1, p}_\a(\O \times \R^3)$. On the other hand, if there exists a $W^{1, p}_\a$ solution to the integral equation \eqref{integral form}, then we have $Jg = f - S_\O K f \in W^{1, p}_\a(\O \times \R^3)$. This completes the proof of the first statement in Theorem \ref{main theorem}. 

\section{Regularity on small convex domains of positive Gaussian curvature} \label{sec:bounded_domain_B}

In this section, we deal with the second statement of Theorem \ref{main theorem}. It suffices to prove the estimate \eqref{SB13} for the case $2 \leq p < 3$. We cannot treat this case in the same way as Lemma \ref{estimate on W1px for 1 to 2} due to the singularity of $N^{-1}(z, v)$. In order to control it, we make use of Lemma \ref{circle lemma}. Eventually, we prove the following estimate.

\begin{lemma} \label{estimate on W1px for 2 to 3}
Let $2 \leq p < 3$ and $0 \leq \alpha < (1 - \rho)/2$, where $\rho$ is the constant in {\bf Assumption A}. Also, let $\O$ be a $C^2$ bounded convex domain of positive Gaussian curvature. Then, for $h \in W^{1, p}_\a(\O \times \R^3)$, we have
\[
\| \nabla_x S_\O K h \|_{L^p_\a(\O \times \R^3)} \lesssim \diam(\O)^\frac{1}{p} \| h \|_{W^{1, p}_\a(\O \times \R^3)} + C_3(\O)^{\frac{1}{p}} \| h \|_{L^p_\a(\partial \O \times \R^3)},
\]
where $C_3(\O)$ is the constant in Lemma \ref{circle lemma}.
\end{lemma}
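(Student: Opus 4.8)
The plan is to split, just as in the proof of Lemma \ref{estimate on W1px for 1 to 2},
\[
\nabla_x S_\O K h = S_\O K \nabla_x h + S_{\O, x} K h, \qquad S_{\O, x} h(x, v) = (\nabla_x \tau(x, v)) e^{-\nu(v) \tau(x, v)} h(q(x, v), v),
\]
and to dispose of the first term by Lemma \ref{estimate on Lp}, which produces the contribution $\diam(\O)^{1/p} \| \nabla_x h \|_{L^p_\a(\O \times \R^3)}$. For the second term I would use $\nabla_x \tau = - n(q(x,v)) / (N_-(x, v) |v|)$ together with the change of variables $z = q(x, v)$, $s = \tau(x, v)$; after collecting the Jacobian $N(z, v) |v|$ from \eqref{domain_to_boundary} one reaches
\[
\| S_{\O, x} K h \|_{L^p_\a(\O \times \R^3)}^p \lesssim \int_{\partial \O} \int_{\Gamma^-_z} \frac{1}{N(z, v)^{p - 1} |v|^{p - 1}} \left( \int_0^{\tau(z, -v)} e^{- p \nu_0 s}\,ds \right) |Kh(z, v)|^p e^{p \a |v|^2}\,dv\,d\Sigma(z).
\]
This is exactly the point reached for $1 \leq p < 2$, except that the grazing weight $N^{-(p-1)}$ now has exponent $p - 1 \geq 1$ and is no longer integrable across the grazing set $\{ v \cdot n(z) = 0 \}$, so the velocity-decomposition of Lemma \ref{estimate on W1px for 1 to 2} breaks down.

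This is precisely where the positive Gaussian curvature enters. Instead of estimating the inner integral by $\diam(\O)/|v|$ via Corollary \ref{cor:est_S_convex}, I would bound it trivially by $\tau(z, -v)$ and invoke Lemma \ref{circle lemma}, $\tau(z, -v) |v| = | z - q(z, -v)| \leq C_3(\O) N(z, v)$, to trade one full power of $N$:
\[
\frac{1}{N(z, v)^{p - 1} |v|^{p - 1}} \tau(z, -v) \leq \frac{C_3(\O)}{N(z, v)^{p - 2} |v|^p} = \frac{C_3(\O)}{|v|^2 |v_n|^{p - 2}},
\]
where $v_n = (v \cdot n(z)) n(z)$ so that $|v_n| = N(z, v) |v|$. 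The grazing exponent is now $p - 2 \in [0, 1)$, hence integrable, at the price of an $|v|^{-2}$ singularity at $v = 0$, which is itself integrable in $\R^3$. Thus it suffices to prove the weighted velocity estimate
\[
\int_{\Gamma^-_z} \frac{1}{|v|^2 |v_n|^{p - 2}} |Kh(z, v)|^p e^{p \a |v|^2}\,dv \lesssim \int_{\R^3} |h(z, v)|^p e^{p \a |v|^2}\,dv \qquad (\star)
\]
uniformly in $z \in \partial \O$; integrating $(\star)$ over $\partial \O$ and taking $p$-th roots yields the claimed $C_3(\O)^{1/p} \| h \|_{L^p_\a(\partial \O \times \R^3)}$.

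The crude Hölder splitting used for $1 \leq p < 2$ does \emph{not} prove $(\star)$ when $p \geq 2$: carried out verbatim it produces a forbidden factor $|v^*|^{2 - p}$ in front of $|h(z, v^*)|^p$, reflecting the fact that the two singularities of the integrand, at $v = 0$ and at $v = v^*$, merge as $v^* \to 0$. To repair this I would apply Hölder in $v^*$ with the refined weight $m(v^*) = |v^*|^{-\sigma} e^{- p' \a_1 |v^*|^2}$, where $\a_1$ satisfies \eqref{alpha1_cond1}--\eqref{alpha1_cond2} and $\sigma = (p - 2)/(p - 1) \in [0, 1/2)$. Corollary \ref{cor:est_K_alpha*} — whose $|v^*|^{-\mu_1}$ factor is tailored for exactly this purpose — gives $\int |k(v, v^*)| m(v^*)\,dv^* \lesssim e^{- p' \a_1 |v|^2}$, so after exchanging the order of integration and using Corollary \ref{cor:est_bound_k} with $a = p(\a - \a_1)$ to absorb the Gaussian weights, $(\star)$ reduces to showing
\[
G(v^*) := \int_{\Gamma^-_z} \frac{1}{|v|^2 |v_n|^{p - 2}} |v - v^*|^{-1} e^{- \alpha_{1, p(\a - \a_1), \rho} |v - v^*|^2}\,dv \lesssim |v^*|^{-(p - 2)}.
\]
The exponent $\sigma$ is chosen precisely so that the surviving factor $m(v^*)^{-p/p'} = |v^*|^{p - 2} e^{p \a_1 |v^*|^2}$ cancels the singularity of $G$ and restores the clean weight $e^{p \a |v^*|^2}$ on $|h(z, v^*)|^p$.

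The main obstacle is the estimate $G(v^*) \lesssim |v^*|^{-(p-2)}$. The exponent is dictated by scaling: discarding the Gaussian, the integrand is jointly homogeneous of degree $-(p + 1)$, so integration in $dv$ yields degree $2 - p = -(p - 2)$ in $v^*$; moreover the only genuine singularity of $G$ is at $v^* = 0$, since for $v^*$ bounded away from the origin the Gaussian localizes $v$ near $v^*$ and $G$ stays bounded and decaying, whence $G(v^*) \lesssim |v^*|^{-(p-2)}$ globally. I would establish this by splitting $\Gamma^-_z$ into $\{ |v| \leq |v - v^*| \}$ and $\{ |v| \geq |v - v^*| \}$ in the manner of Lemma \ref{lem:est_K_alpha*}, adapted to accommodate the extra grazing factor $|v_n|^{-(p-2)}$: writing $v = v_n + v_t$, the radial integral $\int_0^\infty (r^2 + \rho^2)^{-1} r^{-(p-2)}\,dr \sim \rho^{1 - p}$ converges because $p - 2 < 1$, and the ensuing tangential integral converges because $p < 3$, which is exactly the borderline at which the origin weight $|v^*|^{-(p-2)}$ becomes critical — consistent with the counterexample of Lemma \ref{lem:optimality3} at $p = 3$.
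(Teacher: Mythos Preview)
Your reduction is correct and matches the paper up to the velocity estimate $(\star)$: the splitting $\nabla_x S_\O K h = S_\O K\nabla_x h + S_{\O,x}Kh$, the change of variables to the boundary, and the use of Lemma~\ref{circle lemma} to lower the grazing exponent from $p-1$ to $p-2$ are exactly the paper's moves. The divergence begins at $(\star)$, and there your argument has two genuine gaps.

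First, the claimed bound $G(v^*)\lesssim|v^*|^{-(p-2)}$ is \emph{false} at the endpoint $p=2$. With $p=2$ the grazing factor disappears and $G(v^*)=\int_{\Gamma^-_z}|v|^{-2}|v-v^*|^{-1}e^{-c|v-v^*|^2}\,dv$; the singularities at $v=0$ and $v=v^*$ coalesce as $v^*\to0$, and scaling $v=|v^*|w$ leaves a tail $\int^{1/|v^*|}r^{-1}\,dr\sim\log(1/|v^*|)$. Thus $G$ is unbounded near the origin and your choice $\sigma=0$ cannot close the estimate there. Second, for $2<p<3$ the bound on $G$ is in fact true, but your proposed proof via the normal/tangential decomposition collapses: bounding $|v-v^*|^{-1}e^{-c|v-v^*|^2}\le|v_t-v_t^*|^{-1}e^{-c|v_t-v_t^*|^2}$ and integrating out $|v_n|$ leaves the two-dimensional integral $\int_{\R^2}|v_t|^{1-p}|v_t-v_t^*|^{-1}e^{-c|v_t-v_t^*|^2}\,dv_t$, which diverges whenever $v^*$ is normal to $\p\O$ (then $v_t^*=0$ and $|v_t|^{-p}$ is not locally integrable in $\R^2$ for $p\ge2$). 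A correct proof of $G(v^*)\lesssim|v^*|^{-(p-2)}$ must keep the full three-dimensional structure; the scaling heuristic you mention is the right idea, but it still requires a uniform bound on $\int_{\Gamma^-_z}|w|^{-2}|w_n|^{-(p-2)}|w-\hat v^*|^{-1}\,dw$ over $\hat v^*\in S^2$, including the tangential directions.

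The paper sidesteps both difficulties with a simpler H\"older split. For $2<p<3$ it writes
\[
|Kh(z,v)|^p\le\Bigl(\int|k|^{p'}e^{-p'\a|v^*|^2}\,dv^*\Bigr)^{p-1}\int|h(z,v^*)|^pe^{p\a|v^*|^2}\,dv^*
\]
and invokes Corollary~\ref{cor:est_K_alpha} (valid because $p'<2$) to gain an extra factor $(1+|v|)^{-(p-1)}$; the remaining integral $\int|v|^{-p}(1+|v|)^{-(p-1)}N(z,v)^{-(p-2)}\,dv$ is then shown to be finite by explicit spherical coordinates, precisely for $2<p<3$. At $p=2$ that integral diverges logarithmically at infinity, so the paper treats the endpoint separately with the unequal split $k=k^{3/4}\cdot k^{1/4}$, applying Corollary~\ref{cor:est_K_alpha} to $|k|^{3/2}$ and Corollary~\ref{cor:est_K_alpha*} (with $\mu_1=2$, $\mu_2=\tfrac12$) to $|v|^{-2}|k|^{1/2}$. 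The virtue of the paper's route is that after H\"older the $v$-integral decouples from $v^*$ entirely, so no estimate of the type $G(v^*)\lesssim|v^*|^{-(p-2)}$ is ever needed.
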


\begin{proof}
As in the case of Lemma \ref{estimate on W1px for 1 to 2}, we already have 
\[
\| S_\O K \nabla_x h \|_{L^p_\a(\O \times \R^3)} \lesssim \diam(\O)^{\frac{1}{p}} \| \nabla_x h \|_{L^p_\a(\O \times \R^3)}
\] 
for $2 \leq p < 3$ and $0 \leq \a < (1 - \rho)/2$. Thus, in what follows, we give an estimate for $S_{\O, x} K h$. In particular, we modify the estimate \eqref{est:Lp_domain_to_boundary} for $2 \leq p < 3$. Using the estimate
\[
\int_0^{\tau(z, -v)} e^{-p \nu_0 s}\,ds \leq \tau(z, -v) = \frac{|z - q(z, -v)|}{|v|}
\]
and Lemma \ref{circle lemma}, we get
\begin{align*}
&\int_{\mathbb{R}^{3}}\int_{\Omega} | (\nabla_x \tau(x, v)) e^{-\nu(v)\tau(x, v)}Kh(q(x,v),v) |^p e^{p \a |v|^2} \,dxdv\\
\lesssim&\int_{\mathbb{R}^{3}}\frac{1}{|v|^p}\int_{\Gamma^-_v} \frac{1}{N(z,v)^{p - 1}}| Kh(z,v) |^p | z-q(z,-v) | e^{p \a |v|^2}\,d\Sigma(z)dv\\
\lesssim&C_3(\O) \int_{\mathbb{R}^{3}}\frac{1}{|v|^p} \int_{\Gamma^-_v} \frac{1}{N(z,v)^{p - 2}}| Kh(z,v) |^p e^{p \a |v|^2}\,d\Sigma(z)dv.
\end{align*}

We first give an estimate for $2 < p < 3$. In this case, since $3/2 < p' < 2$, we can apply Corollary \ref{cor:est_K_alpha} to obtain
\begin{align*}
| Kh(z,v) |^p \leq& \left( \int_{\R^3} |k(v, v^*)|^{p'} e^{-p' \a |v^*|}\,dv^* \right)^{\frac{p}{p'}} \left( \int_{\R^3} |h(z, v^*)|^p e^{p \a |v^*|^2}\,dv^* \right)\\
\lesssim& (1 + |v|)^{-(p - 1)} e^{- p \a |v|^2} \int_{\R^3} |h(z, v^*)|^p e^{p \a |v^*|^2}\,dv^*.
\end{align*}
Hence, we have
\begin{align*}
&\int_{\mathbb{R}^{3}} \frac{1}{|v|^p} \int_{\Gamma^-_v} \frac{1}{N(z,v)^{p - 2}}| Kh(z,v) |^p e^{p \a |v|^2}\,d\Sigma(z)dv\\
\lesssim& \int_{\partial \O} \left( \int_{\Gamma^-_z} \frac{1}{|v|^p} \frac{1}{(1 + |v|)^{p - 1}} \frac{1}{N(z,v)^{p - 2}}\,dv \right) \left( \int_{\R^3} |h(z, v^*)|^p e^{p \a |v^*|^2}\,dv^* \right)\,d\Sigma(z).
\end{align*}

In what follows, we show that the inner integral with respect to $v$ is uniformly bounded with respect to $z$. For fixed $z \in \partial\Omega$, we introduce the spherical coordinates $v= (r \sin\theta \cos\phi, r \sin\theta \cos\phi, r \cos\theta)$ so that $\theta=0$ corresponds to the $-n(z)$ direction. Then, we have
\begin{align*}
&\int_{\mathbb{R}^{3}} \frac{1}{|v|^{p}} \frac{1}{N(z,v)^{p - 2}} \frac{1}{(1 + |v|)^{p - 1}}\,dv\\ 
=& 2 \pi \int_0^\infty \left( \int_0^{\pi/2} \frac{1}{r^p} \frac{1}{\cos^{p-2}\theta} \frac{1}{(1 + r)^{p - 1}}r^2 \sin\theta\,d\theta \right)\,dr\\
=& 2\pi \left( \int_0^\infty \frac{1}{r^{p - 2}} \frac{1}{(1 + r)^{p - 1}} \,dr \right) \left( \int_0^1 \frac{1}{t^{p-2}}\,dt \right).
\end{align*}
Here, we change a variable of integration $t = \cos \theta$. Since $2 < p < 3$, we have $0 < p - 2 < 1$, $p - 2 + p - 1 = 2 p - 3 > 1$, and $0 \leq p - 2 < 1$. Thus, we have
\[
\left( \int_0^\infty \frac{1}{r^{p - 2}} \frac{1}{(1 + r)^{p - 1}} \,dr \right) \left( \int_0^1 \frac{1}{t^{p-2}}\,dt \right) \lesssim 1.
\]
We remark that the above estimate is independent of the choice of the point $z \in \p \O$. Therefore, we have
\begin{align*}
&\int_{\mathbb{R}^{3}}\int_{\Omega} | (\nabla_x \tau(x, v)) e^{-\nu(v)\tau(x, v)}Kh(q(x,v),v) |^p e^{p \a |v|^2}\,dxdv\\ 
\lesssim& C_3(\O) \int_{\mathbb{R}^{3}}\int_{\partial\Omega}|h(z,v^{*})|^p e^{p \a |v^*|^2}\,d\Sigma(z)dv^{*}.
\end{align*}

The above argument does not hold for $p = 2$ since $2 p - 3 = 1$, which causes the divergence of the $r$ integral. In order to treat this case, we consider a different estimate. We note that
\begin{align*}
|Kh(z, v)|^2 \leq& \left( \int_{\R^3} |k(v, v^*)|^{\frac{3}{2}} e^{-2 \a_2 |v^*|^2}\,dv^* \right)\\
&\times \left( \int_{\R^3} |k(v, v^*)|^{\frac{1}{2}} |h(z, v^*)|^2 e^{2 \a_2 |v^*|^2}\,dv^* \right)\\ 
\lesssim& \left( \int_{\R^3} |k(v, v^*)|^{\frac{1}{2}} |h(z, v^*)|^2 e^{2 \a_2 |v^*|^2}\,dv^* \right) e^{-2 \a_2 |v|^2},
\end{align*}
where $\a_2$ is the constant satisfying
\begin{equation} \label{alpha2_cond1}
-\frac{3(1 - \rho)}{4} < - 2 \a_2 < \frac{3(1 - \rho)}{4}
\end{equation}
and
\begin{equation} \label{alpha2_cond2}
-\frac{1 - \rho}{4} < 2 (\a - \a_2) < \frac{1 - \rho}{4}.
\end{equation}
We can show the existence of such a constant $\a_2$ in the same way as for $\a_1$ satisfying \eqref{alpha1_cond1} and \eqref{alpha1_cond2}. Thus, we have
\begin{align*}
&\int_{\mathbb{R}^{3}} \frac{1}{|v|^2} \int_{\Gamma^-_v} | Kh(z,v) |^2 e^{2 \a |v|^2}\,d\Sigma(z)dv\\
\lesssim& \int_{\mathbb{R}^{3}} \frac{1}{|v|^2} \int_{\Gamma^-_v} \left( \int_{\R^3} |k(v, v^*)|^{\frac{1}{2}} |h(z, v^*)|^2 e^{2 \a_2 |v^*|^2}\,dv^* \right) e^{2 (\a - \a_2) |v|^2}\,d\Sigma(z)dv\\
\leq& \int_{\R^3} \int_{\partial \O} \left( \int_{\R^3} \frac{1}{|v|^2} |k(v, v^*)|^{\frac{1}{2}} e^{2 (\a - \a_2) |v|^2}\,dv \right) |h(z, v^*)|^2 e^{2 \a_2 |v^*|^2}\,d\Sigma(z)dv^*\\
\lesssim& \int_{\R^3} \int_{\partial \O} |h(z, v^*)|^2 e^{2 \a |v^*|^2}\,d\Sigma(z)dv^*.
\end{align*}
This completes the proof.
\end{proof}

For the $v$ derivative, we have the following estimate.

\begin{lemma} \label{estimate on W1pv for 2 to 3}
Let $2 \leq p < 3$ and $0 \leq \alpha < (1 - \rho)/2$, where $\rho$ is the constant in {\bf Assumption A}. Also, let $\O$ be a $C^2$ bounded convex domain of positive Gaussian curvature. Then, for $h \in W^{1, p}_\a(\O \times \R^3)$, we have

\begin{align*}
\| \nabla_v S_\O K h \|_{L^p_\a(\O \times \R^3)} \lesssim& \diam(\O)^\frac{1}{p} \| h \|_{W^{1, p}_\a(\O \times \R^3)} + \| h \|_{L^p_\a(\O \times \R^3)}\\
&+ C_3(\O)^{\frac{1}{p}} \| h \|_{L^p_\a(\partial \O \times \R^3)},
\end{align*}
where $C_3(\O)$ is the constant in Lemma \ref{circle lemma}.
\end{lemma}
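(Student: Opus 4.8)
The plan is to follow the proof of Lemma~\ref{estimate on W1pv for 1 to 2} line by line, starting from the same decomposition \eqref{eq:SKv},
\[
\nabla_{v}S_{\Omega}Kh = S_{\O, v} K h - (\nabla_v \nu) S_{\O, s} K h + S_\O K_v h - S_{\O,s} K \nabla_x h,
\]
with $S_{\O,v}$, $S_{\O,s}$ and $K_v$ as in \eqref{Sv}--\eqref{Kv}, and estimating the four terms separately. The key observation is that only the first term $S_{\O,v}Kh$ carries the singular factor $N_-(x,v)^{-1}$ coming from $\nabla_v\tau$; the other three involve no boundary trace, and their bounds \eqref{est:SKv2}, \eqref{est:SKv3} and \eqref{est:SKv4} were obtained in the proof of Lemma~\ref{estimate on W1pv for 1 to 2} using only Corollary~\ref{cor:est_S_convex}, the machinery of Lemma~\ref{estimate on Lp}, and the kernel corollaries applied with exponent $\mu=1$. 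Since none of these ingredients needs $p<2$ (Lemma~\ref{estimate on Lp} holds for all $1\le p<\infty$, and $\mu=1<3/2$ keeps us inside the admissible range of Corollary~\ref{cor:est_Kdv_alpha} and Corollary~\ref{cor:est_Kdv_alpha*2}), these three estimates remain valid verbatim for $2\le p<3$, contributing respectively $\diam(\O)^{1/p}\|h\|_{L^p_\a(\O\times\R^3)}$, $\|h\|_{L^p_\a(\O\times\R^3)}$, and $\diam(\O)^{1/p}\|\nabla_x h\|_{L^p_\a(\O\times\R^3)}$, all of which are absorbed into the right-hand side of the claimed inequality.

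It therefore remains to treat $S_{\O,v}Kh$ for $2\le p<3$, and this is where the positivity of the Gaussian curvature enters. Using \eqref{tau_dv}, namely $|\nabla_v\tau(x,v)|=|\nabla_x\tau(x,v)|\,\tau(x,v)$, together with $\tau(x,v)^p e^{-\frac p2\nu(v)\tau(x,v)}\lesssim1$, I would bound
\[
\| S_{\O, v} K h \|_{L^p_\a(\O \times \R^3)}^p \lesssim \int_{\Omega \times \mathbb{R}^{3}} |\nabla_x \tau(x, v)|^p e^{-\frac{p}{2} \nu_0\tau(x, v)} |Kh(q(x,v),v)|^p e^{p \a |v|^2}\,dxdv.
\]
After the change of variables $z=q(x,v)$, $s=\tau(x,v)$, this is exactly the integral analysed in the proof of Lemma~\ref{estimate on W1px for 2 to 3}: the only difference is the factor $e^{-\frac p2\nu_0 s}$ in place of $e^{-p\nu_0 s}$, which is harmless since in both cases one uses $\int_0^{\tau(z,-v)}e^{-cs}\,ds\le\tau(z,-v)=|z-q(z,-v)|/|v|$ before invoking Lemma~\ref{circle lemma} to replace $|z-q(z,-v)|$ by $C_3(\O)N(z,v)$. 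Hence the very same computation yields
\[
\| S_{\O, v} K h \|_{L^p_\a(\O \times \R^3)}^p \lesssim C_3(\O)\,\| h \|_{L^p_\a(\partial\O \times \R^3)}^p,
\]
and combining the four contributions gives the assertion after bounding $\|\nabla_x h\|_{L^p_\a}\le\|h\|_{W^{1,p}_\a}$.

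The main obstacle is thus entirely contained in this first term, and it is precisely the difficulty already resolved in Lemma~\ref{estimate on W1px for 2 to 3}: after the reduction one faces
\[
\int_{\mathbb{R}^{3}} \frac{1}{|v|^p} \int_{\Gamma^-_v} \frac{1}{N(z,v)^{p - 2}}|Kh(z,v)|^p e^{p \a |v|^2}\,d\Sigma(z)dv,
\]
which must be controlled by $\|h\|_{L^p_\a(\partial\O\times\R^3)}^p$. For $2<p<3$ the exponent $p-2\in(0,1)$ keeps $N^{-(p-2)}$ integrable across the grazing set, and one applies Corollary~\ref{cor:est_K_alpha} (legitimate since $p'\in(3/2,2)\subset(0,3)$) followed by an explicit spherical-coordinate computation whose radial and angular integrals both converge. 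The borderline case $p=2$ is the genuinely delicate point: there $N^{-(p-2)}=1$, yet the crude bound produces a divergent radial integral $\int\frac{dr}{(1+r)}$, so one must instead split $|Kh|^2$ by H\"older into factors $|k|^{3/2}$ and $|k|^{1/2}|h|^2$, extracting an extra Gaussian weight $e^{-2\a_2|v|^2}$ that restores convergence of the $v$-integral. As this whole argument has already been executed for the $x$-derivative, the present proof amounts to quoting it and assembling it with the three inherited bounds, requiring no new estimate.
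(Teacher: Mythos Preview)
Your proposal is correct and follows essentially the same approach as the paper: the paper's proof simply recalls the decomposition \eqref{eq:SKv}, observes that the estimates \eqref{est:SKv2}, \eqref{est:SKv3} and \eqref{est:SKv4} carry over to $2\le p<3$ unchanged, and for the boundary term $S_{\O,v}Kh$ invokes Lemma~\ref{estimate on W1px for 2 to 3} directly. Your write-up is more explicit about why each step goes through (in particular the reduction via $|\nabla_v\tau|=|\nabla_x\tau|\,\tau$ and the $p=2$ subtlety), but the logical structure is identical.
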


\begin{proof}
Recall the formula \eqref{eq:SKv}. We can see that the estimates \eqref{est:SKv2}, \eqref{est:SKv4} and \eqref{est:SKv3} hold for $2 \leq p < 3$ and $0 \leq \a < (1 - \rho)/2$. For the first term, by Lemma \ref{estimate on W1px for 2 to 3}, we have
\[
\| S_{\O, v} K h \|_{L^p_\a(\O \times \R^3)} \lesssim C_3(\O) \| h \|_{L^p_\a(\p \O \times R^3)}.
\]
Therefore, Lemma \ref{estimate on W1pv for 2 to 3} is proved.
\end{proof}

Lemma \ref{estimate on W1p for 2 to 3} follows from Lemma \ref{estimate on Lp}, Lemma \ref{estimate on W1px for 2 to 3} and Lemma \ref{estimate on W1pv for 2 to 3}.

Combining Lemma \ref{estimate on W1p for 2 to 3} with Lemma \ref{BLp_est}, we derive the estimate \eqref{SB13} for fixed $2 \leq p <3$ and $0 \leq \a < (1 - \rho)/2$, and this proves the convergence of the series \eqref{Picard} in $W^{1, p}_\a(\O \times \R^3)$ and the existence of the solution for the case where $Jg \in W^{1, p}_\a(\O \times \R^3)$. On the other hand, if the solution $f$ belongs to $W^{1, p}_\a (\Omega\times\mathbb{R}^{3})$, then $S_{\Omega}Kf$ also belongs to $W^{1, p}_\a (\Omega\times\mathbb{R}^{3})$, which with $f=Jg+S_{\Omega}Kf$ will make $Jg$ belong to $W^{1, p}_\a(\Omega\times\mathbb{R}^{3})$. This completes the proof of the second statement in Theorem \ref{main theorem}.

\section{Construction of counterexamples} \label{sec:counterexample}
\subsection{A counterexample for $p=2$}

In this subsection, we give a proof of Lemma \ref{lem:optimality2}. In other words, we shall show that, for fixed $1 \leq p < 2$ and $0 \leq \a < (1 - \rho)/2$, the solution to the boundary value problem \eqref{SLBE}-\eqref{inbdry} with the boundary data \eqref{inbdry_optimal_convex_2} belongs to $L^2_\a(\O \times \R^3) \cap W^{1, p}_\a(\O \times \R^3)$ for sufficiently small $\diam(\O)$ but this solution does not belong to $W^{1, 2}_\a(\O \times \R^3)$.

Let $\O$ be a bounded convex domain with partially flat boundary as described in Section \ref{sec:intro} and let the boundary data $g$ be given by \eqref{inbdry_optimal_convex_2}. We see that $g \in L^p_\alpha(\Gamma^-) \cap C_\alpha(\Gamma^-)$ for all $1 \leq p < \infty$ and $0 \leq \alpha < 1/2$. Thus, by Proposition \ref{prop:bound_J_alpha} and Proposition \ref{prop:bound_J_Calpha}, we have $Jg \in L^p_\alpha(\O \times \R^3) \cap C_\alpha((\O \times \R^3) \cup \Gamma^\pm)$ for all $1 \leq p < \infty$ and $0 \leq \alpha < (1 - \rho)/2$. Therefore, by Lemma \ref{estimate on Lp} and Lemma \ref{lem:small_second_convex}, the integral equation \eqref{integral form} has the unique solution $f$ in $L^p_\alpha(\O \times \R^3) \cap C_\alpha((\O \times \R^3) \cup \Gamma^\pm)$ if $\diam(\O)$ is sufficiently small. In particular, we see that $f \in L^2_\a(\O \times \R^3)$.

We show that $Jg \in W^{1, p}_\a(\O \times \R^3)$ for all $0 \leq \a < (1- \rho)/2$ and $1 \leq p < 2$. By differentiation, we have
\[
\nabla_x Jg = -\nu(v) (\nabla_x \tau(x, v)) Jg + (\nabla_x q) J(\nabla_X g),
\]
where $\nabla_X$ is the covariant derivative on $\p \O$. We note that $\nabla_x q(x, v)$ is a matrix of the form
\[
\nabla_x q(x, v) = I_3 - (\nabla_x \tau(x, v)) \otimes v.
\]
Here, $I_3$ is the identity matrix of order $3$. In what follows, we investigate singularity of the gradient $\nabla_x \tau(x, v)$.

\begin{lemma} \label{lem:tau_dx_flat}
Let $\Omega$ be a bounded convex domain satisfying \eqref{flat_boundary} and \eqref{halfball_contained}. Also, let $p \geq 1$. Then, the integral
\[
\int_{\O \times \R^3} |\nabla_x \tau(x, v)|^p |v|^b e^{-a |v|^2}\,dxdv
\]
converges for all $a > 0$ and $b \geq 0$ if and only if $p < 2$.
\end{lemma}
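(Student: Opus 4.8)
The plan is first to make the integrand explicit and then pass to the boundary. By the formula \eqref{tau_dx} for $\nabla_x \tau$ together with $|n|=1$, we have $|\nabla_x \tau(x,v)| = (N_-(x,v)|v|)^{-1}$, so the integrand equals $N_-(x,v)^{-p}|v|^{b-p}e^{-a|v|^2}$. Since this depends on $x$ only through $q(x,v)$, I would apply the change of variables $x = z + sv$ with $(z,v)\in\Gamma^-$ and $s\in(0,\tau(z,-v))$ as in \eqref{domain_to_boundary} and Lemma \ref{change of variable lemma}. The inner $s$-integral is then simply the chord length $\tau(z,-v)=|z-q(z,-v)|/|v|$, and, writing $N_-(x,v)=N(z,v)$, the integral becomes
\begin{equation*}
I := \int_{\R^3}\int_{\Gamma^-_v}\frac{|z-q(z,-v)|}{N(z,v)^{p-1}}\,|v|^{b-p}e^{-a|v|^2}\,d\Sigma(z)\,dv. \tag{$\star$}
\end{equation*}
All integrands are nonnegative, so Tonelli's theorem justifies every interchange below.

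For the convergence when $p<2$, I would bound the chord crudely by $|z-q(z,-v)|\le\diam(\O)$, interchange the order of integration in ($\star$) to place $z$ outside, and for each fixed $z$ decompose $v=v_n+v_t$ into its normal and tangential parts relative to $n(z)$, so that $N(z,v)|v|=|v_n|$. Setting $r=|v_n|$, the inner velocity integral reduces to
\[
\int_0^\infty\int_{\R^2}\frac{(r^2+|v_t|^2)^{\frac{b-1}{2}}}{r^{p-1}}\,e^{-a(r^2+|v_t|^2)}\,dv_t\,dr,
\]
which is independent of $z$ by rotation invariance. This is exactly the singularity $r^{-(p-1)}$ already handled in the proof of Lemma \ref{estimate on W1px for 1 to 2}: it is integrable near $r=0$ precisely when $p-1<1$, while a spherical-coordinate check near the origin (using $b\ge 0>p-3$) and the Gaussian decay at infinity show the integral is finite for $1\le p<2$ and every $a>0$, $b\ge0$. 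Integrating the resulting constant over the finite-area surface $\partial\O$ then yields $I<\infty$.

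For the divergence when $p\ge2$, I would discard all of ($\star$) except the contribution of the flat disk. Restrict $z$ to $D_{r_1/2}$ and $v$ to the nearly-tangent cone $\{v_1<0,\ |v_1|<\delta|v|,\ 1<|v|<2\}$; since $n\equiv(1,0,0)$ there, one has $N(z,v)=|v_1|/|v|$ constant on the disk and $n(z)\cdot v=v_1<0$, so the entire disk lies in $\Gamma^-_v$. The key geometric input is a uniform lower bound on the chord: for $z\in D_{r_1/2}$ and $\hat v$ nearly tangent, the segment $z+s\hat v$ stays in the half-ball $\{|x|<r_1,\ x_1<0\}\subset\O$ from \eqref{halfball_contained} for $0<s<r_1/2$, whence $|z-q(z,-v)|\ge r_1/2$. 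Substituting these facts into ($\star$) gives
\[
I\gtrsim\int_{\{v_1<0,\ |v_1|<\delta|v|,\ 1<|v|<2\}}\frac{|v|^{b-1}e^{-a|v|^2}}{|v_1|^{p-1}}\,dv\gtrsim\int_0^{\delta'}\frac{dt}{t^{p-1}},
\]
which diverges for every $a>0$ and $b\ge0$ as soon as $p-1\ge1$, i.e. $p\ge2$. The main obstacle is exactly this uniform chord bound: it is the step where the flatness of the boundary and the containment \eqref{halfball_contained} enter, and it is the estimate that fails on a strictly convex boundary. Indeed, there Lemma \ref{circle lemma} forces $|z-q(z,-v)|\lesssim N(z,v)$, which cancels one power of $N^{-1}$ in ($\star$) and pushes the threshold up to $p<3$; contrasting these two behaviours is the whole point of the lemma.
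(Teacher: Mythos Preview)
Your proposal is correct and follows essentially the same route as the paper: convert the integral to the boundary via \eqref{domain_to_boundary}, use $|\nabla_x\tau|=1/(N|v|)$, bound the chord $|z-q(z,-v)|$ from above by $\diam(\Omega)$ for convergence and from below by $r_1/2$ on the flat disk for divergence. The only cosmetic differences are that the paper uses spherical coordinates (yielding $\int_0^1 t^{1-p}\,dt$ directly) rather than your normal/tangential splitting, and for divergence the paper integrates over the full half-space $\Gamma^-_z$ rather than restricting to a nearly-tangent cone with $1<|v|<2$; note also that your chord lower bound actually holds for \emph{all} $v$ with $v_1<0$ and $z\in D_{r_1/2}$, so the nearly-tangent restriction is unnecessary (though harmless).
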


\begin{proof}
From \eqref{domain_to_boundary}, we have
\begin{align*}
&\int_{\O \times \R^3} |\nabla_x \tau(x, v)|^p |v|^b e^{-a p |v|^2}\,dxdv\\ 
=& \int_{\partial \O} \int_{\Gamma^-_z} \int_0^{\tau(z, -v)} |\nabla_x \tau(z + tv, v)|^p |v|^b e^{-a p |v|^2}\,dt N(z, v) |v|\,dvd\Sigma(z).
\end{align*}
From \eqref{tau_dx}, we see that 
\[
|\nabla_x \tau(z + tv, v)| = |\nabla_x \tau(z, v)| = \frac{1}{N(z, v)|v|}.
\] 
Thus, we have
\begin{align*}
&\int_{\partial \O} \int_{\Gamma^-_z} \int_0^{\tau(z, -v)} |\nabla_x \tau(z + tv, v)|^p |v|^b e^{-a p |v|^2}\,dt N(z, v) |v|\,dv d\Sigma(z)\\ 
=& \int_{\partial \O} \int_{\Gamma^-_z} \frac{\tau(z, -v)}{N(z, v)^{p-1} |v|^{p-b-1}} e^{-a p |v|^2}\,dv d\Sigma(z).
\end{align*}

Thanks to the boundedness of the domain, we have
\[
\tau(z, -v) \leq \frac{\diam(\O)}{|v|}, 
\]
and hence we have
\begin{align*}
&\int_{\partial \O} \int_{\Gamma^-_z} \int_0^{\tau(z, -v)} |\nabla_x \tau(z + tv, v)|^p |v|^b e^{-a p |v|^2}\,dt N(z, v) |v|\,dv d\Sigma(z)\\ 
\leq& \diam(\O) \int_{\partial \O} \int_{\Gamma^-_z} \frac{1}{N(z, v)^{p-1} |v|^{p-b}} e^{-a p |v|^2}\,dv d\Sigma(z).
\end{align*}

We fix the point $z \in \partial \O$ and compute the inner integral with respect to $v$. We introduce the spherical coordinates for $v$ so that $\theta = 0$ corresponding to the $-n(z)$ direction. Then, we have
\begin{align*}
\int_{\Gamma^-_z} \frac{1}{N(z, v)^{p-1} |v|^{p-b}} e^{-a p |v|^2}\,dv =& 2\pi \int_0^{\frac{\pi}{2}} \frac{\sin \theta}{\cos^{p-1} \theta}\,d\theta \int_0^\infty \rho^{2+b-p} e^{-a p \rho^2}\,d\rho\\
=& 2\pi \int_0^1 t^{1-p}\,dt \int_0^\infty \rho^{2+b-p} e^{-a p \rho^2}\,d\rho.
\end{align*}
The above integral converges for all $a > 0$ and $b \geq 0$ if $p < 2$.

On the other hand, under the assumptions \eqref{flat_boundary} and \eqref{halfball_contained}, we have
\begin{equation} \label{tau_flat}
\tau(z, -v) \geq \frac{r_1}{2|v|} 
\end{equation}
for $(z, v) \in \Gamma^-$ with $z \in D_{r_1/2}$. Hence, we have
\begin{align*}
&\int_{\O \times \R^3} |\nabla_x \tau(x, v)|^p |v|^b e^{-a p |v|^2}\,dxdv\\ 
\geq& \frac{r_1}{2} \int_{D_{r_1/2}} \int_{\Gamma^-_z} \frac{1}{N(z, v)^{p-1} |v|^{p-b}} e^{-a p |v|^2}\,dv d\Sigma(z). 
\end{align*}
In the same way as above, we can show that the right hand side diverges for all $a > 0$ and $b \geq 0$ if $p \geq 2$. 

Therefore, the integral we concern converges for all $a > 0$ and $b \geq 0$ if and only if $p < 2$. This completes the proof. 
\end{proof}

Let $g$ be the function on $\Gamma^-$ defined by \eqref{inbdry_optimal_convex_3}. Then, we have
\begin{equation} \label{Jg_dx_1}
|-\nu(v) (\nabla_x \tau(x, v)) Jg(x, v) e^{\a |v|^2}| \leq \nu_1 (1 + |v|) |\nabla_x \tau(x, v)| e^{-\left( \frac{1}{2} - \a \right) |v|^2} 
\end{equation}
and
\begin{equation} \label{Jg_dx_2}
|(\nabla_x q) J(\nabla_X g)(x, v) e^{\a |v|^2}| \leq (1 + |\nabla_x \tau(x, v)| |v|) \| \nabla_X \varphi_1 \|_{L^\infty(\p \O)} e^{- \left(\frac{1}{2} - \a \right)|v|^2}.
\end{equation}
Thus, by Lemma \ref{lem:tau_dx_flat}, we see that $\nabla_x Jg \in L^p_\a(\O \times \R^3)$ for all $0 \leq \a < (1 - \rho)/2$ and $1 \leq p < 2$.

For the $v$ derivative, we have
\[
\nabla_v Jg = - \tau(x, v) (\nabla_v \nu(v)) Jg - \nu(v) (\nabla_v \tau(x, v)) Jg + (\nabla_v q) J (\nabla_X g) + J(\nabla_v g). 
\]
The matrix $\nabla_v q$ reads
\[
\nabla_v q(x, v) = - \tau(x, v) I_3 - (\nabla_v \tau(x, v)) \otimes v.
\]

For the function $g$ defined by \eqref{inbdry_optimal_convex_2}, recalling \eqref{tau_dv}, we have
\begin{align}
&|- \tau(x, v) (\nabla_v \nu(v)) Jg(x, v) e^{\a |v|^2}| \lesssim e^{-\left( \frac{1}{2} - \a \right) |v|^2}, \label{Jg_dv_1}\\
&|- \nu(v) (\nabla_v \tau(x, v)) Jg(x, v) e^{\a |v|^2}| \lesssim (1 + |v|) |\nabla_x \tau(x, v)| e^{-\left( \frac{1}{2} - \a \right) |v|^2}, \label{Jg_dv_2}\\
&|(\nabla_v q) J (\nabla_X g)(x, v) e^{\a |v|^2}| \lesssim (1 + |\nabla_x \tau(x, v)| |v|) \| \nabla_X \varphi_1 \|_{L^\infty(\p \O)} e^{-\left( \frac{1}{2} - \a \right) |v|^2}, \label{Jg_dv_3}\\
&|J(\nabla_v g) e^{\a |v|^2}| \lesssim |v| e^{-\left( \frac{1}{2} - \a \right) |v|^2}. \label{Jg_dv_4}
\end{align}
Thus, we apply Lemma \ref{lem:tau_dx_flat} again to obtain that $\nabla_v Jg$ belongs to $L^p_\a(\O \times \R^3)$ for all $0 \leq \a < (1 - \rho)/2$ and $1 \leq p < 2$. Therefore, we have checked that the function $Jg$ with the boundary data $g$ defined by \eqref{inbdry_optimal_convex_2} belongs to $W^{1, p}_\a(\O \times \R^3)$ for all $0 \leq \a < (1 - \rho)/2$ and $1 \leq p < 2$ and, if $\diam(\O)$ is small enough, the corresponding solution $f$ to the boundary value problem \eqref{SLBE}-\eqref{inbdry} also belongs to $W^{1, p}_\a(\O \times \R^3)$.

In what follows, we show that the solution $f$ does not belong to $W^{1, 2}_\a(\O \times \R^3)$ for all $0 \leq \a < (1 - \rho)/2$. To this aim, we assume that the solution belongs to $W^{1, 2}_\a(\O \times \R^3)$, and derive a contradiction. We formally differentiate the equation \eqref{integral form} with respect to $x$ to obtain
\begin{equation*}
\begin{split}
\nabla_x f(x, v) =& -\nu(v) (\nabla_x \tau(x, v)) Jg(x, v) + (\nabla_x q(x, v)) J (\nabla_X g)(x, v)\\
&+ S_{\O, x} K f(x, v) + S_\O K (\nabla_x f) (x, v).
\end{split}
\end{equation*}
By assumption, we see that $S_\O K (\nabla_x f) \in L^2_\a(\O \times \R^3)$, and therefore the integral
\begin{equation} \label{eq:L2_part_of_dx}
\int_{\O \times \R^3} \left| \nabla_xf-S_\O K \nabla_x f \right|^2 e^{2\a |v|^2}\,dxdv
\end{equation}
is bounded. In what follows, we shall show that the integral \eqref{eq:L2_part_of_dx} is in fact unbounded, which causes the contradiction.

Let $r_2 > 0$ and 
\begin{equation} \label{def:Dr_flat}
D_{r_1, r_2} := \{ (x, v) \in \O \times \R^3 \mid q(x, v) \in D_{r_1/4}, \tau(x, v) \leq 1, |v| < r_2 \}.
\end{equation}
In this region, we have $J (\nabla_X g) = 0$ and
\begin{align*}
S_{\O, x}Kf (x, v) =& (\nabla_x \tau(x, v)) e^{-\nu(v)\tau(x, v)} \int_{\Gamma^+_{q(x, v)}} k(v, v^*) f(q(x, v), v^*)\,dv^*\\ 
&+ (\nabla_x \tau(x, v)) e^{-\nu(v)\tau(x, v)} \int_{\Gamma^-_{q(x, v)}} k(v, v^*) e^{-\frac{1}{2} |v^*|^2}\,dv^*.
\end{align*}
We substitute the function $f$ in the first term of the right hand side by the integral equation \eqref{integral form} again to obtain
\begin{align*}
\int_{\Gamma^+_{q(x, v)}} k(v, v^*) f(q(x, v), v^*)\,dv^*
=& \int_{\Gamma^+_{q(x, v)}} k(v, v^*) Jg(q(q(x, v), v^*), v^*)\,dv^*\\
&+ \int_{\Gamma^+_{q(x, v)}} k(v, v^*) S_\O K f(q(q(x, v), v^*), v^*)\,dv^*.
\end{align*}
Here, since $q(q(x, v), v^*) \notin D_{r_1}$ for $(x, v) \in D_{r_1, r_2}$ and $v^* \in \Gamma^+_{q(x, v)}$, the first term in the right hand side is zero. On the other hand, by Lemma \ref{lem:small_second_convex} and \eqref{eq:uniform_est}, we have
\begin{align*}
\left| \int_{\Gamma^+_{q(x, v)}} k(v, v^*) S_\O K f(q(q(x, v), v^*), v^*)\,dv^* \right| \lesssim& \| f \|_{C_\alpha((\O \times \R^3) \cup \Gamma^\pm)} \diam(\O)\\
\lesssim& \diam(\O).
\end{align*}
Therefore, we can make contribution from the integral
\[
\int_{\Gamma^+_{q(x, v)}} k(v, v^*) f(q(x, v), v^*)\,dv^*
\]
arbitrary small by taking $\diam(\O)$ sufficiently small.

In what follows, we consider the case $\gamma = 1$, which corresponds to the hard-sphere model. In this case, the coefficient $\nu(v)$ and the integral kernel $k$ have the following explicit formulae:
\[
\nu(v) = 2^{-\frac{3}{2}} \left[ e^{-|v|^2} + \left( 2|v| + \frac{1}{|v|} \right) \int_0^{|v|} e^{- \eta^2}\,d\eta \right]
\]
and
\begin{align*}
k(v, v^*) =& 2^{-\frac{3}{2}} \pi^{-1} \left\{ 2|v^* - v|^{-1} \exp \left( -\frac{1}{4} \frac{(|v^*|^2 - |v|^2)^2}{|v^* - v|^2} -\frac{1}{4} |v^* - v|^2 \right) \right.\\ 
&\left. - |v^* - v| \exp \left( -\frac{1}{2}(|v^*|^2 + |v|^2) \right) \right\}.
\end{align*}

\begin{lemma} \label{lem:neg_int_flat}
There exist $\eta_0 > 0$ and $r_2 > 0$ such that 
\begin{equation} \label{ineq:nonvanish_flat}
\nu(v) e^{-\frac{1}{2}|v|^2} - \int_{\Gamma^-_{q(x, v)}} k(v, v^*) e^{-\frac{1}{2} |v^*|^2}\,dv^* > \eta_0
\end{equation}
for all $(x, v) \in D_{r_1, r_2}$.
\end{lemma}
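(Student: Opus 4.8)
The plan is to exploit the fact that on the flat disk the outward normal is constant, so that the left-hand side of \eqref{ineq:nonvanish_flat} depends on $v$ alone, and then to reduce it to a half-space gain integral that can be evaluated at $v=0$. First I would observe that for $(x,v)\in D_{r_1,r_2}$ we have $q(x,v)\in D_{r_1/4}$, which lies on the flat part of $\partial\O$, so $n(q(x,v))=(1,0,0)$ and hence $\Gamma^-_{q(x,v)}=\{v^*\in\R^3 : v^*_1<0\}$ is a fixed half-space independent of $(x,v)$. Consequently the quantity in \eqref{ineq:nonvanish_flat} is a function of $v$ alone; write
\[
F(v):=\nu(v)e^{-\frac12|v|^2}-\int_{\{v^*_1<0\}}k(v,v^*)e^{-\frac12|v^*|^2}\,dv^*.
\]
The key structural input is the collision-invariant identity $\nu(v)e^{-\frac12|v|^2}=\int_{\R^3}k(v,v^*)e^{-\frac12|v^*|^2}\,dv^*$, which expresses that $\sqrt{M}$ (here the Maxwellian is $M=e^{-|v|^2}$) lies in the kernel of the linearized collision operator; using it, $F(v)=\int_{\{v^*_1>0\}}k(v,v^*)e^{-\frac12|v^*|^2}\,dv^*$, the gain integral restricted to outgoing velocities.

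Next I would evaluate $F(0)$ explicitly. At $v=0$ the hard-sphere kernel collapses to $k(0,v^*)=2^{-\frac32}\pi^{-1}\bigl(2|v^*|^{-1}-|v^*|\bigr)e^{-\frac12|v^*|^2}$, which is radially symmetric in $v^*$; therefore the integral over $\{v^*_1>0\}$ is exactly one half of the integral over $\R^3$, giving
\[
F(0)=\tfrac12\int_{\R^3}k(0,v^*)e^{-\frac12|v^*|^2}\,dv^*=\tfrac12\,\nu(0).
\]
A direct computation of the elementary Gaussian integrals yields $\int_{\R^3}k(0,v^*)e^{-\frac12|v^*|^2}\,dv^*=2^{-1/2}$ and, from the explicit formula for $\nu$, $\nu(0)=2^{-1/2}$ (the apparent $1/|v|$ singularity cancels since $\int_0^{|v|}e^{-\eta^2}\,d\eta\sim|v|$ as $|v|\to0$). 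Hence $F(0)=2^{-3/2}>0$, and I would set $\eta_0:=2^{-5/2}$.

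Finally I would upgrade this positivity at the origin to a neighborhood by continuity: once $v\mapsto F(v)$ is known to be continuous at $v=0$, choosing $r_2>0$ so small that $F(v)>\eta_0$ whenever $|v|<r_2$ finishes the proof, since every $(x,v)\in D_{r_1,r_2}$ satisfies $|v|<r_2$. The only delicate point, and the \emph{main obstacle}, is the weak singularity of $k$ at $v^*=v$, whose location moves with $v$: by \eqref{AB'} (equivalently Corollary \ref{cor:est_bound_k} with $a=0$) the singular part of $k$ is bounded by $C|v-v^*|^{-1}E_\rho(v,v^*)$, which is only locally integrable. I would handle it by the standard splitting $\int_{\{v^*_1>0\}}=\int_{\{|v^*-v|<\varepsilon\}}+\int_{\{|v^*-v|\ge\varepsilon\}}$: on the first piece the integrand is bounded by $C|v-v^*|^{-1}+C'$, so its contribution is $O(\varepsilon^2)$ uniformly for $|v|\le1$ because $\int_{|w|<\varepsilon}|w|^{-1}\,dw=2\pi\varepsilon^2$; on the second piece the integrand is continuous in $v$ and dominated by the integrable function $C\varepsilon^{-1}e^{-\frac12|v^*|^2}+C(1+|v^*|)e^{-\frac12|v^*|^2}$, so it converges as $v\to0$ by dominated convergence. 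Letting $\varepsilon\to0$ gives continuity of $F$ at $0$, and the proof is complete.
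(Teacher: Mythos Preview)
Your argument is correct and follows the same overall strategy as the paper: observe that on the flat disk $\Gamma^-_{q(x,v)}$ is the fixed half-space $\{v^*_1<0\}$, reduce to the value at $v=0$ by continuity, and verify $F(0)=2^{-3/2}>0$. Where you diverge is in the evaluation of $F(0)$. The paper computes $\nu(0)=2^{-1/2}$ and $\int_{\{v^*_1<0\}}k(0,v^*)e^{-\frac12|v^*|^2}\,dv^*=2^{-3/2}$ separately by direct spherical integration, then subtracts. You instead invoke the collision-invariant identity $\nu(v)e^{-\frac12|v|^2}=\int_{\R^3}k(v,v^*)e^{-\frac12|v^*|^2}\,dv^*$ (i.e.\ $\sqrt{M}\in\ker L$) to rewrite $F(v)$ as the complementary half-space integral, and then use the radial symmetry of $k(0,\cdot)$ to get $F(0)=\tfrac12\nu(0)$ without any explicit half-space computation. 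This is a cleaner and more conceptual route that makes transparent why the answer is exactly half of $\nu(0)$; the paper's direct calculation is more self-contained but hides this structure. You also supply the $\varepsilon$-splitting argument for continuity of $F$ at $v=0$, which the paper simply asserts; your treatment of the moving singularity via the integrable majorant from \eqref{AB'} is the standard and correct way to justify it.
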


\begin{proof}
Notice that 
\[
\Gamma^-_{q(x, v)} = \{ v^* = (v_1, v_2, v_3) \in \R^3 \mid v^*_1 < 0 \}
\]
for $(x, v) \in D_{r_1, r_2}$.

By the continuity of the left hand side of \eqref{ineq:nonvanish_flat}, it suffices to show positivity at the limit $|v| \to 0$. In this limit, we have
\[
\lim_{|v| \to 0} \nu(v) e^{-\frac{1}{2} |v|^2} = \nu(0) = 2^{-\frac{3}{2}} (1 + 1) = 2^{-\frac{1}{2}}
\]
and
\begin{align*}
& \lim_{|v| \to 0} \int_{\Gamma^-_{q(x, v)}} k(v, v^*) e^{-\frac{1}{2}|v^*|^2}\,dv^*\\
=& \int_{\{ v^*_1 < 0 \}} k(0, v^*) e^{-\frac{1}{2}|v^*|^2}\,dv^*\\
=& \int_{\{ v^*_1 < 0 \}} 2^{-\frac{3}{2}} \pi^{-1} \left\{ 2 |v^*|^{-1} e^{-\frac{1}{2}|v^*|^2} - |v^*| e^{-\frac{1}{2} |v^*|^2 } \right\} e^{-\frac{1}{2}|v^*|^2}\,dv^*.
\end{align*}

We introduce the spherical coordinates; $v^* = (-\rho \cos \theta, \rho \sin \theta \cos \phi, \rho \sin \theta \sin \phi )$ for $0 < \theta < \pi/2$ and $-\pi < \phi < \pi$. Then, we have
\begin{align*}
&\int_{\Gamma^-_{q(x, \hat{v})}} 2^{-\frac{3}{2}} \pi^{-1} \left\{ 2 |v^*|^{-1} e^{-\frac{1}{2}|v^*|^2} - |v^*| e^{-\frac{1}{2} |v^*|^2 } \right\} e^{-\frac{1}{2}|v^*|^2}\,dv^*\\
=& 2^{-\frac{3}{2}} \pi^{-1} \int_0^\infty \left( \int_0^{\pi/2} \left( \int_0^{2\pi} (2 \rho^{-1} - \rho) e^{-\rho^2} \rho^2 \sin \theta\,d\varphi \right)\,d\theta \right)\,d\rho\\
=& 2^{-\frac{1}{2}} \int_0^\infty (2 \rho - \rho^3) e^{-\rho^2} \,d\rho.
\end{align*}
Since
\[
\int_0^\infty \rho^3 e^{-\rho^2} \,d\rho = - \frac{1}{2} \int_0^\infty \rho^2 \frac{d}{d\rho} e^{-\rho^2}\,d\rho = \int_0^\infty \rho e^{-\rho^2}\,d\rho,
\]
we have
\[
\int_0^\infty (2 \rho - \rho^3) e^{-\rho^2} \,d\rho = \int_0^\infty \rho e^{-\rho^2}\,d\rho = 2^{-1}.
\]
Therefore, we have
\[
\int_{\{ v^*_1 < 0 \}} k(0, v^*) e^{-\frac{1}{2}|v^*|^2}\,dv^* = 2^{-\frac{3}{2}}
\]
and
\[
\nu(0) - \int_{\{ v^*_1 < 0 \}} k(0, v^*) e^{-\frac{1}{2}|v^*|^2}\,dv^* = 2^{-\frac{1}{2}} - 2^{-\frac{3}{2}} = 2^{-\frac{3}{2}}.
\]
This completes the proof.
\end{proof}

We are ready to prove that the derivative $\nabla_x f$ does not belong to $L^2_\a(\O \times \R^3)$ if $r_1$, $r_2$ and $\diam(\O)$ are sufficiently small. Thanks to Lemma \ref{lem:neg_int_flat}, we have
\begin{align*}
\int_{\O \times \R^3} \left| \nabla_xf-S_\O K \nabla_x f \right|^2 e^{2\a |v|^2}\,dxdv \gtrsim \int_{D_{r_1, r_2}} |\nabla_x \tau(x, v)|^2\,dxdv.
\end{align*}
Here, we perform the same change of variable as \eqref{domain_to_boundary}. We notice that, because of the restriction $\tau(x, v) < 1$, the integral has a different form:
\begin{align*}
&\int_{D_{r_1, r_2}} |\nabla_x \tau(x, v)|^2\,dxdv\\ 
=& \int_{D_{r_1/4}} \int_{\{ v_1 < 0 \} \cap \{ |v| < r_2 \}} \int_0^{\min\{\tau(z, -v), 1\}} |\nabla_x \tau(z + tv, v)|^2\,dt N(z, v) |v|dv\,d\Sigma(z).
\end{align*}
From \eqref{tau_dx}, we have
\begin{align*}
\int_0^{\min\{\tau(z, -v), 1\}} |\nabla_x \tau(z + tv, v)|^2\,dt N(z, v) |v| = \frac{\min\{\tau(z, -v), 1\}}{N(z, v)|v|}
\end{align*}
We restrict ourselves to the case $|v| < r_1/2$. In this case, we have $\tau(z, -v) > 1$. Let $r_3 := \min \{r_1/2, r_2\}$. Then, we have
\begin{align*}
&\int_{D_{r_1/4}} \int_{\{ v_1 < 0 \} \cap \{ |v| < r_2 \}} \int_0^{\min\{\tau(z, -v), 1\}} |\nabla_x \tau(z + tv, v)|^2\,dt N(z, v) |v|dv\,d\Sigma(z)\\ 
\geq& \int_{D_{r_1/4}} \int_{\{ v_1 < 0 \} \cap \{ |v| < r_3 \}} \frac{1}{N(z, v)|v|}\,dv\,d\Sigma(z).
\end{align*}
Introducing the spherical coordinates to
 $v$ so that $\theta = 0$ corresponds to $(-1, 0, 0)$, we have
\begin{align*}
\int_{\{ v_1 < 0 \} \cap \{ |v| < r_3 \}} \frac{1}{N(z, v)|v|}\,dv =& \pi r_3^2 \int_0^{\pi/2} \frac{\sin \theta}{\cos \theta}\,d\theta, 
\end{align*}
which is divergent for all $z \in D_{r_1/4}$. Therefore the integral \eqref{eq:L2_part_of_dx} is not bounded, which leads to the contradiction. 

\subsection{A counterexample for $p=3$}

In this subsection, we give a proof of Lemma \ref{lem:optimality3}.


Let $\O$ be a ball centered at the origin with radius $r$. We pose the boundary data $g$ as \eqref{inbdry_optimal_convex_3}. Notice that $g \in L^p_\alpha(\Gamma^-) \cap C_\alpha(\Gamma^-)$ for all $1 \leq p < \infty$ and $0 \leq \alpha < 1/2$. Thus, by Proposition \ref{prop:bound_J_alpha} and Proposition \ref{prop:bound_J_Calpha}, we have $Jg \in L^p_\alpha(\O \times \R^3) \cap C_\alpha((\O \times \R^3) \cup \Gamma^\pm)$ for all $1 \leq p < \infty$ and $0 \leq \alpha < (1 - \rho)/2$. Therefore, by Lemma \ref{estimate on Lp} and Lemma \ref{lem:small_second_convex}, the integral equation \eqref{integral form} has the unique solution $f$ in $L^p_\alpha(\O \times \R^3) \cap C_\alpha((\O \times \R^3) \cup \Gamma^\pm)$. Especially, we have $f \in L^3_\a(\O \times \R^3)$ for all $0 \leq \a < (1 - \rho)/2$.

Corresponding to Lemma \ref{lem:tau_dx_flat}, we have the following estimate.

\begin{lemma} \label{lem:tau_dx_ball}
Let $\Omega$ be a ball with its center the origin and its radius $r$. Also, let $p \geq 1$. Then, the integral
\[
\int_{\O \times \R^3} |\nabla_x \tau(x, v)|^p |v|^b e^{-a |v|^2}\,dxdv
\]
converges for all $a > 0$ and $b \geq 0$ if and only if $p < 3$.
\end{lemma}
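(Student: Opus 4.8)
The plan is to mirror the proof of Lemma \ref{lem:tau_dx_flat}, the only essential change being that the positive curvature of the sphere supplies an extra factor of $N(z,v)$ that shifts the critical exponent from $2$ to $3$. First I would apply the change of variables \eqref{domain_to_boundary} together with the identity \eqref{tau_dx}, which shows that $|\nabla_x\tau(z+tv,v)| = |\nabla_x\tau(z,v)| = (N(z,v)|v|)^{-1}$ is constant along each incoming ray. Carrying out the $t$-integration exactly as in the flat case reduces the quantity to the boundary integral
\[
\int_{\p\O}\int_{\Gamma^-_z}\frac{\tau(z,-v)}{N(z,v)^{p-1}\,|v|^{p-b-1}}\,e^{-a p|v|^2}\,dv\,d\Sigma(z).
\]

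The new ingredient is an exact chord-length formula. For the ball $\O=\{|x|<r\}$ one has $n(z)=z/r$, and solving $|z+sv|^2=r^2$ gives the exit time $\tau(z,-v) = -2(z\cdot v)/|v|^2$; since $N(z,v) = -(z\cdot v)/(r|v|)$ on $\Gamma^-$, this simplifies to the identity $\tau(z,-v) = 2r\,N(z,v)/|v|$. This is the equality version of the curvature bound in Lemma \ref{circle lemma}, and it is precisely what makes the estimate two-sided. Substituting it converts the factor $N(z,v)^{1-p}$ into $N(z,v)^{2-p}$, so the quantity becomes, up to a positive constant,
\[
\int_{\p\O}\int_{\Gamma^-_z}\frac{1}{N(z,v)^{p-2}\,|v|^{p-b}}\,e^{-ap|v|^2}\,dv\,d\Sigma(z).
\]

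Finally I would fix $z$, use rotational symmetry, and pass to spherical coordinates for $v$ with $\theta=0$ along $-n(z)$, so that $N(z,v)=\cos\theta$ and the inner integral factorizes as $2\pi\int_0^1 t^{2-p}\,dt\int_0^\infty\rho^{2+b-p}e^{-ap\rho^2}\,d\rho$. The angular integral converges exactly when $2-p>-1$, i.e.\ $p<3$; the radial integral converges at the origin when $2+b-p>-1$ (automatic once $p<3$ and $b\ge0$) and always at infinity for $a>0$. Because the chord formula is an exact identity rather than a one-sided bound, convergence and divergence are read off from the same sign-definite integral: it is finite precisely when $p<3$ and equals $+\infty$ for $p\ge3$. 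Unlike the flat case, no separate lower bound on a subregion is needed.

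The step I expect to require the most care is establishing the chord-length identity and confirming that it produces the exponent $N^{2-p}$ rather than $N^{1-p}$; this single extra factor of $N$ is exactly the geometric effect of positive Gaussian curvature that raises the threshold from $p<2$ to $p<3$, so getting the power bookkeeping right is the crux of the argument.
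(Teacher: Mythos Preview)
Your proposal is correct and follows essentially the same argument as the paper: both reduce to the boundary integral via \eqref{domain_to_boundary} and \eqref{tau_dx}, invoke the exact chord-length identity $\tau(z,-v)=2rN(z,v)/|v|$ for the ball (the paper records it as \eqref{eq:tau_ball_ex}), substitute to obtain the $N(z,v)^{2-p}$ factor, and then factorize in spherical coordinates. Your observation that the identity makes the bound two-sided, obviating the separate lower-bound step from the flat case, is exactly what the paper does implicitly.
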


\begin{proof}
As we computed in the proof of Lemma \ref{lem:tau_dx_flat}, we have
\begin{align*}
&\int_{\O \times \R^3} |\nabla_x \tau(x, v)|^p |v|^b e^{-a p |v|^2}\,dxdv\\ 
=&\int_{\partial \O} \int_{\Gamma^-_z} \int_0^{\tau(z, -v)} |\nabla_x \tau(z + tv, v)|^p |v|^b e^{-a p |v|^2}\,dt N(z, v) |v|\,dv d\Sigma(z)\\ 
=& \int_{\partial \O} \int_{\Gamma^-_z} \frac{\tau(z, -v)}{N(z, v)^{p-1} |v|^{p-b-1}} e^{-a p |v|^2}\,dv d\Sigma(z).
\end{align*}

If the domain $\O$ is the ball with its center the origin and its radius $r$, the function $\tau(x, v)$ is explicitly described as 
\begin{equation} \label{tau_ball}
\tau(x, v) = \frac{1}{|v|} \left( x \cdot \hat{v} + \sqrt{r^2 - |x|^2 + (x \cdot \hat{v})^2} \right).
\end{equation}
From \eqref{tau_ball}, we also see that
\begin{equation} \label{eq:tau_ball_ex}
\tau(z, -v) = \frac{2 |\hat{v} \cdot z|}{|v|} = \frac{2r}{|v|} N(z, v), \quad (z, v) \in \Gamma^-.
\end{equation}
Therefore, we have
\[
\int_{\O \times \R^3} |\nabla_x \tau(x, v)|^p |v|^b e^{-a p |v|^2}\,dxdv = 2r \int_{\p \O} \int_{\Gamma^-_z} \frac{1}{N(z, v)^{p-2} |v|^{p-b}} e^{-a p |v|^2}\,dv d\Sigma(z). 
\]

We fix the point $z \in \p \O$ and compute the inner integral with respect to $v$. We introduce the spherical coordinate for $v$ so that the direction $\theta = 0$ corresponds to $-n(z)$. Then, we have
\begin{align*}
\int_{\Gamma^-_z} \frac{1}{N(z, v)^{p-2} |v|^{p-b}} e^{-a p |v|^2}\,dv =& 2\pi \int_0^{\frac{\pi}{2}} \frac{\sin \theta}{\cos^{p-2} \theta}\,d\theta \int_0^\infty \frac{1}{\rho^{p-b-2}} e^{-a p |v|^2}\,d\rho\\
=& 2\pi \int_0^1 t^{2-p}\,dt \int_0^\infty\rho^{2+b-p} e^{-a p \rho^2}\,d\rho.
\end{align*}
The right hand side converges for all $a > 0$ and $b \geq 0$ if and only if $p < 3$ under the restriction $p \geq 1$. This completes the proof since the boundary $\p \O$ is bounded. 
\end{proof}

Let $g$ be the function on $\Gamma^-$ defined in \eqref{inbdry_optimal_convex_3}. Then, by Lemma \ref{lem:tau_dx_ball} and estimates \eqref{Jg_dx_1}-\eqref{Jg_dv_4} with $\varphi_1$ replaced by $\varphi_2$, we see that $\nabla_x Jg, \nabla_v Jg \in L^p_\a(\O \times \R^3)$ for all $0 \leq \a < (1 - \rho)/2$ and $1 \leq p < 3$, and the corresponding solution $f$ to the boundary value problem \eqref{SLBE}-\eqref{inbdry} also belongs to $L^p_\a(\O \times \R^3)$.

In what follows, we show that actually the solution $f$ does not belong to $W^{1, 3}_\a(\O \times \R^3)$ for all $0 \leq \a < (1 - \rho)/2$. To this aim, we assume that the solution belongs to $W^{1, 3}_\a(\O \times \R^3)$, and derive a 
contradiction. We formally differentiate the equation \eqref{integral form} with respect to $x$ to obtain
\begin{equation} \label{eq:dx_ball}
\begin{split}
\nabla_x f(x, v) =& -\nu(v) (\nabla_x \tau(x, v)) Jg(x, v) + (\nabla_x q(x, v)) J (\nabla_X g)(x, v)\\
&+ S_{\O, x} K f(x, v) + S_\O K (\nabla_x f) (x, v).
\end{split}
\end{equation}
By assumption, we see that $S_\O K (\nabla_x f) \in L^3_\a(\O \times \R^3)$ and therefore the integral
\begin{equation} \label{eq:L3_part_of_dx}
\int_{\O \times \R^3} | \nabla_x f - S_\O K \nabla_x f |^3 e^{3 \a |v|^2}\,dxdv
\end{equation}
is bounded. In what follows, we shall prove that the integral \eqref{eq:L3_part_of_dx} is not bounded, which leads to a contradiction.

In what follows, we show in an example that the left term does not belong to $L^3_\a(\O \times \R^3)$, which causes the contradiction. 

Let $r_0 > 0$ and 
\begin{equation} \label{def:Dtr}
\tilde{D}_{\theta_1, r_0} := \{ (x, v) \in \O \times \R^3 \mid q(x, v) \in \partial \O_{\theta_1}, \tau(x, v) \leq 1, |v| < r_0 \}.
\end{equation}
In this region, we have $J (\nabla_X g) = 0$ and 
\begin{align*}
S_{\O, x}Kf (x, v) =& (\nabla_x \tau(x, v)) e^{-\nu(v)\tau(x, v)} \int_{\Gamma^+_{q(x, v)}} k(v, v^*) f(q(x, v), v^*)\,dv^*\\ 
&+ (\nabla_x \tau(x, v)) e^{-\nu(v)\tau(x, v)} \int_{\Gamma^-_{q(x, v)}} k(v, v^*) e^{-\frac{1}{2} |v^*|^2}\,dv^*.
\end{align*}
We substitute the function $f$ in the first term of the right hand side by the integral equation \eqref{integral form} again to obtain
\begin{align*}
\int_{\Gamma^+_{q(x, v)}} k(v, v^*) f(q(x, v), v^*)\,dv^* =& \int_{\Gamma^+_{q(x, v)}} k(v, v^*) Jg(q(q(x, v), v^*), v^*)\,dv^*\\
&+ \int_{\Gamma^+_{q(x, v)}} k(v, v^*) S_\O K f(q(q(x, v), v^*), v^*)\,dv^*.
\end{align*}
Here, from a geometrical observation, we see that $Jg(q(q(x, v), v^*), v^*) \neq 0$ only if $(\pi - \theta_1 - \theta_2)/2 < \theta_{v^*} < \pi/2 + \theta_2$, where $\theta_{v^*}$ is the polar angle of $v^*$. Thus, for any $\epsilon > 0$, we can choose small enough $\theta_2>0$ such that
\[
\left| \int_{\Gamma^+_{q(x, v)}} k(v, v^*) Jg(q(q(x, v), v^*), v^*)\,dv^* \right| < \epsilon
\]
for all $(x, v) \in D_{\theta_1}$. On the other hand, by Lemma \ref{lem:small_second_convex} and \eqref{eq:uniform_est}, we have
\[
\left| \int_{\Gamma^+_{q(x, v)}} k(v, v^*) S_\O K f(q(q(x, v), v^*), v^*)\,dv^* \right| \lesssim \| f \|_{C_\alpha((\O \times \R^3) \cup \Gamma^\pm)} r
\lesssim r.
\]
Therefore, we can make a contribution from the integral
\[
\int_{\Gamma^+_{q(x, v)}} k(v, v^*) f(q(x, v), v^*)\,dv^*
\]
arbitrary small by taking $r$ first then $\theta_2$ sufficiently small.

In what follows, we consider the case $\gamma = 1$, which corresponds to the hard-sphere model. We show the following lemma, which corresponds to Lemma \ref{lem:neg_int_flat}.

\begin{lemma} \label{lem:neg_int_ball}
There exist $\eta_0 > 0$ and $r_0 > 0$ such that 
\begin{equation} \label{ineq:nonvanish_ball}
\nu(v) e^{-\frac{1}{2}|v|^2} - \int_{\Gamma^-_{q(x, v)}} k(v, v^*) e^{-\frac{1}{2} |v^*|^2}\,dv^* > \eta_0
\end{equation}
for all $\tilde{D}_{\theta_1, r_0}$.
\end{lemma}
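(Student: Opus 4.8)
The plan is to mirror the proof of Lemma \ref{lem:neg_int_flat}, the one genuinely new difficulty being that the integration domain $\Gamma^-_{q(x,v)}$ is the half-space $\{ v^* \in \R^3 \mid n(q(x,v)) \cdot v^* < 0 \}$ whose normal now varies with $(x,v)$, whereas in the flat case it was fixed. Since $\O$ is the ball of radius $r$ centered at the origin we have $n(q(x,v)) = q(x,v)/r$, so the left-hand side of \eqref{ineq:nonvanish_ball} depends on $(x,v)$ only through $v$ and the unit vector $n := n(q(x,v)) \in S^2$. Accordingly I set
\[
\Phi(v, n) := \nu(v) e^{-\frac12 |v|^2} - \int_{\{ n \cdot v^* < 0 \}} k(v, v^*) e^{-\frac12 |v^*|^2}\,dv^*,
\]
defined for $v \neq 0$ and $n \in S^2$, and it suffices to produce $\eta_0, r_0 > 0$ with $\Phi(v, n) > \eta_0$ for all $n \in S^2$ and $0 < |v| < r_0$, since on $\tilde{D}_{\theta_1, r_0}$ the quantity in \eqref{ineq:nonvanish_ball} equals $\Phi(v, n(q(x,v)))$ with $|v| < r_0$.

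The key point is the uniformity in $n$ as $|v| \to 0$, which I would dispose of using the rotational invariance of the hard-sphere data. Because the explicit kernel $k(v, v^*)$ depends on $v, v^*$ only through $|v|$, $|v^*|$ and $|v - v^*|$, one has $k(Rv, Rv^*) = k(v, v^*)$ for every rotation $R$, and $e^{-|v^*|^2/2}$ is rotation invariant as well. Choosing for each $n$ a rotation $R(n)$ with $R(n) e_3 = n$ and substituting $v^* = R(n) w$ turns the half-space $\{n\cdot v^*<0\}$ into $\{ w_3 < 0 \}$ and yields
\[
\Phi(v, n) = \nu(v) e^{-\frac12 |v|^2} - G_0(R(n)^{-1} v), \qquad G_0(u) := \int_{\{ w_3 < 0 \}} k(u, w) e^{-\frac12 |w|^2}\,dw.
\]
Since $|R(n)^{-1} v| = |v|$, the $n$-dependence is confined to a rotation of the argument of $G_0$, so that $\sup_{n \in S^2} |\Phi(v,n) - \Phi(0,n)| \le \sup_{|u| = |v|} |G_0(u) - G_0(0)|$; once I show $G_0$ is continuous at $u = 0$ this bound tends to $0$ as $|v| \to 0$, giving the convergence uniformly in $n$.

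Continuity of $G_0$ at the origin follows from the kernel bound in Corollary \ref{cor:est_bound_k}, which supplies an integrable dominating function for $k(u, \cdot)\, e^{-|\cdot|^2/2}$ uniform for $|u|$ bounded (the singularity $|u-w|^{-1}$ being locally integrable in $\R^3$), together with dominated convergence, so that $G_0(0) = \int_{\{w_3 < 0\}} k(0, w) e^{-|w|^2/2}\,dw$. The integrand $k(0, w) e^{-|w|^2/2}$ is radial in $w$, hence this half-space integral equals exactly the value computed in Lemma \ref{lem:neg_int_flat}, namely $2^{-3/2}$; combined with $\nu(0) = 2^{-1/2}$ this gives $\Phi(0, n) = 2^{-1/2} - 2^{-3/2} = 2^{-3/2}$ for every $n \in S^2$.

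Finally, $\Phi$ extends continuously to $\overline{B_{r_0}(0)} \times S^2$ with $\Phi(0, \cdot) \equiv 2^{-3/2}$; by the uniform convergence just established and the continuity of $v \mapsto \nu(v) e^{-|v|^2/2}$, choosing $r_0$ small enough makes $|\Phi(v,n) - 2^{-3/2}| < 2^{-5/2}$ throughout, whence $\Phi(v,n) > 2^{-5/2}$, and I take $\eta_0 := 2^{-5/2}$. The main obstacle, the varying orientation of the half-space, is thus reduced by rotation invariance to the single scalar computation already carried out in the flat case; the only step requiring care is the domination needed to pass $G_0(u) \to G_0(0)$, which the kernel estimates of Section \ref{sec:pre} provide.
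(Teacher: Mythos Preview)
Your proof is correct and follows essentially the same approach as the paper: reduce to the limit $|v|\to 0$ by continuity, then compute the value there via spherical coordinates aligned with the normal $n(q(x,v))$, obtaining $\nu(0)-2^{-3/2}=2^{-3/2}$ exactly as in Lemma \ref{lem:neg_int_flat}. Your treatment is in fact slightly more careful than the paper's: you make the dependence on the moving normal $n$ explicit via the function $\Phi(v,n)$ and use the rotational invariance of the hard-sphere kernel to reduce uniformly to a fixed half-space, whereas the paper simply aligns spherical coordinates with $-n(q(x,\hat v))$ and remarks that the resulting limit is independent of $\hat v$; you also flag the dominated-convergence step needed for continuity of $G_0$ at the origin, which the paper invokes without further comment.
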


\begin{proof}
By the continuity of the left hand side of \eqref{ineq:nonvanish_ball}, it suffices to show positivity at the limit $|v| \to 0$. In this limit, we have
\[
\lim_{|v| \to 0} \nu(v) e^{-\frac{1}{2} |v|^2} = \nu(0) = 2^{-\frac{3}{2}} (1 + 1) = 2^{-\frac{1}{2}}
\]
and
\begin{align*}
& \lim_{|v| \to 0} \int_{\Gamma^-_{q(x, v)}} k(v, v^*) e^{-\frac{1}{2}|v^*|^2}\,dv^*\\
=& \int_{\Gamma^-_{q(x, \hat{v})}} k(0, v^*) e^{-\frac{1}{2}|v^*|^2}\,dv^*\\
=& \int_{\Gamma^-_{q(x, \hat{v})}} 2^{-\frac{3}{2}} \pi^{-1} \left\{ 2 |v^*|^{-1} e^{-\frac{1}{2}|v^*|^2} - |v^*| e^{-\frac{1}{2} |v^*|^2 } \right\} e^{-\frac{1}{2}|v^*|^2}\,dv^*
\end{align*}
for a unit vector $\hat{v}$ such that $q(x, \hat{v}) \in \p \O_{\theta_1}$. We remark that the limit does not look appropriate since it formally depends on the choice of the unit vector $\hat{v}$. In what follows, we will see that the limit is independent of it.

We introduce the spherical coordinates for $v^*$ so that the vector corresponding to $\theta = 0$ direct to $-n(q(x, \hat{v}))$. Then, as we computed in the proof of Lemma \ref{lem:neg_int_flat}, we have
\begin{align*}
&\int_{\Gamma^-_{q(x, \hat{v})}} 2^{-\frac{3}{2}} \pi^{-1} \left\{ 2 |v^*|^{-1} e^{-\frac{1}{2}|v^*|^2} - |v^*| e^{-\frac{1}{2} |v^*|^2 } \right\} e^{-\frac{1}{2}|v^*|^2}\,dv^*\\
=& 2^{-\frac{3}{2}} \pi^{-1} \int_0^\infty \left( \int_0^{\pi/2} \left( \int_0^{2\pi} (2 \rho^{-1} - \rho) e^{-\rho^2} \rho^2 \sin \theta\,d\varphi \right)\,d\theta \right)\,d\rho\\
=& 2^{-\frac{1}{2}} \int_0^\infty (2 \rho - \rho^3) e^{-\rho^2} \,d\rho\\
=& 2^{-\frac{3}{2}}.
\end{align*}
Therefore, we have
\[
\nu(0) - \int_{\Gamma^-_{q(x, \hat{v})}} k(0, v^*) e^{-\frac{1}{2}|v^*|^2}\,dv^* = 2^{-\frac{1}{2}} - 2^{-\frac{3}{2}} = 2^{-\frac{3}{2}}. 
\]
This completes the proof.
\end{proof}

For sufficiently small $r, \theta_2$ and $r_0$, thanks to Lemma \ref{lem:neg_int_ball}, we have
\begin{align*}
\int_{\O \times \R^3} | \nabla_x f - S_\O K \nabla_x f |^3 e^{3 \a |v|^2}\,dxdv \gtrsim \int_{\tilde{D}_{\theta_1, r_0}} |\nabla_x \tau(x, v)|^3\,dxdv.
\end{align*}
Here, we perform the same change of variable as \eqref{domain_to_boundary}. We notice again that, because of the restriction $\tau(x, v) < 1$, the integral has a different form:
\begin{align*}
&\int_{\tilde{D}_{\theta_1, r_0}} |\nabla_x \tau(x, v)|^3\,dxdv\\ 
=& \int_{\p \O_{\theta_1}} \int_{\Gamma^-_z \cap \{|v| < r_0\}} \int_0^{\min\{\tau(z, -v), 1\}} |\nabla \tau(z + tv, v)|^3\,dt\, N(z, v) |v|\,dv d\Sigma(z)\\
=& \int_{\p \O_{\theta_1}} \int_{\Gamma^-_z \cap \{|v| < r_0\}} \frac{\min\{\tau(z, -v), 1\}}{N(z, v)^2 |v|^2}\,dv d\Sigma(z).
\end{align*}
We consider the case $\min\{\tau(z, -v), 1\} = \tau(z, -v)$. In this case, from \eqref{eq:tau_ball_ex}, we have $2r N(z,v) \leq |v|$. Thus, we have
\begin{align*}
&\int_{\p \O_{\theta_1}} \int_{\Gamma^-_z \cap \{|v| < r_0\}} \frac{\min\{\tau(z, -v), 1\}}{N(z, v)^2 |v|^2}\,dv d\Sigma(z)\\
\geq& 2r \int_{\p \O_{\theta_1}} \int_{\Gamma^-_z \cap \{ 2r N(z,v) \leq |v| < r_0 \}} \frac{1}{N(z, v) |v|^3}\,dv d\Sigma(z).
\end{align*}

We first perform integration with respect to $v$ variable. We introduce the spherical coordinate system so that $\theta = 0$ direct to the $n(z)$ direction. Without loss of generality, we may assume $r_0 < 2r$. Due to the restriction $2rN(z, v) < r_0$, we have $0 < \cos \theta < r_0/2r$. Introducing the new variable $t = \cos \theta$, we have
\begin{align*}
\int_{\Gamma^-_z \cap \{ 2r N(z,v) \leq |v| < r_0 \}} \frac{1}{N(z, v) |v|^3}\,dv =& 2 \pi \int_0^{r_0/2r} \int_{2rt}^{r_0} \frac{1}{\rho^3 t} \rho^2 \,d\rho dt\\
=& 2 \pi \int_0^{r_0/2r} \frac{\log(r_0) - \log(2rt)}{t}\,dt.
\end{align*}
Since $\log(r_0) - \log(2rt) \to \infty$ as $t \downarrow 0$, the integral in the right hand side diverges to infinity, which implies that the integral \eqref{eq:L3_part_of_dx} is not bounded. This is the contradiction.

\section*{Acknowledgement}

 I. Chen is supported in part by NSTC with the grant number 108-2628-M-002-006-MY4, 111-2918-I-002-002-, and 112-2115-M-002-009-MY3. C. Hsia is supported in part by NSTC with the grant number 109-2115-M-002-013-MY3. D. Kawagoe is supported in part by JSPS KAKENHI grant number 20K14344. D. Kawagoe and J. Su is supported in part by the National Taiwan University-Kyoto University Joint Funding.

\end{document}